\newtheorem{theorem-intro}{Theorem}
\newtheorem{theorem}{Theorem}[section]
\newtheorem{lemma}[theorem]{Lemma}
\newtheorem{proposition}[theorem]{Proposition}
\newtheorem{corollary}[theorem]{Corollary}
\newtheorem*{questions}{Questions}
\theoremstyle{definition}
\newtheorem{definition}[theorem]{Definition}
\newtheorem{remark}[theorem]{Remark}
\newtheorem*{example}{Example}
\def\pf{\begin{proof}}
\def\epf{\end{proof}}
\newcommand{\Na}{\mathbb{N}}
\newcommand{\Z}{\mathbb{Z}}
\newcommand{\Q}{\mathbb{Q}}
\newcommand{\Co}{\mathbb{C}}
\newcommand{\p}{\mathfrak{p}}
\newcommand{\q}{\mathfrak{q}}
\newcommand{\Ow}{\mathcal{O}}
\newcommand\Ker{\operatorname{Ker}}
\newcommand\Ima{\operatorname{Im}}
\newcommand\Rep{\operatorname{Rep}}
\newcommand{\op}{\frac{1}{\pi}}
\newcommand{\ot}{\otimes}
\newcommand{\Oint}{\mathcal{O}}
\newcommand{\Bas}{\mathcal{B}}
\definecolor{azul}{RGB}{0, 47, 103}
\definecolor{rojo}{rgb}{1,0,0}
\DeclareMathAlphabet{\mathpzc}{OT1}{pzc}{m}{it}
\begin{document}

\title[Orders of Nikshych's Hopf algebra]{Orders of Nikshych's Hopf algebra}

\author[J. Cuadra and E. Meir]{Juan Cuadra and Ehud Meir}

\address{J. Cuadra: Universidad de Almer\'\i a, Dpto. Matem\'aticas. E04120 Almer\'\i a, Spain}
\email{jcdiaz@ual.es}

\address{E. Meir: Department of Mathematics, University of Hamburg, Bundesstrasse 55, 20146 Hamburg, Germany}
\email{meirehud@gmail.com}

\begin{abstract}
Let $p$ be an odd prime number and $K$ a number field having a primitive $p$-th root of unity $\zeta_p$.
We prove that Nikshych's non group-theoretical Hopf algebra $H_p$, which is defined over $\Q(\zeta_p)$, admits a Hopf order over the ring of integers $\Oint_K$ if and only if there is an ideal $I$ of $\Oint_K$ such that $I^{2(p-1)} = (p)$.
This condition does not hold in a cyclotomic field. Hence this gives an example of a semisimple Hopf algebra over a number field not admitting a Hopf order over any cyclotomic ring of integers.
Moreover, we show that, when a Hopf order over $\Oint_K$ exists, it is unique and we describe it explicitly.
\end{abstract}

\maketitle

\section*{Introduction}
\setcounter{equation}{0}

Many results in the Representation Theory of Finite Groups exploit the fact that the complex group algebra $\Co G$ of a finite group $G$ is defined over the integers or, more generally, over the ring of integers $\Oint_K$ of a number field $K$. In other terms, $\Oint_K G$ is an algebra order of $\Co G$; indeed a Hopf (algebra) order. A prominent role is played by cyclotomic fields: for example, the celebrated Brauer's splitting field theorem states that any irreducible representation of $KG$ can be realized in $K(\omega)$, with $\omega$ a primitive root of unity of order equals $\textrm{exp}\, G$ (see \cite[Theorem 15.16, Corollary 15.18]{CR}). \par \smallskip

Kaplansky's sixth conjecture, still unsolved, is a generalization of Frobenius Theorem for groups. It asserts that in a complex semisimple Hopf algebra $H$ the dimension of every irreducible representation of $H$ divides the dimension of $H$. Larson gave a positive answer in \cite{L2} if $H$ admits a Hopf order over a number ring. Motivated by this result, in \cite{CM} we addressed the question as to whether any complex semisimple Hopf algebra admits a Hopf order over a number ring. In the dimensions less than $36$ in which the classification is complete ($24$ and $32$ are still open) it turns out that all semisimple Hopf algebras are defined over cyclotomic rings of integers, see \cite[Subsection 2.4]{CM} for an account. However, we exhibited in \cite{CM} an example in dimension $36$ that does not admit a Hopf order over any number ring, although it satisfies the conjecture. \par \smallskip

As a continuation of our previous work we investigate in this paper the problem of definability of semisimple Hopf algebras over cyclotomic ring of integers. Let $H$ be a semisimple Hopf algebra over a number field $K$ and suppose that $H$ has a Hopf order over some number ring.
Does $H$ admit a Hopf order over a cyclotomic ring of integers contained in $K$? Our main result gives a negative answer for the family of non group-theoretical semisimple Hopf algebras $\{H_p\}$, with $p$ an odd prime, constructed by Nikshych (see \cite{N1}). The dimension of $H_p$ is $4p^2$ (so in particular the dimension of $H_3$ is 36). These Hopf algebras were not constructed explicitly but through a tensor category and a fiber functor. The representation category $\Rep(H_p)$ was obtained by equivariantization by $C_2$ from $\Rep(A_p)$, with $A_p$ the Hopf algebra studied by Masuoka in \cite{M1}. Using Tannaka reconstruction, in Section \ref{Nikshych} we describe $H_p$ completely as follows:

\begin{theorem-intro}
Let $\zeta_p \in \Co$ be a primitive $p$-th root of unity. The Hopf algebra $H_p$ is generated, as an algebra over $\Co$, by the elements $e_0,e_1,u_a,u_b,v_a,v_b$ and $g$ subject to the following relations:
$$\begin{array}{lllll}
e_0+e_1=1,       & \hspace{3mm} e_0e_1=e_1e_0=0,       &                  &                             & \vspace{3pt}  \\
u_a^p=u_b^p=e_0, & \hspace{3mm} e_0u_a=u_a,         &  e_0u_b=u_b,  & \hspace{3mm} u_au_b=u_bu_a, & \vspace{3pt} \\
v_a^p=v_b^p=e_1, & \hspace{3mm} e_1v_a=v_a, &  e_1v_b=v_b,  & \hspace{3mm} v_av_b = \zeta_p v_bv_a, & \vspace{3pt} \\
g^2=1,           & \hspace{3mm} gu_a= u_bg,            &  gu_b= u_ag,     & \hspace{3mm} gv_a = v_ag,   & \hspace{3mm} gv_b = v_bg. \\
\end{array}$$
The comultiplication, counit, and antipode of $H_p$ is given by the following formulas:
$$\begin{array}{lll}
\Delta(u_a) = u_a\otimes u_a + v_a\otimes v_a,       & \hspace{5mm} \varepsilon(u_a)=1, & \hspace{5mm} S(u_a) = u_a^{p-1}, \vspace{3pt} \\
\Delta(u_b) = u_b\otimes u_b + v_b\otimes v_b^{p-1}, & \hspace{5mm} \varepsilon(u_b)=1, & \hspace{5mm} S(u_b)=u_b^{p-1}, \vspace{3pt} \\
\Delta(v_a) = u_a \otimes v_a + v_a\otimes u_a,      & \hspace{5mm} \varepsilon(v_a)=0, & \hspace{5mm} S(v_a) = v_a^{p-1}, \vspace{3pt} \\
\Delta(v_b) = u_b\otimes v_b + v_b\otimes u_b^{p-1}, & \hspace{5mm} \varepsilon(v_b)=0, & \hspace{5mm} S(v_b)=v_b.
\end{array}$$
The comultiplication of $g$ is given by
$$\begin{array}{ll}
\Delta(g)  = & {\displaystyle \frac{1}{p^2}\sum_{i,j,k,l} \zeta^{kj-il}_p gu_a^i u_b^j \otimes gu_a^k u_b^l + \frac{1}{p}\sum_{k,l} \zeta^{-(k+l)k}_p gu_a^k u_b^l \otimes gv_a^{k+l} v_b^{k+l}} \vspace{3pt} \\
 & {\displaystyle + \frac{1}{p} \sum_{k,l} \zeta^{k(k+l)}_p gv_a^{k+l}v_b^{(p-1)(k+l)} \otimes gu_a^k u_b^l + \frac{1}{p} \sum_{k,l} gv_a^k v_b^l \otimes gv_a^{(p-1)l}v_b^k.}
\end{array}$$
The counit and antipode of $g$ are $\varepsilon(g)=1$ and $S(g)=g$.
\end{theorem-intro}

In Section 4 we delve into the structure of $H_p$: we describe its irreducible (co)re\-pre\-sen\-ta\-tions and attached (co)characters, its Hopf automorphisms, and we show that $H_p$ is self-dual. \par \smallskip

The set
$$\Bas:=\{u_a^iu_b^j\}\cup\{v_a^iv_b^j\}\cup\{gu_a^iu_b^j\}\cup\{gv_a^iv_b^j\}$$
is a basis of $H_p$. All structure constants of $H_p$ in this basis belong to $\Q(\zeta_p)$. Hence $H_p$ is defined over $\Q(\zeta_p)$. Our main result states:

\begin{theorem-intro}\label{thintro2}
Let $K$ be a number field containing a primitive $p$-th root of unity $\zeta_p$. Consider $H_p$ as defined over $K$. Then, $H_p$ admits a Hopf order over $\Oint_K$, which must be unique, if and only if there is an ideal $I$ of $\Oint_K$ such that $I^{2(p-1)} = (p)$. In particular, $K$ can not be a cyclotomic field (nor an abelian extension of $\Q$) if a Hopf order exists.
\end{theorem-intro}

This theorem implies that Nikshych's Hopf algebras behave rather differently than group algebras.
Firstly, all group algebras are already defined over $\Z$. Secondly, the number of Hopf orders of a group algebra over $\Oint_K$
depends on $K$, and in some cases it is not bounded (see for example the classification of orders of the group algebras of the cyclic groups of prime orders in Section \ref{ordercp}). \par \smallskip

The main result is contained in Section 5. We outline the strategy to prove it and construct the Hopf order.
The element $h:=u_a+v_a$ is a group-like element of $H_p$ and generate a Hopf subalgebra isomorphic to $KC_p$.
If $X$ is a Hopf order of $H_p$ over $\Oint_K$, then $X \cap KC_p$ is a Hopf order of $KC_p$.
The Hopf orders of the latter are known by the works of Greither, Larson, Tate and Oort (we review their description in Section \ref{ordercp}, after the preliminaries). They are given by ideals $I$ of $\Oint_K$ containing $\zeta_p-1$, see Formula \ref{LarHI}. Denoting by $H(I)$ the corresponding Hopf order, the $\Oint_K$-submodule of integrals of $H(I)$ is $\frac{1}{p}I^{p-1}\sum_i h^i$. This determines uniquely the Hopf orders of $KC_p$.
On the other hand, any Hopf order must contain certain elements arising from characters and cocharacters.
The proof of the main result is based on the interaction between the order $X$ of $H_p$ and the order $X\cap KC_p$ of $KC_p$. We exhibit certain elements which must be in $X$. We then conclude that necessarily $\frac{1}{\sqrt{p}} \sum_{i} h^i \in X \cap KC_p$,
and by the classification of orders mentioned above, we conclude that some more elements must lie in $X\cap KC_p$ and therefore in $X$.
We then show that these elements generate an order of $H_p$, which thus must be a minimal order.
We then use the self-duality of $H_p$ and conclude that there is also a maximal order.
A result of Larson (see Proposition \ref{Larson2}) now implies that the two orders must be equal, and therefore we only have one order.
The necessity of the existence of an ideal $I$ of $\Oint_K$ such that  $I^{2(p-1)}=(p)$ arises from the following consideration:
We prove that the set of integrals of $X\cap KC_p$ is exactly $\Oint_K\big(\frac{1}{\sqrt{p}} \sum_{i} h^i\big)$.
We write $J=\{x\in K\, |\, x(h-1)\in X\}$. By the classification in Section \ref{ordercp} we find out that $I:=J^{-1}$ must satisfy $I^{2(p-1)}=(p)$. The unique Hopf order of $H_p$ is the $\Oint_K$-subalgebra of $H_p$ generated by $e_0,e_1,g,J(u_a-e_0), J(u_b-e_0), J(v_a-e_1),$ and $J(v_b-e_1)$. \par \smallskip

In Section 6 we study the problem of definability over cyclotomic ring of integers of $H_p$ but now considered as a complex Hopf algebra. Since $H_p$ is already defined over a number field $K$, the question reads now as follows. Let $L/K$ be a Galois extension. Could a $L/K$-form of $H_p$ admit an order over some cyclotomic ring of integers? Namely, could there be another Hopf algebra $H'_p$ over $K$ such that $H'_p \otimes_K L \simeq H_p \otimes_K L$ and $H'_p$ admits an order over some cyclotomic ring of integers? The following result gives a number theoretical condition under which the answer is affirmative:

\begin{theorem-intro}\label{thintro3}
Let $\zeta_n \in \Co$ be a primitive $n$-th root of unity, with $n$ divisible by $p$. Consider $H_p$ as defined over $\Q(\zeta_n)$. Let $w \in \Z[\zeta_n]$ and $t \in \Co$ be such that $w$ is invertible and $t^2=w({\zeta_p}-1)$. Assume that\vspace{-1pt} there is $d \in \Z[\zeta_n]$ such that $\frac{1}{2}(d+t)\in \Ow_{\Q(\zeta_n,t)}$. Then, $H_p$ admits a $\Q(\zeta_n,t)/\Q(\zeta_n)$-form $H'_p$ which in turn admits an order over $\Z[\zeta_n]$.
\end{theorem-intro}

For $p=7$ and $n=28$ we construct elements $w,t$ and $d$ satisfying this condition. So, $H_7$, as a complex Hopf algebra, admits an order over the cyclotomic ring of integers $\Z[\zeta_{28}]$. \par \smallskip

The following questions on the definability over cyclotomic ring of integers of complex semisimple Hopf algebras remain open:

\begin{questions}
Does there exist a value of $p$ for which Nikshych's Hopf algebra $H_p$, as defined over the complex numbers,  does not admit an order over any cyclotomic ring of integers? More generally, does there exist a complex semisimple Hopf algebra which admits an order over a number ring but not over any cyclotomic ring of integers?
\end{questions}

\section{Preliminaries}\label{prelim}
\setcounter{equation}{0}

Throughout $H$ is a finite-dimensional Hopf algebra over a ground field $K$.
Unless otherwise stated, vector spaces, linear maps, and unadorned tensor products are over $K$.
The identity element of $H$ is denoted by $1_H$ and the comultiplication, counit, and antipode by $\Delta, \varepsilon,$ and $S$ respectively.
Our main references for Hopf algebra theory are \cite{Mo} and \cite{Ra}. \par \smallskip

We next collect from \cite[Subsection 1.2]{CM} several notions and results on Hopf orders that we will need later. We refer the reader to there for the proofs. \par \smallskip

\subsection{Hopf orders} Let $R \subset K$ be a subring and $V$ a finite-dimensional $K$-vector space.
Recall that an order of $V$ over $R$ is a finitely generated and projective $R$-submodule $X$ of $V$ such that the natural map $X \otimes_R K \rightarrow V$ is an isomorphism.
We view $X$ inside $V$ as the image of $X \otimes_R R$. A \textit{Hopf order of $H$ over $R$} is an order $X$ of $H$ such that $1_H \in X$, $XX \subseteq X$,
$\Delta(X)\subseteq X\otimes_{R} X$, $\varepsilon(X) \subseteq R$ and $S(X)\subseteq X$. (Note that $X\otimes_{R} X$ can be identified naturally as an $R$-submodule of $H\otimes H$.)
Equivalently, a Hopf order of $H$ over $R$ is a Hopf algebra $X$ over $R$, which is finitely generated and projective as an $R$-module,
such that $X\otimes_{R} K \simeq H$ as Hopf algebras over $K$. We will assume throughout this subsection that $K$ is a number field and $R=\Oint_K$. A Hopf order without indication of the ground ring means a Hopf order over $R$. \par \smallskip

\begin{proposition}\label{subsquo} Let $X$ be a Hopf order of $H$.
\begin{enumerate}
\item[(i)] The dual order $X^{\star}:=\{\varphi \in H^* : \varphi(X) \subseteq R\}$ is a Hopf order of $H^*$. \vspace{2pt}
\item[(ii)] The natural isomorphism $H \simeq H^{**}$ induces an isomorphism of Hopf orders $X \simeq X^{\star \star}$.
\item[(iii)] If $A$ is a Hopf subalgebra of $H$, then $X\cap A$ is a Hopf order of $A$. \vspace{2pt}
\item[(iv)] If $f:H \rightarrow B$ is a surjective Hopf algebra map, then $f(X)$ is a Hopf order of $B$.
\end{enumerate}
\end{proposition}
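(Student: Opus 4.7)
The plan is to leverage the fact that $R=\mathcal{O}_K$ is a Dedekind domain, so that finitely generated torsion-free $R$-modules are projective and reflexive, together with the principle that each Hopf order axiom on $X$ transposes into the corresponding axiom on the dual, on the intersection with a Hopf subalgebra, or on the image under a surjective Hopf map. The overall strategy in all four parts is the same: first secure the structural property (being a full $R$-lattice, i.e.\ finitely generated and projective with the correct $K$-rank), then translate axioms.

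For (i), I would first check that $X^{\star}=\mathrm{Hom}_R(X,R)$ is a finitely generated projective $R$-module, since $X$ is, and that it realizes as a full $R$-lattice in $H^*$ under the natural pairing. Each Hopf order axiom for $X^{\star}$ then arises by dualizing the corresponding axiom for $X$: $\Delta_H(X)\subseteq X\otimes_R X$ yields closure of $X^{\star}$ under multiplication; the inclusion $\mu_H(X\otimes_R X)\subseteq X$ yields $\Delta_{H^*}(X^{\star})\subseteq X^{\star}\otimes_R X^{\star}$; $\varepsilon_H(X)\subseteq R$ places $1_{H^*}=\varepsilon_H$ inside $X^{\star}$; $1_H\in X$ gives $\varepsilon_{H^*}(X^{\star})\subseteq R$; and $S_H(X)\subseteq X$ yields $S_{H^*}(X^{\star})\subseteq X^{\star}$. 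For (ii), the $K$-linear evaluation isomorphism $H\to H^{**}$ restricts to a map $X\to X^{\star\star}$ which is an isomorphism of $R$-modules because $X$, being finitely generated projective over the Noetherian ring $R$, is reflexive; the restricted map automatically respects the Hopf structure since the ambient map does.

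For (iii), I would argue that $X\cap A$ is a finitely generated submodule of $X$ (as $R$ is Noetherian) and torsion-free, hence projective over the Dedekind domain $R$; the induced map $(X\cap A)\otimes_R K\to A$ is injective by flatness of $K$ and surjective by clearing denominators of any element of $A$ using $X\otimes_R K=H$. The ring-theoretic closure conditions $1_H\in X\cap A$, $(X\cap A)(X\cap A)\subseteq X\cap A$, $S(X\cap A)\subseteq X\cap A$, and $\varepsilon(X\cap A)\subseteq R$ are immediate intersections of the corresponding conditions for $X$ and for $A$. The key technical step is to verify $\Delta(X\cap A)\subseteq (X\cap A)\otimes_R(X\cap A)$: for $x\in X\cap A$ the element $\Delta(x)$ lies in $X\otimes_R X$ and in $A\otimes_R A$, and one must identify $(X\cap A)\otimes_R(X\cap A)$ with the intersection of these two submodules inside $H\otimes_R H$, which relies on flatness and the local structure of projective modules over $R$.

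For (iv), the image $f(X)$ is finitely generated by surjectivity and torsion-free as a submodule of the $K$-vector space $B$, hence projective; and $f(X)\otimes_R K\to B$ is surjective because its image contains $f(X\otimes_R K)=f(H)=B$. The remaining Hopf order axioms for $f(X)$ follow by applying $f$ to the corresponding axioms for $X$, since $f$ is a Hopf algebra map. The main obstacle across the proof is the compatibility of intersection with tensor product in (iii); everything else reduces to formal manipulation once the Dedekind structure of $R$ secures projectivity and reflexivity of the relevant suborders.
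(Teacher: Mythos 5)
The paper does not actually prove this proposition: it is quoted from \cite[Subsection 1.2]{CM} and the reader is referred there, so there is no internal proof to compare against. Your argument is the standard one and is essentially sound: parts (i), (ii) and (iv) reduce, as you say, to transposing each axiom through the pairing, reflexivity, or the map $f$, once finite generation and projectivity over the Dedekind ring $R$ are secured. The one place where you correctly flag a genuine issue but leave it unresolved is the identification in (iii) of $(X\cap A)\otimes_{R}(X\cap A)$ with $(X\otimes_{R}X)\cap(A\otimes_{K}A)$ inside $H\otimes_{K}H$; "flatness and the local structure of projective modules" is too vague to count as a proof. The clean way to close it: $X/(X\cap A)$ is finitely generated and torsion-free (if $rx\in A$ with $r\neq 0$ then $x\in A$, since $A$ is a $K$-subspace), hence projective over $R$, so $X=(X\cap A)\oplus C$ as $R$-modules for some complement $C$; then $X\otimes_{R}X$ decomposes into the four summands $(X\cap A)\otimes_{R}(X\cap A)$, $(X\cap A)\otimes_{R}C$, $C\otimes_{R}(X\cap A)$, $C\otimes_{R}C$, each sitting inside the corresponding summand of $H\otimes_{K}H=(A\otimes A)\oplus(A\otimes W)\oplus(W\otimes A)\oplus(W\otimes W)$ with $W$ the span of $C$, and an element of $X\otimes_{R}X$ lies in $A\otimes_{K}A$ exactly when its components outside the first summand vanish. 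The same lattice-duality point, namely $(X\otimes_{R}X)^{\star}\simeq X^{\star}\otimes_{R}X^{\star}$ for finitely generated projective $X$, is what legitimizes your claim in (i) that $\Delta_{H^{*}}(X^{\star})\subseteq X^{\star}\otimes_{R}X^{\star}$, and deserves an explicit mention for the same reason. With these two identifications stated, your proof is complete and agrees with the standard treatment the paper cites.
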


An important fact in our study of Hopf orders is that they contain certain elements arising from the characters and cocharacters of the Hopf algebra.

\begin{proposition}\label{character}
Let $X$ be a Hopf order of $H$. Any character of $H$ belongs to $X^{\star}$. As a consequence, any character of $H^*$ belongs to $X$.
\end{proposition}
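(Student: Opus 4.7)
The plan is to produce, for each finite-dimensional representation $V$ of $H$, an $X$-stable $\Oint_K$-lattice $L \subset V$, and then interpret the character values as traces of $\Oint_K$-linear endomorphisms of $L$.

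First I would fix a representation $\rho : H \to \operatorname{End}_K(V)$ whose character is $\chi_V$, choose any $K$-basis $v_1,\ldots,v_n$ of $V$, and set $L_0 = \Oint_K v_1 + \cdots + \Oint_K v_n$. Since $X$ is finitely generated over $\Oint_K$, say by $x_1,\ldots,x_m$, the submodule
\[
L \;=\; \sum_{i,j} \Oint_K \cdot \rho(x_i)(v_j) \;\subseteq\; V
\]
is finitely generated over $\Oint_K$, contains $L_0$ (via $1_H \in X$), and spans $V$ over $K$. Using that $XX\subseteq X$ and that the $x_i$ generate $X$ as an $\Oint_K$-module, one checks that $\rho(x)L \subseteq L$ for every $x\in X$, so $L$ is an $X$-stable lattice. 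Because $\Oint_K$ is a Dedekind domain and $L$ is torsion free, $L$ is automatically projective.

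Next I would argue that for every $x\in X$ the character value $\chi_V(x)$ lies in $\Oint_K$. Each $\rho(x)|_L$ is an $\Oint_K$-linear endomorphism of the finitely generated projective $\Oint_K$-module $L$, so its trace is defined via the canonical pairing $L\otimes_{\Oint_K}L^\vee \to \Oint_K$; localizing at a prime makes $L$ free and shows this trace lies in $\Oint_K$. Since trace commutes with the base change $-\otimes_{\Oint_K} K$, the trace of $\rho(x)|_L$ equals $\chi_V(x)$, forcing $\chi_V(x)\in\Oint_K$. Hence $\chi_V \in X^{\star}$.

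The second assertion I would deduce formally: applying what we have just proved to the Hopf order $X^{\star}$ of $H^{*}$ shows that every character of $H^{*}$ lies in $(X^{\star})^{\star} = X^{\star\star}$, and Proposition \ref{subsquo}(ii) identifies $X^{\star\star}$ with $X$ inside $H\simeq H^{**}$. The only genuinely substantive step is constructing the $X$-stable lattice $L$; once that is in place, the trace argument and the duality reduction are essentially bookkeeping. The potential subtlety is the non-freeness of $L$ over $\Oint_K$, which I would handle by the localization argument indicated above rather than by trying to exhibit a basis.
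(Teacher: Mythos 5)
Your argument is correct: the lattice $L$ you build is indeed finitely generated, spanning, $X$-stable, and projective over the Dedekind ring $\Oint_K$, so the trace of $\rho(x)|_L$ lies in $\Oint_K$ and equals $\chi_V(x)$ after base change; the duality step via $X^{\star\star}\simeq X$ (Proposition \ref{subsquo}(ii)) is exactly how the second assertion is meant to follow. Note, however, that the paper itself does not reprove this proposition; it defers to \cite{CM}, where the argument is shorter and bypasses lattices altogether: since $X$ is a finitely generated $\Oint_K$-module closed under multiplication and containing $1_H$, every $x\in X$ is integral over $\Oint_K$; hence $\rho(x)$ satisfies a monic polynomial over $\Oint_K$, its eigenvalues (in an algebraic closure) are integral over $\Oint_K$, and so $\chi_V(x)$, being their sum and lying in $K$, lies in $\Oint_K$ because $\Oint_K$ is integrally closed in $K$. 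Both proofs rest on the same structural fact --- finite generation of $X$ as an $\Oint_K$-module together with $XX\subseteq X$ --- but your route trades the integrality-of-eigenvalues argument for the construction of an $X$-stable lattice plus the trace theory of finitely generated projective modules (handled by localization). What your version buys is that it never leaves $K$ (no algebraic closure, no eigenvalues) and it makes the integrality of the whole character, not just of single values, visibly a consequence of a stable integral form of $V$; what the cited version buys is brevity, since integrality of elements of an order is a one-line fact and no projectivity or localization is needed.
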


We will also need the following two results by Larson:

\begin{proposition}\cite[Proposition 2.2]{L2}\label{Larson1}
Let $H$ be a semisimple Hopf algebra over $K$ and $X$ a Hopf order of $H$.
Denote by $\Lambda_X$ and $\Lambda_{X^{\star}}$ the $R$-submodule of left integrals of $X$ and $X^{\star}$ respectively.
Then $\varepsilon(\Lambda_X)\varepsilon(\Lambda_{X^{\star}})=(\dim H)$ as ideals in $R$.
\end{proposition}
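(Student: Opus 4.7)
The plan is to reduce the statement to a local one and then exploit the Frobenius duality between $X$ and $X^{\star}$. Since both sides are fractional ideals of $R$, it suffices to verify the equality after localizing at each prime $\p$ of $R$. I therefore assume $R$ is a DVR with uniformizer $\pi$; then $X$ and $X^{\star}$ are free $R$-modules of rank $n:=\dim H$, and the rank-one integral modules are principal: $\Lambda_X=R\lambda$ and $\Lambda_{X^{\star}}=R\mu$ for suitable $\lambda\in X$, $\mu\in X^{\star}$.

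Next I would normalize integrals over $K$. Fix $\Lambda\in H$ and $\Lambda^{*}\in H^{*}$ with $\varepsilon(\Lambda)=n=\Lambda^{*}(1_H)$. Since $H$ is semisimple (hence unimodular, by Larson--Radford in characteristic zero), these integrals are two-sided, and a direct computation -- transparent in the model case $H=KG$, where $\Lambda=\sum_g g$ and $\Lambda^{*}=n\delta_1$ -- gives the normalization $\Lambda^{*}(\Lambda)=n$. Writing $\lambda=a\Lambda$ and $\mu=b\Lambda^{*}$ with $a,b\in K^{\times}$, one finds $\varepsilon(\lambda)\mu(1_H)=abn^{2}$ and $\mu(\lambda)=abn$, so the desired local equality $\varepsilon(\Lambda_X)\varepsilon(\Lambda_{X^{\star}})=(n)$ reduces to the assertion that $\mu(\lambda)\in R^{\times}$.

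That $\mu(\lambda)\in R$ is immediate from $\mu\in X^{\star}$ and $\lambda\in X$. For the converse, I would introduce the Fourier transform $\Psi:X\to X^{\star}$, $\Psi(x)(y)=\mu(xy)$, which is $R$-linear and, over $K$, an isomorphism by the nondegeneracy of the Frobenius pairing attached to an integral on a semisimple Hopf algebra. Using $\lambda y=\varepsilon(y)\lambda$ one computes $\Psi(\lambda)=\mu(\lambda)\varepsilon$. Now $\lambda$ is primitive in the free $R$-module $X$ -- otherwise $\pi^{-1}\lambda$ would lie in $\Lambda_X$ outside $R\lambda$ -- and $\varepsilon$ is primitive in $X^{\star}$ since $\varepsilon(1_X)=1$. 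Once $\Psi$ is known to be an $R$-isomorphism, it carries primitives to primitives, forcing $\mu(\lambda)\in R^{\times}$.

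The main obstacle is therefore upgrading $\Psi$ from a $K$-isomorphism to an $R$-isomorphism. For this I would invoke the Pareigis--Larson--Sweedler theorem, which says that a finitely generated projective Hopf algebra over a commutative ring is Frobenius with Frobenius form a generator of its module of integrals, applied to $X^{\star}$. A self-contained alternative is to build the converse Fourier transform $\Phi:X^{\star}\to X$, $\Phi(f)=\sum f(\lambda_{(2)})\lambda_{(1)}$, which is $R$-linear because $\Delta(\lambda)\in X\otimes_R X$, and to prove the Hopf inversion identity $\Psi\circ\Phi=\mu(\lambda)\cdot S^{-1}$; since $S$ is an $R$-automorphism of $X$, taking determinants yields $\det(\Psi)\det(\Phi)=u\cdot\mu(\lambda)^{n}$ for some $u\in R^{\times}$, which, combined with the primitivity observation above, closes the loop.
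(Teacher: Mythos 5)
The paper never proves this proposition: it is imported verbatim from Larson \cite[Proposition 2.2]{L2} (with background deferred to \cite{CM}), so the only meaningful comparison is with Larson's own argument. Your main route is sound and is, in substance, that argument: equality of ideals in the Dedekind ring $R$ can be checked locally; over the localization the modules of integrals $\Lambda_X=R\lambda$ and $\Lambda_{X^\star}=R\mu$ are free of rank one (this already follows from the fact that $\{c\in K: c\Lambda\in X\}$ is a fractional ideal, hence principal over a DVR); normalizing $\Lambda,\Lambda^*$ by $\varepsilon(\Lambda)=\Lambda^*(1)=n$ one gets $\Lambda^*(\Lambda)=n$ (this is not quite a ``direct computation'': it is the statement that the regular character is a two-sided integral taking the value $n$ on $\Lambda$, valid here because $H$ is semisimple in characteristic zero, hence unimodular and involutory by Larson--Radford, which you do acknowledge); the claim then reduces to $\mu(\lambda)\in R^\times$, and this follows from the Larson--Sweedler/Pareigis theorem that $X$ is Frobenius over $R$ with Frobenius form a generator of $\Lambda_{X^\star}$, together with your correct observation that $\Psi(\lambda)=\mu(\lambda)\varepsilon$ and that $\lambda$ and $\varepsilon$ are primitive. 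Modulo that citation, this is a complete and correct proof.

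The ``self-contained alternative'' you offer in place of the citation, however, has a genuine gap at the end. The inversion identity $\Psi\circ\Phi=\mu(\lambda)\cdot{}^tS^{\pm1}$ is fine (in the group-algebra model one gets $\Psi(\Phi(f))=\mu(\lambda)\,f\circ S$), and it gives $\det(\Psi)\det(\Phi)=u\,\mu(\lambda)^n$ with $u\in R^\times$; but this, together with the primitivity facts $\Psi(\lambda)=\mu(\lambda)\varepsilon$ and $\Phi(\varepsilon)=\lambda$, does \emph{not} force $\mu(\lambda)\in R^\times$. Concretely, with $n=2$ and $\mu(\lambda)=\pi$ the maps with matrices $\mathrm{diag}(\pi,1)$ and $\mathrm{diag}(1,\pi)$ satisfy $\Psi\Phi=\Phi\Psi=\pi\cdot\mathrm{id}$, $\Phi$ sends a primitive vector to a primitive vector, and $\Psi$ sends a primitive vector to $\pi$ times a primitive vector, so every constraint you list is compatible with $v_\pi(\mu(\lambda))\geq 1$. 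What is really needed is that $\Psi$ itself is \emph{surjective} onto $X^\star$, i.e.\ that the pairing $(x,y)\mapsto\mu(xy)$ is unimodular over $R$; that is precisely the content of Larson--Sweedler/Pareigis (proved via the fundamental theorem of Hopf modules over $R$, not by a determinant count), so route (b) cannot replace the citation it was meant to avoid. If you keep route (a) as the justification of this step, your write-up is correct.
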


\begin{proposition}\cite[Corollary 3.2]{L2}\label{Larson2}
With hypotheses as before, assume that $X$ and $Y$ are Hopf orders of $H$ such that $X\subseteq Y$.
If $\varepsilon(\Lambda_X)=\varepsilon(\Lambda_Y)$, then $X=Y$.
\end{proposition}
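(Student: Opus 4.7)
The plan is first to promote the hypothesis from an equality of $\varepsilon$-image ideals to an equality of the full integral modules (both for $X,Y$ and for $X^\star,Y^\star$), and then to derive $X=Y$ from this double equality via a Frobenius-pairing or discriminant comparison.

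For the promotion, note that any $\lambda\in\Lambda_X$ satisfies $x\lambda=\varepsilon(x)\lambda$ for all $x\in X$; this is a $K$-linear condition in $x$, and $X\otimes_R K=H=Y\otimes_R K$, so it extends to all of $H$, in particular to every $y\in Y$. Hence $\Lambda_X\subseteq\Lambda_Y$. By semisimplicity, $\dim_K\Lambda_H=1$; fixing $\lambda_0\in\Lambda_H$ with $\varepsilon(\lambda_0)=1$ we may write $\Lambda_X=I_X\lambda_0$ and $\Lambda_Y=I_Y\lambda_0$ for fractional $R$-ideals $I_X\subseteq I_Y$, so that $\varepsilon(\Lambda_X)=I_X$ and $\varepsilon(\Lambda_Y)=I_Y$. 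The hypothesis then reads $I_X=I_Y$, giving $\Lambda_X=\Lambda_Y$. Next, Proposition~\ref{subsquo}(i) gives $Y^\star\subseteq X^\star$, while Proposition~\ref{Larson1} applied to both orders plus cancellation of nonzero fractional ideals in the Dedekind ring $R$ yields $\varepsilon(\Lambda_{X^\star})=\varepsilon(\Lambda_{Y^\star})$; the same argument applied on $H^*$ then gives $\Lambda_{X^\star}=\Lambda_{Y^\star}$.

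The main obstacle is turning this double integral-equality into $X=Y$. My approach would be to fix a common generator $t\in\Lambda_{X^\star}=\Lambda_{Y^\star}$ and study the Fourier map $\Phi_t:H\to H^*$ defined by $\Phi_t(h)(k)=t(S(k)h)$; by the standard theory of Frobenius/semisimple Hopf algebras, $\Phi_t$ is a $K$-linear isomorphism and restricts to $R$-linear injections $\Phi_t(X)\hookrightarrow X^\star$ and $\Phi_t(Y)\hookrightarrow Y^\star$. Localising at each prime $\p$ of $R$ converts the problem into a lattice-length comparison; the length of each cokernel is controlled by $\varepsilon(\Lambda_X)\,\varepsilon(\Lambda_{X^\star})=(\dim H)=\varepsilon(\Lambda_Y)\,\varepsilon(\Lambda_{Y^\star})$ and so agrees for $X$ and $Y$. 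Comparing with $[Y:X]\cdot[X^\star:Y^\star]$, which coincide via the duality of Proposition~\ref{subsquo}(ii), forces both factors to be trivial, so $X=Y$. A slicker variant passes through discriminants: for orders in a separable $K$-algebra one has $d(X)=[Y:X]^2\,d(Y)$, and the discriminant of a semisimple Hopf order is expressible in terms of $\varepsilon(\Lambda_X)\,\varepsilon(\Lambda_{X^\star})$, so Steps~1--2 immediately give $d(X)=d(Y)$ and hence $[Y:X]=R$. Making either of these last length/discriminant identifications fully rigorous is the technical heart of the argument, essentially the content of Larson's \cite[Corollary~3.2]{L2}.
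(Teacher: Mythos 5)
The paper does not actually prove this proposition: it is quoted verbatim from Larson \cite[Corollary 3.2]{L2}, so there is no internal argument to compare yours against; your proposal has to stand on its own. Its first two steps do: promoting $\varepsilon(\Lambda_X)=\varepsilon(\Lambda_Y)$ to $\Lambda_X=\Lambda_Y$ (integrals of $X$ are integrals of $H$ lying in $Y$, and $\Lambda_H$ is one-dimensional with $\varepsilon(\Lambda_H)\neq 0$ by semisimplicity), and then deducing $\Lambda_{X^{\star}}=\Lambda_{Y^{\star}}$ from Proposition \ref{Larson1} and cancellation of fractional ideals, are correct and cleanly argued.

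The gap is exactly where you place it, but it is worth saying why your proposed justification cannot work as stated. You claim the cokernel length of $\Phi_t|_X:X\to X^{\star}$ is ``controlled by'' $\varepsilon(\Lambda_X)\varepsilon(\Lambda_{X^{\star}})=(\dim H)$; but by Proposition \ref{Larson1} that product equals $(\dim H)$ for \emph{every} Hopf order, so it carries no information that could distinguish $X$ from $Y$ or detect the index $[Y:X]$ --- the cokernel length depends on the normalization of $t$ relative to $\Lambda_{X^{\star}}$, not on the product of the $\varepsilon$-ideals. The discriminant variant fails for the same reason: if $d(X)$ were a function of $\varepsilon(\Lambda_X)\varepsilon(\Lambda_{X^{\star}})$ alone, all Hopf orders of $H$ would have equal discriminant, which is false already for $RC_p$ versus the maximal Larson order $H(R)$ inside $KC_p$ (the correct dependence is on $\varepsilon(\Lambda_X)$ itself, e.g.\ $d(H(I))=\varepsilon(\Lambda_{H(I)})^{p}$ in that example). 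The missing ingredient that does close your argument is the fundamental theorem of Hopf modules over $R$ (Larson--Sweedler, Pareigis): for a finitely generated projective Hopf order $X$, the Fourier map $h\mapsto \Phi_t(h)$ with $t$ a local generator of $\Lambda_{X^{\star}}$ is an isomorphism $X_{\p}\xrightarrow{\ \sim\ }X^{\star}_{\p}$, not merely an injection. Granting this, your steps 1--2 finish the proof immediately: since $\Lambda_{X^{\star}}=\Lambda_{Y^{\star}}$, the same $t$ serves for both orders, so $X^{\star}_{\p}=\Phi_t(X_{\p})\subseteq\Phi_t(Y_{\p})=Y^{\star}_{\p}\subseteq X^{\star}_{\p}$ for every $\p$, whence $X^{\star}=Y^{\star}$ and $X=Y$ by Proposition \ref{subsquo}(ii). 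As written, however, deferring the key step to ``essentially the content of Larson's Corollary 3.2'' leaves the proof circular, and the two quantitative claims meant to replace it are not correct.
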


\section{Classification of Hopf orders of $KC_p$}\label{ordercp}
\setcounter{equation}{0}

Let $p$ be a prime number and $\zeta$ a primitive $p$-th root of unity. Let $K$ be a number field containing $\zeta$ and $R:=\Oint_K$.
Let $\sigma$ denote a generator of the cyclic group $C_p$. We will describe here all Hopf orders of $KC_p$.
Tate and Oort classified all group schemes of order $p$ over $R$ in \cite[Theorem 3]{TO}.
Their result is more general than classifying Hopf orders over $R$.
However, we will combine it with Greither's result \cite[Lemma 1.2, page 40]{G} to give a more explicit description of all Hopf orders of $KC_p$. \par \smallskip

We begin with the following observation:

\begin{lemma}
Let $X$ be a Hopf order of $KC_p$. Consider the fractional ideal $$J=\{\alpha \in K :\  \alpha(\sigma-1)\in X\}.$$
Then $R \subseteq J \subseteq R\frac{1}{\zeta-1}$.
\end{lemma}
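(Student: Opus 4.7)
The plan is to establish the two inclusions separately, each by producing the needed elements of $X$ (respectively $X^{\star}$) from Proposition~\ref{character}.

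For the lower bound $R \subseteq J$, I would note that $\sigma$ is a group-like element of $KC_p$, hence a character of the dual Hopf algebra $(KC_p)^*$. By the second assertion of Proposition~\ref{character}, $\sigma$ must lie in $X$. Since $1_H \in X$ by the definition of a Hopf order, we get $\sigma - 1 \in X$, and multiplying by any $\alpha \in R$ keeps us inside the $R$-submodule $X$. Thus every $\alpha \in R$ lies in $J$.

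For the upper bound $J \subseteq R\frac{1}{\zeta - 1}$, I would pass to the dual and use the characters of $KC_p$ themselves. Because $K$ contains a primitive $p$-th root of unity, the $K$-algebra homomorphisms $KC_p \to K$ are precisely the maps $\chi_i \colon \sigma \mapsto \zeta^i$ for $0 \le i \le p-1$ (the image of $\sigma$ must be a $p$-th root of unity, and all of these lie in $K$). By Proposition~\ref{character}, each $\chi_i$ lies in $X^{\star}$; equivalently, $\chi_i(X) \subseteq R$. Hence, given $\alpha \in J$, the element $\alpha(\sigma - 1)$ belongs to $X$, and applying $\chi_1$ yields $\alpha(\zeta - 1) \in R$, which is exactly the statement $\alpha \in R\frac{1}{\zeta - 1}$.

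I do not anticipate any real obstacle: the whole argument is formal and relies only on the standard duality between grouplikes and characters together with Proposition~\ref{character}. The only point that deserves a brief verification is the parametrization of characters of $KC_p$ by the $p$-th roots of unity in $K$, which uses the assumption that $\zeta \in K$ so that $x^p - 1$ splits over $K$.
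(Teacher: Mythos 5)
Your proposal is correct and follows essentially the same route as the paper: the lower bound comes from $\sigma$ being a character of $(KC_p)^*$ and hence lying in $X$ by Proposition~\ref{character}, and the upper bound from applying the character $\sigma\mapsto\zeta$, which lies in $X^{\star}$, to $\alpha(\sigma-1)\in X$. The extra remark on the parametrization of all characters of $KC_p$ is fine but not needed; only the single character $\sigma\mapsto\zeta$ is used.
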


\pf
By Proposition \ref{character}, $\psi(X)\subseteq R$ for any character $\psi$ of $C_p$.
Using the character mapping $\sigma$ to $\zeta$ we see that $J(\zeta-1)\subseteq R$.
Hence $J \subseteq R\frac{1}{\zeta-1}$. \vspace{-1pt} For the other inclusion, notice that $\sigma$ is a character of $(KC_p)^*$.
Then $\sigma \in X$ again by Proposition \ref{character}, and $R(\sigma-1) \subseteq X$.
\epf

The above lemma leads us to the following definition:

\begin{definition}
Let $I$ be an ideal of $R$ containing $\zeta-1$. The {\it global Larson order associated to $I$} is the $R$-submodule of $KC_p$
\begin{equation}\label{LarHI}
H(I)=\bigoplus_{i=0}^{p-1} I^i(\zeta-1)^{-i}(\sigma-1)^i.
\end{equation}
\end{definition}

The name global Larson order will make sense in a few paragraphs. Notice that if $(\zeta-1) \subseteq I\subseteq I'$, then $H(I)\subseteq H(I')$. Even though the Larson orders are orders of the cyclic group algebra, they will play a decisive role in the classification of orders of Nikshych's Hopf algebra in Section \ref{Nik}.

\begin{lemma}\label{glo}
The global Larson orders are Hopf orders of $KC_p$. The set of integrals in $H(I)$ is $\frac{1}{p}I^{p-1}\sum_i \sigma^i$.
\end{lemma}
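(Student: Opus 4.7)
My plan has two independent parts: verifying that $H(I)$ is a Hopf order, and then identifying its integral module. As an $R$-module, $H(I)=\bigoplus_{i=0}^{p-1}I^i(\zeta-1)^{-i}(\sigma-1)^i$ is manifestly a finitely generated projective $R$-submodule of full rank $p$ inside $KC_p$, so the ``order'' part is free. For the Hopf structure, the crucial arithmetic input I will use throughout is the equality of fractional ideals $(p)=(\zeta-1)^{p-1}$ in $R$ (available because $\zeta$ is a primitive $p$-th root of unity), together with the standing hypothesis $(\zeta-1)\subseteq I$. The counit condition is immediate since $\varepsilon(\sigma-1)=0$ kills every positive-degree summand, and $1$ is the degree-zero piece. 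The real obstacle will be closure under multiplication.

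My plan for multiplicative closure is to recognize $H(I)$ as the $R$-subalgebra of $KC_p$ generated by the elements $\alpha(\sigma-1)$ with $\alpha\in I(\zeta-1)^{-1}$, so that $i$-fold products of such generators reproduce exactly the $i$-th summand for $i\le p-1$. The work is to show that even when $i\ge p$ the forced relation $(\sigma-1)^p=-\sum_{k=1}^{p-1}\binom{p}{k}(\sigma-1)^k$ (coming from $\sigma^p=1$) does not push us outside $H(I)$. I would prove by induction on $n$ that $I^n(\zeta-1)^{-n}(\sigma-1)^n\subseteq H(I)$ for every $n\ge 0$; the inductive step at $n\ge p$ uses $\binom{p}{k}\in(p)=(\zeta-1)^{p-1}R$ to absorb enough powers of $(\zeta-1)^{-1}$, after which the remaining ideal-theoretic bookkeeping reduces to the trivial inclusion $I^{n+k-1}\subseteq I^{n-p+k}$, valid because $I\subseteq R$.

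Closure under $\Delta$ and $S$ should be routine once multiplicative closure is in hand. The identity $\Delta(\sigma-1)=(\sigma-1)\otimes 1+1\otimes(\sigma-1)+(\sigma-1)\otimes(\sigma-1)$ combined with $\sigma-1\in H(I)$ (which follows from $1\in I(\zeta-1)^{-1}$, i.e.\ from $\zeta-1\in I$) places $\Delta(\alpha(\sigma-1))$ inside $H(I)\otimes_R H(I)$ for each algebra-generator, which suffices because $\Delta$ is an algebra map. For the antipode, $S(\alpha(\sigma-1))=-\sigma^{-1}\cdot\alpha(\sigma-1)$, and $\sigma^{-1}=\sigma^{p-1}\in H(I)$, so multiplicative closure handles it.

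For the integral module, the space of integrals of $KC_p$ over $K$ is the one-dimensional line $K\sum_i\sigma^i$, so the integrals of $H(I)$ are precisely the scalar multiples $\alpha\sum_i\sigma^i$ lying in $H(I)$. Expanding via the hockey-stick identity yields $\sum_{i=0}^{p-1}\sigma^i=\sum_{k=0}^{p-1}\binom{p}{k+1}(\sigma-1)^k$, which translates membership in $H(I)$ into the $p$ conditions $\alpha\binom{p}{k+1}(\zeta-1)^k\in I^k$, one for each $k$. The binding constraint is $k=p-1$ (since $\binom{p}{p}=1$), giving $\alpha(\zeta-1)^{p-1}\in I^{p-1}$, i.e.\ $\alpha\in\frac{1}{p}I^{p-1}$ using $(p)=(\zeta-1)^{p-1}$. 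For $k<p-1$ the divisibility $p\mid\binom{p}{k+1}$ supplies an extra factor of $(\zeta-1)^{p-1}$ which, combined with $(\zeta-1)\in I$, makes the remaining conditions automatic. This identifies the integral module as $\frac{1}{p}I^{p-1}\sum_i\sigma^i$, as claimed.
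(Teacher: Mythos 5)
Your proposal is correct and follows essentially the same route as the paper: multiplicative closure via the binomial expansion of $\sigma^p=1$ together with $p\mid\binom{p}{k}$ and $(\zeta-1)^{p-1}=(p)$ up to a unit (the paper phrases this as a monic degree-$p$ polynomial satisfied by $x=\frac{1}{\zeta-1}(\sigma-1)$, you as an induction on powers of $\sigma-1$), reduction of the $\Delta$-condition to the algebra generators $\alpha(\sigma-1)$ using the same three-term formula, and identification of the integrals by expanding $\sum_i\sigma^i$ in powers of $\sigma-1$ and reading off the coefficient conditions, with the top coefficient $k=p-1$ as the decisive one. No gaps; the differences are only notational.
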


\pf
We first show that $H(I)$ is closed under multiplication. For this, it is enough to prove that $I^p(\zeta-1)^{-p}(\sigma-1)^p\subseteq H(I)$.
This follows from the fact that the element $x:=\frac{1}{\zeta-1}(\sigma-1)$ satisfies a monic polynomial over $R$ of degree $p.$ We have:
$$1=\sigma^p=\big((\zeta-1)x+1\big)^p=\sum_{k=0}^p \binom{p}{k}(\zeta-1)^kx^k \Longrightarrow \sum_{k=1}^p \binom{p}{k}(\zeta-1)^{k-1}x^k=0.$$
The coefficient of $x^p$ is $(\zeta-1)^{p-1}$ and this equals $p\xi$ for some $\xi \in R$ invertible. Multiplying by $p^{-1}\xi^{-1}$ we obtain the desired polynomial.
On the other hand, it is clear that $1 \in H(I), \varepsilon(H(I))\subseteq R,$ and $S(H(I)) \subseteq H(I)$. It remains to prove
that $\Delta(H(I)) \subset H(I)\otimes_R H(I)$. Since $\Delta$ is an algebra map and $H(I)$ is closed under multiplication,
it suffices to check that $\Delta(rx) \in H(I)\otimes_R H(I)$ for every $r\in I$. A direct calculation reveals that
$$\Delta(rx) = rx\otimes 1 + 1\otimes rx + (\zeta-1)x\otimes rx.$$
The first two summands clearly belong to $H(I)\otimes_R H(I)$ and the third summand too because $\zeta-1 \in I$. \par \smallskip

To prove the statement about the integrals, notice that the integral $\frac{1}{p}\sum_i \sigma^i$ equals an invertible element times a monic polynomial $f$ of degree $p-1$ in $x$.
This can be seen by the following calculation:
\begin{equation}\label{integral}
\frac{1}{p}\sum_i \sigma^i = \frac{1}{p}\frac{\big((\zeta-1)x + 1\big)^p -1}{(\zeta-1)x}=\sum_{k=1}^p \frac{1}{p}\binom{p}{k}(\zeta-1)^{k-1}x^{k-1}.
\end{equation}
The fractional expression is just symbolic as $(\zeta-1)x$ is not necessarily invertible. The powers of $x$ in the right-hand side term have coefficients in $R$.
Observe that $p$ divides $\binom{p}{k}$ for $k=1,\ldots,p-1$. For $k=p$ the coefficient of $x^{p-1}$ is $(\zeta-1)^{p-1}=p\xi$ with $\xi \in R$ invertible.
If $r \in I^{p-1}$, then $\frac{r}{p}\sum_i \sigma^i$ is an integral in $H(I)$ by \eqref{integral}, since $\zeta-1 \in I$. For the reverse inclusion,
observe that by construction we have $I^{p-1}=\{\alpha \in K:\alpha x^{p-1}\in H(I)\}$. Let $\int$ be an integral in $H(I)$.
There is $\lambda \in K$ such that $\int=\frac{\lambda}{p}\sum_i \sigma^i$. Then $\lambda \xi x^{p-1} \in H(I)$ by \eqref{integral} and thus $\lambda \in I^{p-1}$.
\epf

We will next prove that all Hopf orders of $KC_p$ are global Larson orders. Over a local ring, this is a theorem by Greither, see \cite[Lemma 1.2, page 40]{G}.
We will use the local to global result of Tate and Oort \cite[Lemma 4]{TO} to pass to the number field case. \par \smallskip

Let $\p \subset R$ be a prime ideal such that $p \in \p$. Consider the corresponding valuation $\nu$, scaled so that $\nu(p)=1$
(we find more convenient to write here the valuation in additive terms). Then it is easy to see that $\nu(1-\zeta) = \frac{1}{p-1}$ because $(\zeta-1)^{p-1} = (p).$

\begin{definition} \cite[Section 3]{L1}
Let $b\in R_{\p}$ be such that $0\leq \nu(b) \leq \frac{1}{p-1}$. Set $s=\nu(b)$. The {\it Larson order} $H(s)$ is the $R_{\p}$-subalgebra of $K_{\p}C_p$ generated by $\frac{1}{b}(\sigma-1)$.
\end{definition}

One can see, exactly as in Lemma \ref{glo}, that Larson orders are indeed Hopf orders, and that $H(s)$ does not depend on the choice of $b$.
Notice that $H(s)$ is defined only if there is an element with valuation $s$ in $R_{\p}$.
We have the following classification result by Greither, see \cite[Theorem 3.0.0]{U} and \cite[Lemma 1.2, page 40]{G}.

\begin{theorem}[Greither]
All Hopf orders of $K_{\p}C_p$ over $R_{\p}$ are Larson orders.
\end{theorem}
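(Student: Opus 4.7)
Let $X$ be a Hopf order of $K_{\p}C_p$ over $R_{\p}$. The plan is to identify the maximal Larson suborder of both $X$ and $X^{\star}$, and then to force equality via Larson's integral identity together with Proposition \ref{Larson2}. I would begin by considering the fractional ideal
$$J := \{\alpha \in K_{\p} : \alpha(\sigma-1)\in X\}.$$
The argument of the preceding lemma, performed locally, gives $R_{\p}\subseteq J \subseteq R_{\p}\frac{1}{\zeta-1}$. Since $R_{\p}$ is a discrete valuation ring, $J = b^{-1}R_{\p}$ for some $b$ with $s:=\nu(b)\in[0,\frac{1}{p-1}]$. Setting $x:=b^{-1}(\sigma-1)\in X$, closure of $X$ under multiplication gives $H(s)=R_{\p}[x]\subseteq X$.

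The next step is to dualize. By Proposition \ref{subsquo}(i) the dual order $X^{\star}$ is a Hopf order of $(K_{\p}C_p)^{*}$. Since $K_{\p}$ contains a primitive $p$-th root of unity, the character map identifies $(K_{\p}C_p)^{*}$ with $K_{\p}C_p$; applying the previous construction to $X^{\star}$ under this identification yields a Larson order $H(s^{*})\subseteq X^{\star}$ for some $s^{*}\in[0,\frac{1}{p-1}]$. A direct pairing computation on the basis $b^{-i}(\sigma-1)^{i}$ of $H(s)$ should establish the self-duality formula $H(s)^{\star}=H(\frac{1}{p-1}-s)$. Taking duals reverses inclusions, so $H(s)\subseteq X$ gives $X^{\star}\subseteq H(\frac{1}{p-1}-s)$; combined with $H(s^{*})\subseteq X^{\star}$ one obtains $s+s^{*}\geq\frac{1}{p-1}$.

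For the reverse inequality I would compare counits of integrals. The local version of Lemma \ref{glo} gives $\varepsilon(\Lambda_{H(s)})=p\,b^{-(p-1)}R_{\p}$, which has valuation $1-(p-1)s$. The inclusion $\Lambda_{H(s)}\subseteq\Lambda_{X}$, valid because the integral space of $K_{\p}C_p$ is one-dimensional and any $K_{\p}C_p$-integral automatically acts as an integral for every suborder, then yields $\nu(\varepsilon(\Lambda_{X}))\leq 1-(p-1)s$, and symmetrically $\nu(\varepsilon(\Lambda_{X^{\star}}))\leq 1-(p-1)s^{*}$. Proposition \ref{Larson1} asserts that these two valuations add to $\nu(p)=1$, which pins $s+s^{*}=\frac{1}{p-1}$ and $\varepsilon(\Lambda_{X})=\varepsilon(\Lambda_{H(s)})$. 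Proposition \ref{Larson2} now concludes $X=H(s)$.

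The main obstacle is the self-duality computation $H(s)^{\star}=H(\frac{1}{p-1}-s)$: it requires writing the dual basis of $b^{-i}(\sigma-1)^{i}$ explicitly, identifying it with a standard Larson generator under the character isomorphism $(K_{\p}C_p)^{*}\cong K_{\p}C_p$, and verifying that the resulting scale has valuation $\frac{1}{p-1}-s$. Once this is secured, the remainder is valuation bookkeeping assembled from Larson's two propositions and the local analogue of Lemma \ref{glo}.
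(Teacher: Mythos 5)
There is a genuine gap, and it sits exactly at the step your whole argument hinges on. Dualizing the inclusion $H(s)\subseteq X$ gives $X^{\star}\subseteq H(s)^{\star}=H(\frac{1}{p-1}-s)$, and since Larson orders are increasing in their parameter, combining this with $H(s^{*})\subseteq X^{\star}$ yields $s^{*}\leq \frac{1}{p-1}-s$, i.e. $s+s^{*}\leq\frac{1}{p-1}$ — the \emph{opposite} of the inequality $s+s^{*}\geq\frac{1}{p-1}$ you claim to extract from it. The rest of your ingredients point the same way: $\Lambda_{H(s)}\subseteq\Lambda_{X}$ gives $\nu(\varepsilon(\Lambda_{X}))\leq 1-(p-1)s$, the symmetric statement gives $\nu(\varepsilon(\Lambda_{X^{\star}}))\leq 1-(p-1)s^{*}$, and Proposition \ref{Larson1} then reproduces $s+s^{*}\leq\frac{1}{p-1}$ once more. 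Nothing in this list bounds $X$ from above (equivalently, bounds $s^{*}$ from below), so the constraints are consistent with, say, $s=s^{*}=0$ and $\nu(\varepsilon(\Lambda_{X}))=\nu(\varepsilon(\Lambda_{X^{\star}}))=\frac12$, and Proposition \ref{Larson2} cannot be invoked because $\varepsilon(\Lambda_{X})=\varepsilon(\Lambda_{H(s)})$ is not established. The missing inequality $s+s^{*}\geq\frac{1}{p-1}$ says precisely that $X^{\star}$ contains the Larson order dual to $H(s)$, which by duality is equivalent to $X\subseteq H(s)$ — that is, it is essentially the theorem itself, so the proposal is circular at its core. (Your peripheral points are fine: the duality $H(s)^{\star}=H(\frac{1}{p-1}-s)$ is correct and standard, and Larson's propositions do apply over the Dedekind ring $R_{\p}$.)

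For comparison, the paper does not prove this statement at all: it is quoted from Greither \cite{G} (see also Underwood \cite{U}), and the genuine content there — ruling out Hopf orders strictly between $H(s)$ and the maximal order with the same ideal $J$ — requires a real local analysis (Greither's classification, or the valuative condition à la Underwood, or Tate--Oort), not just the formal bookkeeping with integrals and duality that you propose. If you want a self-contained argument you would need to supply that upper-bound analysis; the duality/integral framework alone cannot produce it.
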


We recall the following result of Tate and Oort:

\begin{proposition} \cite[Lemma 4]{TO}\label{cartesiansq}
For any commutative ring $T$, let $E(T)$ denote the set of isomorphism classes of group schemes of order $p$ over $T$. Then, the square
$$\xymatrix{E(R) \ar[r]\ar[d] & \prod_{\p\in Spec(R)}E(R_{\p})\ar[d] \\
E(K)\ar[r] &  \prod_{\p\in Spec(R)}E(K_{\p})}$$
where the maps are given by extension of scalars, is cartesian.
\end{proposition}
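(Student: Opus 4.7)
The plan is to prove bijectivity of the comparison map
$$\Phi: E(R)\ \longrightarrow\ E(K)\times_{\prod_{\p}E(K_{\p})}\prod_{\p}E(R_{\p})$$
induced by base change. First I would translate the problem into the language of coordinate Hopf algebras: an element of $E(T)$ is represented by a commutative $T$-Hopf algebra $B$ which is locally free of rank $p$. The fundamental input is that, since $R$ is a Dedekind domain, every finitely generated projective $R$-module $M$ is recovered from its generic and local completions as
$$M\ =\ M_K\cap\bigcap_{\p}M\ot_R R_{\p}$$
inside the diagonal embedding $M_K\hookrightarrow\prod_{\p}M_K\ot_K K_{\p}$.

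For injectivity, suppose $G,G'\in E(R)$ with coordinate Hopf algebras $B,B'$ map to the same point of the fiber product. One is given Hopf isomorphisms $\phi_K:B_K\to B'_K$ and $\phi_{\p}:B\ot R_{\p}\to B'\ot R_{\p}$ which coincide after further extension to $K_{\p}$. Applying the reconstruction formula to both $B$ and $B'$, the generic map $\phi_K$ restricts to a Hopf isomorphism $B\to B'$, yielding the required isomorphism over $R$.

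For surjectivity, given a compatible datum $(G_K,(G_{\p}),(\alpha_{\p}))$, I would use the dual isomorphisms $\alpha_{\p}^{*}$ to view each $B_{\p}:=\Oint(G_{\p})$ as a full $R_{\p}$-Hopf order of $B_K\ot K_{\p}$, and then define
$$B\ :=\ B_K\cap\bigcap_{\p}B_{\p}\ \subseteq\ \prod_{\p}B_K\ot K_{\p}.$$
Granted the claim that $B$ is a finitely generated projective $R$-module of rank $p$ with $B\ot R_{\p}=B_{\p}$ for every $\p$, the Hopf structure of $B_K$ transfers to $B$: multiplication, unit, counit and antipode restrict because each $B_{\p}$ is a sub-Hopf algebra, while comultiplication uses the reconstruction applied to the projective module $B\ot_R B$ together with the identification $(B\ot_R B)\ot R_{\p}=B_{\p}\ot_{R_{\p}}B_{\p}$.

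The main obstacle is therefore this finiteness/projectivity claim, since the defining intersection a priori ranges over infinitely many primes. I would collapse it to a finite patching problem by noting that for every prime $\p$ with $p\notin\p$ the integer $p$ is a unit in $R_{\p}$, so the group scheme $G_{\p}$ is étale; the classification of étale $R_{\p}$-algebras over a discrete valuation ring whose residue characteristic is prime to their order then forces $B_{\p}$ to coincide with the integral closure of $R_{\p}$ in $B_K\ot K_{\p}$. Taking $B_0$ to be the integral closure of $R$ in $B_K$, which is a finitely generated $R$-module since $R$ is Dedekind and $B_K$ is finite over $K$, one obtains $B_0\ot R_{\p}=B_{\p}$ for every $\p\nmid p$. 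The intersection therefore reduces to $B=B_0\cap\bigcap_{\p\mid p}B_{\p}$, a finite intersection of $R$-lattices inside $B_K$, which is itself an $R$-lattice of rank $p$. A direct comparison of localisations at the finitely many primes above $p$ then recovers $B\ot R_{\p}=B_{\p}$ in those cases as well, completing the surjectivity argument and hence the cartesianness of the square.
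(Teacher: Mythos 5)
The paper itself does not prove this proposition: it is quoted verbatim from Tate--Oort, so your attempt can only be measured against the standard patching argument. Your surjectivity half is essentially that argument and is sound: choose identifications $\alpha_{\mathfrak{p}}$ (for surjectivity you are free to choose them), note that for $\mathfrak{p}\nmid p$ the group scheme $G_{\mathfrak{p}}$ is \'etale, hence its coordinate ring is the integral closure and agrees with the localization of a fixed global lattice at almost all primes, glue the remaining finitely many lattices by intersection, and transfer the Hopf structure using $B\otimes_R B=(B_K\otimes_K B_K)\cap\bigcap_{\mathfrak{p}}(B_{\mathfrak{p}}\otimes_{R_{\mathfrak{p}}}B_{\mathfrak{p}})$.

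The genuine gap is in the injectivity half. Since $E(T)$ is a \emph{set of isomorphism classes}, mapping to the same point of the fibre product only tells you that isomorphisms $\phi_K\colon B_K\to B'_K$ and $\phi_{\mathfrak{p}}\colon B\otimes R_{\mathfrak{p}}\to B'\otimes R_{\mathfrak{p}}$ exist separately; it does \emph{not} provide isomorphisms ``which coincide after further extension to $K_{\mathfrak{p}}$'', and that compatibility is exactly what your restriction argument $\phi_K(B)\subseteq B'$ uses. The point is not cosmetic: for lattice-type data set-level cartesianness can genuinely fail (for rank-one projective modules the analogous square has $E(K)$ and all $E(R_{\mathfrak{p}})$ trivial while $E(R)=\operatorname{Pic}(R)$), the obstruction being a double coset of local automorphism groups. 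To close the gap you must correct the isomorphisms at the finitely many primes where $\phi_K(B\otimes R_{\mathfrak{p}})\neq B'\otimes R_{\mathfrak{p}}$: there $\theta_{\mathfrak{p}}:=(\phi_{\mathfrak{p}}\otimes K_{\mathfrak{p}})^{-1}\circ(\phi_K\otimes K_{\mathfrak{p}})$ is an automorphism of the generic fibre, and you need it to extend over $R_{\mathfrak{p}}$. This is where ``order $p$'' enters: in characteristic zero $G_{K_{\mathfrak{p}}}$ is \'etale of order $p$, so $\operatorname{Aut}(G_{K_{\mathfrak{p}}})=\mathbb{F}_p^{\times}$ acting by the multiplication maps $[n]$, and these are already defined over $R_{\mathfrak{p}}$ because $G$ is killed by $p$ integrally (the identity $[p]^*=u\circ\varepsilon$ holds over $R_{\mathfrak{p}}$ since it holds generically and $B_{\mathfrak{p}}$ is torsion-free). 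Replacing $\phi_{\mathfrak{p}}$ by $\phi_{\mathfrak{p}}\circ[n]$ makes the local and generic isomorphisms compatible, and only then does your intersection formula yield $\phi_K(B)=B'$. As written, your injectivity step assumes precisely the compatibility that has to be proved.
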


With this in hand we can establish:

\begin{theorem}\label{descrip}
Every Hopf order of $KC_p$ over $R$ is a global Larson order.
\end{theorem}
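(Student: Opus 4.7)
The plan is to reduce the statement to Greither's local theorem via the cartesian square of Tate--Oort. Given a Hopf order $X$ of $KC_p$ over $R$, I would first extract a candidate ideal $I$ from $X$ itself: following the opening lemma of this section, set
\[
J=\{\alpha\in K:\alpha(\sigma-1)\in X\}, \qquad I=(\zeta-1)\cdot J.
\]
By that lemma $R\subseteq J\subseteq R\tfrac{1}{\zeta-1}$, so $I$ is an integral ideal of $R$ containing $\zeta-1$, hence the global Larson order $H(I)$ is defined. The goal is then to show $X=H(I)$, and by the cartesian square it suffices to verify this equality after localizing at every prime $\p\in\operatorname{Spec}(R)$.

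For primes $\p$ with $p\notin\p$, the element $\zeta-1$ becomes a unit of $R_\p$ (since $(\zeta-1)^{p-1}=(p)$), so $I_\p=R_\p$ and a direct inspection of \eqref{LarHI} gives $H(I)_\p=R_\p C_p$. On the other side, the paragraph preceding the theorem explains why $X_\p=R_\p C_p$ in this case: $R_\p C_p$ is already a Hopf order (and the elements $\sigma^i$ lie in $X$ by Proposition \ref{character}), and since $p$ is invertible all primitive idempotents of $K_\p C_p$ belong to any Hopf order, pinning $X_\p$ down uniquely. For primes $\p$ with $p\in\p$, Greither's theorem tells us that $X_\p=H(s)$ for some $s$ with $0\leq s\leq \tfrac{1}{p-1}$, so $X_\p$ is freely generated as an $R_\p$-module by the powers of $\tfrac{1}{b}(\sigma-1)$ for some $b\in R_\p$ with $\nu(b)=s$. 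Reading off $J$ locally from this description gives $J_\p=b^{-1}R_\p$, hence $I_\p=(\zeta-1)b^{-1}R_\p$, and substituting into \eqref{LarHI} yields
\[
H(I)_\p=\bigoplus_{i=0}^{p-1}b^{-i}(\sigma-1)^{i}R_\p = H(s) = X_\p.
\]

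Combining the two cases, $X_\p=H(I)_\p$ for every prime $\p$, and the cartesian square of Tate and Oort forces $X=H(I)$. The main obstacle, I expect, is the bookkeeping between the local Larson parameter $s$, the local generator $b$, and the exponent of $(\zeta-1)$ appearing in \eqref{LarHI}: one has to check that the definition $I=(\zeta-1)J$ produces exactly the fractional ideal whose local valuation at $\p$ is $\tfrac{1}{p-1}-s$, so that $H(I)$ recovers Greither's local order on the nose rather than up to a shift. Once this matching is verified, the global conclusion is a formality from the cartesian diagram.
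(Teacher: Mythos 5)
Your proposal is correct and follows essentially the same route as the paper: reduce to Greither's local classification at primes above $p$, note uniqueness of the local order when $p\notin\p$, and glue via the Tate--Oort cartesian square. The only difference is cosmetic: you extract the ideal explicitly as $I=(\zeta-1)J$ from the order $X$ and check the local matching $H(I)_\p=H(s)=X_\p$ by hand, whereas the paper parametrizes the families of local Larson orders by ideals $I\supseteq(\zeta-1)$ and invokes the cartesian square directly; your bookkeeping of the local parameter (valuation $\tfrac{1}{p-1}-s$) is carried out correctly.
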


\pf
A Hopf order $X$ of $KC_p$ over $R$ can be viewed as a group scheme of order $p$. Proposition \ref{cartesiansq} tells us that giving $X$ is the same as giving its extension of scalars to $K$ and $R_{\p}$ for every $\p \in \textrm{Spec}(R)$, in a compatible way. The extension of scalars of $X$ to $K$ will be just $KC_p$, and thus we know the extension of scalars to all $K_{\p}$. Furthermore, if $\p\in \textrm{Spec}(R)$ satisfies $p\notin \p$, then we only have one Hopf order over $R_{\p}$. This is because all primitive idempotents will be contained in any Hopf order. \par \smallskip

The different orders will differ only by their extension of scalars to $R_{\p}$ with $p\in\p$. We know by Greither's Theorem that $X \otimes_R R_{\p}$ is a Larson order over $R_{\p}$. Let $\q_1^{r_1}\cdots \q_l^{r_l}$ be the prime decomposition  of $(\zeta-1)$ in $R$. Assume that $X \otimes_R R_{\q_i}$\vspace{-1.5pt} is isomorphic to $H(s_i)$ over $R_{\q_i}$. Consider the ideal $I=\prod_i\q_i^{(p-1)r_is_i}$. One can now see that the Larson order $H(I)$ will give rise to exactly the same localizations as $X$ at $\q_i$. Since the square in Proposition \ref{cartesiansq} is cartesian, this means that $X=H(I)$.
\epf

We know how the integrals inside Larson orders look like by Lemma \ref{glo}. As a consequence:

\begin{corollary}
A Hopf order $H(J)$ of $KC_p$ over $R$ which contains $\frac{1}{p}I^{p-1}\sum_i \sigma^i$ contains the Hopf order $H(I)$.
\end{corollary}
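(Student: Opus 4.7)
The plan is to combine the classification Theorem \ref{descrip} with the integral computation in Lemma \ref{glo}: any Hopf order containing the prescribed integrals must itself be a global Larson order $H(J)$ whose integrals form a submodule at least as large as $\frac{1}{p}I^{p-1}\sum_i \sigma^i$, and this forces $I \subseteq J$ and hence $H(I) \subseteq H(J)$ by the monotonicity remark following the definition of $H(I)$.

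In more detail, let $X$ denote the given Hopf order (which we implicitly assume satisfies $\zeta - 1 \in I$, so that $H(I)$ is defined). By Theorem \ref{descrip}, there exists an ideal $J \supseteq (\zeta-1)$ with $X = H(J)$, and by Lemma \ref{glo} the set of integrals of $X$ equals the $R$-submodule $\frac{1}{p}J^{p-1}\sum_i \sigma^i$. Since the space of integrals of $KC_p$ is one-dimensional and equal to $K \sum_i \sigma^i$, any element of the form $\frac{\alpha}{p}\sum_i \sigma^i$ lying in $X$ is automatically an integral of $X$. Therefore the hypothesis $\frac{1}{p}I^{p-1}\sum_i \sigma^i \subseteq X$ translates directly into $\frac{1}{p}I^{p-1}\sum_i \sigma^i \subseteq \frac{1}{p}J^{p-1}\sum_i \sigma^i$, i.e. the ideal containment $I^{p-1} \subseteq J^{p-1}$.

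The only remaining step is to pass from $I^{p-1} \subseteq J^{p-1}$ to $I \subseteq J$. This is where the arithmetic of $R$ enters: as $R$ is a Dedekind domain, write the prime factorizations $I = \prod \p_i^{a_i}$ and $J = \prod \p_i^{b_i}$. Then $I^{p-1} \subseteq J^{p-1}$ is equivalent to $(p-1)a_i \geq (p-1)b_i$ for every $i$, hence to $a_i \geq b_i$ for every $i$, which is exactly $I \subseteq J$. Applying the monotonicity $I \subseteq J \Rightarrow H(I) \subseteq H(J)$ noted just after the definition of $H(I)$ yields $H(I) \subseteq H(J) = X$, as required.

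I do not expect any real obstacle in this argument; the corollary is essentially a bookkeeping consequence of the two earlier results combined with unique factorization of ideals. The only point that deserves a brief check is that containment of $\frac{1}{p}I^{p-1}\sum_i \sigma^i$ inside $X$ really says these elements are \emph{integrals} of $X$, and not merely arbitrary elements, so that Lemma \ref{glo} may be applied; this follows immediately from the one-dimensionality of the space of integrals in $KC_p$.
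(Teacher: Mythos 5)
Your proposal is correct and follows the same route as the paper: identify the order as a global Larson order $H(J)$ via Theorem \ref{descrip}, use Lemma \ref{glo} to see the hypothesis forces $I^{p-1}\subseteq J^{p-1}$, and deduce $I\subseteq J$ (hence $H(I)\subseteq H(J)$) from unique factorization of ideals. The paper's proof is just a one-line version of this argument; your extra check that the given elements are genuinely integrals of the order is a worthwhile detail it leaves implicit.
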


\pf
Using the prime decomposition of ideals, $I^{p-1}\subseteq J^{p-1}$ implies $I\subseteq J$.
\epf

The computation of the submodule of integrals in Lemma \ref{glo} together with Theorem \ref{descrip} has the following outcome, from which we will derive the necessary condition in our main theorem:

\begin{corollary}\label{pisigmae}
Let $X$ be a Hopf order of $KC_p$.
\begin{enumerate}
\item[(i)] Suppose that the $R$-submodule of integrals of $X$ \vspace{-3pt} is generated by $\frac{1}{\sqrt{p}}\sum_i \sigma^i$.
Then there exists an ideal $I$ of $R$ such that $I^{2(p-1)}=(p)$. \vspace{3pt}
\item[(ii)] Suppose that $\frac{1}{\sqrt{p}}\sum_i \sigma^i \in X$ and there is $\pi \in K$ such that $\pi^2=\zeta-1$.\vspace{-2pt}
Then $\frac{1}{\pi}(\sigma-1) \in X$.
\end{enumerate}
\end{corollary}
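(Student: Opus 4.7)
The plan is to combine the classification Theorem \ref{descrip}, which forces $X = H(I)$ for a unique ideal $I$ of $R$ containing $\zeta-1$, with the explicit description of the integral module in Lemma \ref{glo}, namely that the integrals of $X$ form the $R$-module $\frac{1}{p}I^{p-1}\sum_{i}\sigma^i$. Both assertions then reduce to equalities and containments of fractional ideals in $K$, to be checked in the free abelian group of fractional ideals of $R$.

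For part (1), the hypothesis that the integral module is $R$-generated by $\frac{1}{\sqrt{p}}\sum_i \sigma^i$ (which implicitly requires $\sqrt{p}\in K$) translates into the fractional-ideal equality $\frac{1}{p}I^{p-1} = R\cdot\frac{1}{\sqrt{p}}$, i.e.\ $I^{p-1} = (\sqrt{p})$. Squaring produces $I^{2(p-1)} = (p)$, which is what is needed.

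For part (2), the hypothesis $\frac{1}{\sqrt{p}}\sum_i \sigma^i \in X = H(I)$ becomes $\sqrt{p} \in I^{p-1}$. To conclude that $\frac{1}{\pi}(\sigma-1)$ lies in $H(I)$, the definition of the Larson order shows that it suffices to prove the single containment $\pi \in I$. I would derive this by combining two ingredients: first, the identity $(\zeta-1)^{p-1} = (p)$ in $R$, already invoked in the proof of Lemma \ref{glo}; second, the assumption $\pi^2 = \zeta-1$, which gives $(\pi)^{2(p-1)} = (p) = (\sqrt{p})^2$. Unique factorization of fractional ideals permits extracting a square root, yielding $(\pi)^{p-1} = (\sqrt{p}) \subseteq I^{p-1}$, and then extracting a $(p-1)$-th root yields $(\pi) \subseteq I$. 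The desired membership follows from the rewriting
$$\frac{1}{\pi}(\sigma-1) \,=\, \pi \cdot \frac{1}{\zeta-1}(\sigma-1) \,\in\, I\cdot \frac{1}{\zeta-1}(\sigma-1) \,\subseteq\, H(I).$$

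There is no substantial obstacle: once Theorem \ref{descrip} and Lemma \ref{glo} are available, both statements are dictated by the form of $H(I)$ and of its integrals. The only step requiring a brief justification is the extraction of square and $(p-1)$-th roots of fractional ideals, which is legitimate because the ideal group of $R$ is free abelian on the prime ideals of $R$.
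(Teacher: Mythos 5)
Your proposal is correct and follows the paper's own argument essentially verbatim: both parts invoke Theorem \ref{descrip} to write $X=H(I)$, use Lemma \ref{glo} to translate the hypotheses into $I^{p-1}=(\sqrt{p})$ (resp.\ $(\sqrt{p})\subseteq I^{p-1}$), and in part (2) use $(\zeta-1)^{p-1}=(p)$ together with unique factorization of fractional ideals to get $(\pi)\subseteq I$ and hence $\frac{1}{\pi}(\sigma-1)=\frac{\pi}{\zeta-1}(\sigma-1)\in H(I)$. No discrepancies to report.
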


\pf (i) In view of Theorem \ref{descrip}, $X$ is isomorphic to $H(I)$ for some ideal $I$ of $R$ containing $\zeta-1$.
By hypothesis and Lemma \ref{glo} the submodule of integrals is
$$R\bigg(\frac{1}{\sqrt{p}}\sum_i \sigma^i\bigg)= \frac{1}{p}I^{p-1}\sum_i \sigma^i.$$
Then $I^{p-1}=(\sqrt{p})$ and thus $I^{2(p-1)}=(p)$. \par \medskip

(ii) From the hypothesis and Lemma \ref{glo}, we obtain $(\sqrt{p}) \subseteq I^{p-1}$. We know that $(\zeta-1)^{p-1}=(p)$. Using the prime factorization of ideals, we have $(\pi)^{p-1}=(\sqrt{p}) \subseteq I^{p-1}$. This implies that $(\pi) \subseteq I$. Then the element
$\frac{\pi}{\zeta-1}(\sigma-1)=\frac{1}{\pi}(\sigma-1) \in X$ by the construction of $H(I)$.
\epf

\section{An explicit description of Nikshych's Hopf algebra}\label{Nikshych}
\setcounter{equation}{0}

The goal of this section will be to write in an explicit way Nikshych's Hopf algebra. \par \smallskip

For an odd prime number $p$, Nikshych constructed in \cite{N1} a finite-dimensional, semisimple, weakly group-theoretical and non  group-theoretical Hopf algebra $H_p$ of dimension $4p^2$. It was defined in terms of a tensor category and a fiber functor. The representation category $\Rep(H_p)$ is constructed from the representation category of another Hopf algebra, $A_p$, by means of equivariantization by $C_2$. As fusion categories, $\Rep(H_p) \simeq \Rep(A_p)^{C_2}$. The Hopf algebra $A_p$ first appeared in the work of Masuoka \cite{M1}.
The above equivalence implies that $H_p$ fits into the short exact sequence
$$K\rightarrow A_p\rightarrow H_p\rightarrow KC_2\rightarrow K.$$

To describe explicitly the structure of $H_p$ we need to write the structure of $A_p$, the action of the generator $g$ of $C_2$ on $A_p$, and the comultiplication of $g$. \par \smallskip

{\it From now on we abbreviate $A_p$ to $A$ and $H_p$ to $H$. In this section we assume that $K$ is algebraically closed of characteristic zero.} \par \smallskip

The main result of this section is the following:

\begin{theorem}\label{structurenikshych}
Let $\zeta \in K$ be a primitive $p$-th root of unity. The Hopf algebra $H$ is generated, as an algebra over $K$, by the elements $e_0,e_1,u_a,u_b,v_a,v_b$ and $g$ subject to the following relations:
$$\begin{array}{lllll}
e_0+e_1=1,       & \hspace{3mm} e_0e_1=e_1e_0=0,   &               &                                       & \vspace{3pt}  \\
u_a^p=u_b^p=e_0, & \hspace{3mm} e_0u_a=u_a,        &  e_0u_b=u_b,  & \hspace{3mm} u_au_b=u_bu_a,           & \vspace{3pt} \\
v_a^p=v_b^p=e_1, & \hspace{3mm} e_1v_a=v_a,        &  e_1v_b=v_b,  & \hspace{3mm} v_av_b = \zeta_p v_bv_a, & \vspace{3pt} \\
g^2=1,           & \hspace{3mm} gu_a= u_bg,        &  gu_b= u_ag,  & \hspace{3mm} gv_a = v_ag,             & \hspace{3mm} gv_b = v_bg. \\
\end{array}$$
The comultiplication, counit, and antipode of $H$ is given by the following formulas:
\begin{equation}\label{comultA}
\begin{array}{lll}
\Delta(u_a) = u_a\otimes u_a + v_a\otimes v_a,       & \hspace{5mm} \varepsilon(u_a)=1, & \hspace{5mm} S(u_a) = u_a^{p-1}, \vspace{3pt} \\
\Delta(u_b) = u_b\otimes u_b + v_b\otimes v_b^{p-1}, & \hspace{5mm} \varepsilon(u_b)=1, & \hspace{5mm} S(u_b)=u_b^{p-1}, \vspace{3pt} \\
\Delta(v_a) = u_a \otimes v_a + v_a\otimes u_a,      & \hspace{5mm} \varepsilon(v_a)=0, & \hspace{5mm} S(v_a) = v_a^{p-1}, \vspace{3pt} \\
\Delta(v_b) = u_b\otimes v_b + v_b\otimes u_b^{p-1}, & \hspace{5mm} \varepsilon(v_b)=0, & \hspace{5mm} S(v_b)=v_b.
\end{array}
\end{equation}
The comultiplication of $g$ is given by
\begin{equation}\label{comultg1}
\begin{array}{ll}
\Delta(g) = & \hspace{-6pt}  {\displaystyle \frac{1}{p^2}\sum_{i,j,k,l} \zeta^{kj-il} gu_a^i u_b^j \otimes gu_a^k u_b^l + \frac{1}{p}\sum_{k,l} \zeta^{-(k+l)k} gu_a^k u_b^l \otimes gv_a^{k+l} v_b^{k+l}} \vspace{4pt} \hspace{-7pt} \\
 & \hspace{-6pt}  {\displaystyle + \frac{1}{p} \sum_{k,l} \zeta^{k(k+l)} gv_a^{k+l}v_b^{(p-1)(k+l)} \otimes gu_a^k u_b^l + \frac{1}{p} \sum_{k,l} gv_a^k v_b^l \otimes gv_a^{(p-1)l}v_b^k.} \hspace{-7pt}
\end{array}
\end{equation}
The counit and antipode of $g$ are $\varepsilon(g)=1$ and $S(g)=g$.
\end{theorem}

The rest of this section will be devoted to prove Theorem \ref{structurenikshych}.

\subsection{The algebra $A$}\label{algA} As an algebra, $A$ is the direct sum
$$K(C_p\times C_p) \oplus K^c(C_p\times C_p),$$
where $c: (C_p \times C_p) \times (C_p \times C_p) \rightarrow K^{\times}$ is the $2$-cocycle given by
$$c(a^ib^j,a^kb^l)=\zeta^{-jk}, \qquad 0 \leq i,j,k,l <p.$$
Here $a,b$ are generators of $C_p\times C_p$. We present the group algebra $K(C_p\times C_p)$ by generators $u_a,u_b$ and defining relations $u_a^p=u_b^p=1, u_au_b=u_bu_a$. The twisted group algebra $K^c(C_p\times C_p)$ is presented by generators $v_a,v_b$ and relations $v_a^p=v_b^p=1, v_av_b=\zeta v_bv_a$. Notice that $K^c(C_p\times C_p)$ is isomorphic to the matrix algebra ${\rm M}_p(K)$. To shorten, {\it we set $A_0=K(C_p\times C_p)$ and $A_1=K^c(C_p\times C_p)$. We denote the units of $A_0$ and $A_1$ by $e_0$ and $e_1$ respectively.} So $1_A=e_0 + e_1$ and $e_0e_1=e_1e_0=0$.
Unless otherwise specified, the inverses are taking inside either $A_0$ or $A_1$. For example, $u_a^{-1}$ means $u_a^{p-1}$. \par \smallskip

The comultiplication, counit, and antipode of $A$ are described in \eqref{comultA} above.

\subsection{The algebra $H$} As an algebra, $H$ is the crossed product $A*KC_2,$ where $g$ acts as an algebra automorphism on $A$ by:
$$g(u_a)= u_b, \quad g(u_b) = u_a, \quad g(v_a) = v_a, \quad g(v_b) = v_b.$$
In $H$ we have the relations:
$$gu_a = u_bg, \quad gu_b = u_ag, \quad gv_a = v_ag, \quad gv_b = v_bg.$$
The hard part in the description of $H$ is the formula for $\Delta(g)$. Recall from \cite{N1} that $H$ is constructed as follows:
the automorphism $g$ induces an autoequivalence $$F:\Rep(A) \rightarrow \Rep(A),\ V \mapsto {}_g V.$$
Here ${}_gV=V$ as a vector space, with new action $x \cdot v=g(x)v$ for all $x \in A,v \in V$. The functor $F$ is a tensor equivalence. Moreover, $F^{-1}=F$. To compute $\Delta(g)$ we will need to describe the tensor structure of $F$. For this, we first need to consider the irreducible representations of $A$.

\subsection{Irreducible representations of $A$}\label{IrrA}
Every irreducible representation of $A$ is an irreducible representation of either $A_0$ or $A_1$. \par \smallskip

The algebra $A_0$ has $p^2$ one-dimensional irreducible representations, which we denote by $K^{i,j}$ with $0\leq i,j <p.$ As a $K$-vector space, $K^{i,j}=K$. The action of $u_a$ and $u_b$ on $K^{i,j}$ is given by:
$$u_a\cdot 1 = \zeta^i 1 \qquad u_b\cdot 1 =\zeta^j 1.$$

The algebra $A_1$ has only one irreducible representation, of dimension $p$,
which we denote by $M$. Let $\{m_i: 0 \leq i <p\}$ be a basis for $M$. The action of $A_1$ on $M$ is
$$v_a \cdot m_i = \zeta^i m_i, \qquad v_b \cdot m_i = m_{i+1}\  (\textrm{indices are taken}\  mod.\  p).$$

\subsection{Tensor structure on $F$}\label{tenstrF}
For any $V,W \in \Rep(A)$ irreducible we must establish an isomorphism $\theta_{V,W}:F(V\otimes W)\rightarrow F(V)\otimes F(W)$ satisfying the unit and associativity constraints. We do need to calculate these isomorphisms explicitly, as we will use them later to compute $\Delta(g)$. Observe that at the level of representations $F(K^{i,j}) = K^{j,i}$ and $F(M)=M$.

\subsubsection{Isomorphisms between certain representations of $A$}\label{isoms}
Given $x\in A_1$ invertible, ${}_xM$ stands for the following representation of $A_1$: as a vector space, ${}_xM=M$, and the action is given by
$$y\cdot m = (x^{-1}yx)m \qquad \forall y \in A_1,m \in M.$$
We have an isomorphism
$${}_x M \rightarrow M,\ m \mapsto xm.$$
This will be used in this subsection to define isomorphisms between certain tensor products of representation.
Consider the representation $K^{i,j} \otimes M$. Identify it with $M,$ as a vector space, via $1\otimes m \mapsto m$.
Under this identification, $v_a$ and $v_b$ act via $\zeta^iv_a$ and $\zeta^jv_b$ respectively. Since $v_av_b=\zeta v_bv_a$, we see that this is the same as ${}_xM$ for $x=v_a^{-j}v_b^i$.
Then we have an isomorphism of representations
\begin{equation}\label{isoijmleft}
l_{i,j}:K^{i,j}\otimes M\rightarrow M, \ 1\otimes m \mapsto (v_a^{-j}v_b^i)m.
\end{equation}
In a similar fashion, $M\otimes K^{i,j}$ is isomorphic to ${}_xM$ for $x=v_a^jv_b^i$ via
\begin{equation}\label{isoijmright}
r_{i,j}:M\otimes K^{i,j}\rightarrow M, \ m\otimes 1 \mapsto (v_a^jv_b^i)m.\smallskip
\end{equation}

We discuss separately the four different cases that occur in the description of $\theta_{V,W}$:

\subsubsection{Two representations of $A_0$}
We begin by considering the\vspace{0.5pt} case $V=K^{i,j}$ and $W=K^{k,l}$. We have $V\otimes W \simeq K^{i+k,j+l}.$
We must give an isomorphism\vspace{0.5pt} between $F(V\otimes W) \simeq F(K^{i+k,j+l}) \simeq K^{j+l,i+k}$ and\vspace{0.5pt} $F(V)\otimes F(W) \simeq K^{j,i} \otimes K^{l,k} \simeq K^{j+l,i+k}$.
It will be determined by a nonzero scalar $\mu((i,j),(k,l))$. Then:
$$\theta_{V,W}:F(V\otimes W) \rightarrow F(V)\otimes F(W), \ 1 \otimes 1 \mapsto \mu((i,j),(k,l))1 \otimes 1.$$
The associativity constraints yield that $\mu:(C_p\times C_p)^2\rightarrow K^{\times}$ is a $2$-cocycle. We shall compute $\mu$ explicitly in the sequel. We will see that:
\begin{equation}\label{thetaijkl}
\theta_{K^{i,j},K^{k,l}}:F(K^{i,j}\otimes K^{k,l}) \rightarrow F(K^{i,j})\otimes F(K^{k,l}), \ 1 \otimes 1 \mapsto \zeta^{il-jk} 1 \otimes 1.
\end{equation}

\subsubsection{One representation of $A_0$ and one representation of $A_1$}
We next consider the case $V=K^{i,j}$ and $W=M$ (and $V=M$ and $W=K^{i,j}$).
We first deal with the values $(i,j)=(0,1), (1,0)$ and then we will deduce a formula for an arbitrary pair $(i,j)$. \par \smallskip

We need to find an isomorphism between $F(K^{1,0}\otimes M)$ and $F(K^{1,0})\otimes F(M)$.
Both representations are isomorphic to $M$. Thus, up to a nonzero scalar, there is only one possible choice.
Using \eqref{isoijmleft}, we see that such an isomorphism must be given by
$$\theta_{K^{1,0},M}:F(K^{1,0}\otimes M)\rightarrow F(K^{1,0})\otimes F(M), \ 1\otimes m \mapsto \alpha_{1,0} \otimes (v_av_b)m,$$
for some $\alpha_{1,0} \in K$ (that will be determined later). In a similar fashion:
$$\begin{array}{ll}
\theta_{K^{0,1},M}:F(K^{0,1}\otimes M)\rightarrow F(K^{0,1})\otimes F(M), & 1\otimes m \mapsto \alpha_{0,1} \otimes (v_b^{-1}v_a^{-1})m, \vspace{2pt} \\
\theta_{M,K^{1,0}}:F(M\otimes K^{1,0})\rightarrow F(M) \otimes F(K^{1,0}), & m\otimes 1 \mapsto \beta_{1,0} (v_a^{-1}v_b) m \otimes 1, \vspace{2pt} \\
\theta_{M,K^{0,1}}:F(M\otimes K^{0,1})\rightarrow F(M)\otimes F(K^{0,1}), & m \otimes 1 \mapsto \beta_{0,1} (v_b^{-1}v_a) m\otimes 1,
\end{array}$$
for $\alpha_{0,1},\beta_{1,0},\beta_{0,1} \in K$. \smallskip

The tensor structure on $F$ will depend on $\alpha_{1,0},\alpha_{0,1},\beta_{1,0},\beta_{0,1}$, and $\mu$.
The compatibility of $F$ with associativity constraints will impose some restrictions on the possible values of \smallskip them.

We show by induction that the following formula holds for $(i,0)$ with $i \geq 2$:
$$\theta_{K^{i,0},M}:F(K^{i,0}\otimes M) \rightarrow F(K^{i,0}) \otimes F(M), \ 1\otimes m \mapsto \alpha_{1,0}^i \otimes (v_a^iv_b^i)m.$$
Using naturality and compatibility of $F$ with the associativity constraint we have the following commutative diagram:
$$\scalebox{0.95}[0.93]{\xymatrix@R=24pt@C=1pt{F(K^{i,0}\otimes M) \ar[d]_{\theta_{K^{i,0},M}} \ar[r] & F(K^{i-1,0}\hspace{-1.5pt}
\otimes\hspace{-1.5pt} K^{1,0}\hspace{-1.5pt} \otimes\hspace{-1.5pt} M) \ar[r]^(0.55){F(id\, \otimes\, l_{1,0})} & F(K^{i-1,0} \otimes M) \ar[d]^{\theta_{K^{i-1,0},M}} \\
F(K^{i,0}) \otimes F(M)    & & F(K^{i-1,0}) \otimes F(M) \ar[d]^{id\, \otimes\, F(l_{1,0}^{-1})} \\
F(K^{i-1,0}\hspace{-1.5pt} \otimes\hspace{-1.5pt} K^{1,0})\hspace{-1.5pt} \otimes\hspace{-1.5pt} F(M) \ar[u] &   & F(K^{i-1,0})\hspace{-1.5pt} \otimes\hspace{-1.5pt} F(K^{1,0}\hspace{-1.5pt} \otimes\hspace{-1.5pt} M) \ar[dl]^(0.4){\phantom{aaa} id\, \otimes\, \theta_{K^{1,0},M}} \\
  & F(K^{i-1,0})\hspace{-1.5pt} \otimes\hspace{-1.5pt} F(K^{1,0})\hspace{-1.5pt} \otimes\hspace{-1.5pt} F(M)  \ar[ul]^(0.6){\theta_{K^{i-1,0},K^{1,0}}^{-1} \otimes\, id\phantom{aaaa}} & }}$$
One can check that $1\otimes m$ is mapped to
$$\alpha_{1,0}^i\,\mu((i-1,0),(1,0))^{-1} \otimes (v_a^iv_b^i)m.$$
Without loss of generality, we can assume that $\mu((i,0),(j,0)) = \mu((0,i),(0,j)) = 1$, and then we arrive at the desired formula. \smallskip

By a similar calculation we also obtain:
$$\theta_{K^{0,j},M}:F(K^{0,j}\otimes M)\rightarrow F(K^{0,j})\otimes F(M),\hspace{5pt} 1\otimes m\mapsto \alpha_{0,1}^j \otimes (v_b^{-j}v_a^{-j})m.$$

We can combine these two isomorphisms with the associativity constraint to get the following general formula:
\begin{equation}\label{thetakijm}
\ \ \theta_{K^{i,j},M}\! : \! F(K^{i,j} \otimes M)\! \rightarrow\! F(K^{i,j})\otimes F(M), \hspace{2pt} 1 \otimes m \mapsto \alpha_{1,0}^i\alpha_{0,1}^j\zeta^{i(i-j)} \otimes v_b^{i-j}v_a^{i-j}m.\hspace{-5pt}
\end{equation}
This is done as follows. Using naturality and compatibility of $F$ with the associativity constraint we can construct the following commutative diagram:
$$\scalebox{0.95}[0.93]{\xymatrix@R=24pt@C=1pt{
F(K^{i,j} \otimes M) \ar[d]_{\theta_{K^{i,j},M}} \ar[r] & F(K^{i,0} \otimes K^{0,j} \otimes M) \ar[r]^(0.55){F(id\, \otimes\, l_{0,j})} & F(K^{i,0} \otimes M) \ar[d]^{\theta_{K^{i,0},M}} \\
F(K^{i,j}) \otimes F(M)    & & F(K^{i,0}) \otimes F(M) \ar[d]^{id\, \otimes\, F(l_{0,j}^{-1})} \\
F(K^{i,0} \otimes K^{0,j}) \otimes F(M) \ar[u] &   & F(K^{i,0}) \otimes F(K^{0,j} \otimes M) \ar[dl]^(0.4){\phantom{aaaa} id\, \otimes\, \theta_{K^{0,j},M}} \\
  & F(K^{i,0}) \otimes F(K^{0,j}) \otimes F(M)  \ar[ul]^(0.6){\theta_{K^{i,0},K^{0,j}}^{-1}\, \otimes\, id\phantom{aaaa}} & }}$$
Following the longest path, we obtain:
\begin{equation}\label{kijm}
\theta_{K^{i,j},M}(1 \otimes m)=\frac{\alpha_{1,0}^i\alpha_{0,1}^j\zeta^{i^2}}{\mu((i,0),(0,j))} \otimes v_b^{i-j}v_a^{i-j}m.
\end{equation}
We can write a similar diagram with $K^{0,j}\otimes K^{i,0}\otimes M$ in the upper central term and proceeding accordingly we get:
$$\theta_{K^{i,j},M}(1 \otimes m)=\frac{\alpha_{1,0}^i\alpha_{0,1}^j\zeta^{i^2-2ij}}{\mu((0,j),(i,0))} \otimes v_b^{i-j}v_a^{i-j}m.$$
These two equalities yield the following formula for $\mu$:
$$\frac{\mu((0,j),(i,0))}{\mu((i,0),(0,j))}=\zeta^{-2ij}.$$
Since $C_p \times C_p$ is abelian and $K$ is assumed to be algebraically closed of characteristic zero, this completely determines the cohomology class of $\mu$.
We choose the following representative from this cohomology class:
$$\mu((i,j),(k,l))=\zeta^{il-jk}.$$
Substituting this in \eqref{kijm} we arrive at the desired formula for $\theta_{K^{i,j},M}$.
By making this choice we also assure that $F^2=Id$ on the subcategory of representations \smallskip of $A_0$.

By a similar calculation, we obtain:
\begin{equation}\label{thetamkij}
\ \ \theta_{M,K^{i,j}} \! :\! F(M\otimes K^{i,j})\! \rightarrow\! F(M)\otimes F(K^{i,j}), m \otimes 1\mapsto \beta_{1,0}^i \beta_{0,1}^j\zeta^{j(j-i)}v_a^{j-i} v_b^{i-j}m \otimes 1.\hspace{-5pt}
\end{equation}

We have described so the tensor structure on $F$ for the tensor product of representations of $A_0$ with representations of $A_1$.
One can verify that {\it this structure is indeed compatible with all the associativity constraints involving two irreducible representations of $A_0$ if and only if
$\alpha_{1,0},\alpha_{0,1},\beta_{1,0},$ and $\beta_{0,1}$ are $p$-th roots of unity. Moreover, $F^2=Id$ on $K^{i,j} \otimes M$ and $M\otimes K^{i,j}$ if and only if
\begin{equation}\label{comp1}
\alpha_{1,0}\alpha_{0,1}=\beta_{1,0}\beta_{0,1}=1.
\end{equation}
We shall assume that this holds henceforth.}

\subsubsection{Two representations of $A_1$}
Lastly, we compute the isomorphism between $F(M\otimes M)$ and $F(M)\otimes F(M)$. We know that $M\otimes M \simeq \oplus_{i,j=0}^{p-1}\, K^{i,j}.$ One can easily check that the element $q_{i,j} \in M\otimes M$ spanning the 1-dimensional representation isomorphic to $K^{i,j}$ must be of the form
$$q_{i,j}=\lambda_{i,j}\sum_t \zeta^{-tj} m_t\otimes m_{i-t}, \ \textrm{with}\ \lambda_{i,j} \in K.$$
(Unless otherwise specified, throughout the limits in the sums are understood to run from $0$ to $p-1$.)
We take $\lambda_{i,j}=1$ for every $i,j$. The isomorphism is given by:
\begin{equation}\label{thetamm}
\theta_{M,M}:F(M\otimes M)\rightarrow F(M)\otimes F(M), q_{i,j} \mapsto \gamma_{i,j}q_{j,i},
\end{equation}
for some $\gamma_{i,j} \in K.$ Using naturality and compatibility of $F$ with the associativity constraint at $K^{i,j}\otimes M\otimes M$ we obtain the following commutative diagram:
$$\scalebox{0.95}[0.91]{
\xymatrix@R=24pt@C=1pt{
\bigoplus\limits_{s,t} F(K^{i,j} \otimes K^{s,t}) \ar[d]_{\bigoplus\limits_{s,t} \theta_{K^{i,j},K^{s,t}}} \ar[r] & F(K^{i,j}
\otimes M \otimes M) \ar[r]^(0.55){F(l_{i,j}\, \otimes\, id)} & F(M \otimes M) \ar[d]^{\theta_{M,M}} \\
\bigoplus\limits_{s,t} F(K^{i,j}) \otimes F(K^{s,t}) \ar[d]   &  &  F(M) \otimes F(M) \ar[d]^{F(l_{i,j}^{-1})\, \otimes\, id} \\
F(K^{i,j}) \otimes F(M  \otimes M) \ar[rd]_{id\, \otimes\, \theta_{M,M}} &   & F(K^{i,j} \otimes M) \otimes F(M) \ar[dl]^(0.4){\phantom{aaaa} \theta_{K^{i,j},M}\, \otimes\, id} \\
  & F(K^{i,j}) \otimes F(M) \otimes F(M) & }}$$
Through the isomorphism on the upper right side, $1 \otimes 1 \in K^{i,j} \otimes K^{s,t}$ is mapped to $\gamma_{s+i,t+j}\alpha_{1,0}^i\alpha_{0,1}^j\zeta^{it-js} \otimes q_{t,s}$.
Through the isomorphism on the left side, $1 \otimes 1$ is mapped to $\gamma_{s,t}\zeta^{it-js} \otimes q_{t,s}$. From here,
\begin{equation}\label{comp2}
\gamma_{i,j}=\alpha_{1,0}^{j-i}\gamma_{0,0}.
\end{equation}
By considering the associativity constraint for $M\otimes M \otimes K^{i,j}$ and writing the analogous diagram we get $\gamma_{i,j}=\beta_{1,0}^{j-i}\gamma_{0,0}$. This implies
\begin{equation}\label{comp3}
\alpha_{1,0}=\beta_{1,0}.
\end{equation}
The tensor structure of $F$ on $M\otimes M$ depends therefore on $\alpha_{1,0}$ (which is a $p$-th root of unity) and $\gamma_{0,0}$
(which equals $\pm 1$ since $F^2=Id$ on $M\otimes M$). \smallskip

By checking compatibility with all associativity constraints we see that the isomorphism we have constructed does furnish a tensor structure on $F$.
It can be shown directly that no matter what choice we make for $\gamma_{0,0}$ and $\alpha_{1,0}$, we will always end up with an isomorphic functor.
{\it We can thus assume, without loss of generality, that} $$\gamma_{0,0}=\alpha_{1,0}=1.$$ Then, the scalars $\alpha_{0,1}, \beta_{1,0}, \beta_{0,1},$ and $\gamma_{i,j}$ equal $1$ by equations \eqref{comp1}, \eqref{comp2}, and \eqref{comp3}. This finishes the description of the tensor structure on $F$. \smallskip

We summarize our discussion in the following result.

\begin{proposition}\label{uniquef}
Let $A$ be the Hopf algebra defined in Subsection \ref{algA}. Consider its irreducible representations $K^{i,j}$, with $0\leq i,j <p$, and $M$ defined in Subsection \ref{IrrA}. There exists (up to isomorphism) only one tensor functor $F:\Rep(A) \rightarrow \Rep(A)$ such that $F(K^{i,j}) \simeq K^{j,i}$ and $F(M)\simeq M$. It is given by the equations \eqref{thetaijkl}, \eqref{thetakijm}, \eqref{thetamkij}, and \eqref{thetamm}, where the scalars $\alpha_{1,0}, \alpha_{0,1}, \beta_{1,0}, \beta_{0,1},$ and $\gamma_{i,j}$ equal $1$.
\end{proposition}

\subsection{The comultiplication of $g$}
The category $\Rep(H)$ can be identified with that of $F$-equivariant representations of $A$ as follows:
if $V \in \Rep(H)$, then $V \in \Rep(A)$ by restriction, and $\tilde{g}:V\rightarrow V, v \mapsto gv$ establishes an isomorphism between $V$ and $F(V)$. \smallskip

We now consider the regular representation of $H$. The following diagram should be commutative:
$$\xymatrix{H\otimes H \ar[r]^(0.45){\Delta(g)\cdot }\ar[rd]_(0.42){(g\otimes g)\cdot} & F(H\otimes H)\ar[d]^{\Omega \cdot} \\ & F(H)\otimes F(H)}$$
where $\Omega$ comes from the tensor structure of $F$. Since $g=g^{-1}$, we have:
$$\Delta(g)=(g\otimes g)\Omega.$$
For $V,W \in \Rep(A)$ the isomorphism $\theta_{V,W}:F(V\otimes W)\rightarrow F(V)\otimes F(W)$ is given by multiplication by $\Omega \in A\otimes A$.
The reason for this is the following: the isomorphism $\theta_{A,A}: A\ot A \simeq F(A\ot A) \to F(A)\ot F(A) \simeq A\ot A$ is natural, and hence it must commute with multiplication from the right by elements of $A\ot A$. So, it must be given by multiplication from the left by some element $\Omega \in A\ot A$. The same holds for $V,W \in \Rep(A)$ by the naturality of $\theta$ again with respect to any morphisms $A \rightarrow V$ and $A\rightarrow W$. Then, the computation of $\Omega$ can be derived from our knowledge of these isomorphisms for any two irreducible representations of $A$.
To do this, we first need the decomposition of the regular representation of $A$ as a direct sum of irreducible representations.
For $i,j=0,\ldots, p-1$ let $f_{ij} \in A_0$ denote the idempotent upon which $u_a$ acts by $\zeta^i$ and $u_b$ by $\zeta^j.$ It is:
\begin{equation}\label{idemfij}
f_{ij} = {\displaystyle \frac{1}{p^2}\sum_{k,l} \zeta^{-(ik+jl)}u_a^ku_b^l}.
\end{equation}
Let $V_{ij}=A_0f_{ij}$. Then $V_{ij} \simeq K^{i,j}$. Consider in $A_1$ the element
$$h_i= {\displaystyle \frac{1}{p}\sum_{k} \zeta^{-ik}v_a^k}.$$
Let $W_i$ be the subspace spanned by $v_b^lh_i$ for $l=0,\ldots, p-1.$ Then $W_i \simeq M$ by mapping $v_b^{l-i}h_i$ to $m_l$. Thus we have:
$$A=\Big(\bigoplus\limits_{i,j} V_{ij}\Big) \bigoplus \Big(\bigoplus\limits_{i} W_i\Big).$$
We claim that:
\begin{equation}\label{descripJ}
\begin{array}{ll}
\Omega  = & {\displaystyle \frac{1}{p^2}\sum_{i,j,k,l} \zeta^{kj-il} u_a^i u_b^j \otimes u_a^k u_b^l + \frac{1}{p}\sum_{k,l} \zeta^{-(k+l)k} u_a^k u_b^l \otimes v_a^{k+l} v_b^{k+l}} \vspace{3pt} \\
 & {\displaystyle + \frac{1}{p} \sum_{k,l} \zeta^{k(k+l)} v_a^{k+l}v_b^{-(k+l)} \otimes u_a^k u_b^l + \frac{1}{p} \sum_{k,l} v_a^k v_b^l \otimes v_a^{-l}v_b^k.}
\end{array}
\end{equation}
Using \eqref{thetaijkl}, \eqref{thetamkij}, \eqref{thetakijm}, and \eqref{thetamm}, this formula for $\Omega$ is proved by checking directly the following equalities,
which we leave to the reader: \vspace{1pt}
$$\begin{array}{l}
\theta_{V_{ij},V_{kl}}(f_{ij} \otimes f_{kl})=\zeta^{il-jk} f_{ij} \otimes f_{kl} = \Omega(f_{ij} \otimes f_{kl}), \vspace{5pt} \\
\theta_{V_{ij},M}(f_{ij} \otimes v_b^{k-l}h_l)=\zeta^{(i-j)(k+i)}f_{ij} \otimes v_b^{k+i-j-l}h_l = \Omega(f_{ij} \otimes v_b^{k-l}h_l), \vspace{5pt} \\
\theta_{M,V_{ij}}(v_b^{k-l}h_l \otimes f_{ij})=\zeta^{(j-i)(k+i)}v_b^{k+i-j-l}h_l \otimes f_{ij} = \Omega(v_b^{k-l}h_l \otimes f_{ij}), \vspace{-2pt}
\end{array}$$
$$\begin{array}{l}
\hspace{10pt} {\displaystyle \theta_{M,M}\Big(\sum_{k} \zeta^{-jk} v_b^{k-l}h_l \otimes v_b^{i-k-l}h_l\Big) = \sum_k \zeta^{-ik} v_b^{k-l}h_l \otimes v_b^{j-k-l}h_l} \vspace{1pt} \\
\hspace{6.55cm} = {\displaystyle \Omega \Big(\sum_{k} \zeta^{-jk} v_b^{k-l}h_l \otimes v_b^{i-k-l}h_l}\Big). \vspace{4pt}
\end{array}$$
A careful calculation reveals that $S(g)=g$. This finishes the description of the Hopf algebra structure of $H$ and hence the proof of Theorem \ref{structurenikshych}.

\begin{remark}\label{defoverqzeta}
Although we used that $K$ is algebraically closed to reconstruct $H$, a posteriori we see from Theorem \ref{structurenikshych} that $H$ is defined over $\Q(\zeta)$.
\end{remark}

\section{Duality, (co)characters, and Hopf automorphisms}
\setcounter{equation}{0}

In this section we study further the structure of $H$: we describe its irreducible (co)representations and (co)characters, its Hopf automorphisms and we show that it is self-dual. The description of the (co)characters is one of the essential points in the proof of our main result since they provide elements in any Hopf order in view of Proposition \ref{character}. We keep the notation of the previous section.

\subsection{Dual Hopf algebra}
We present here the Hopf algebra structure of $H^*$. As a vector space, $H=A_0\oplus A_1\oplus gA_0\oplus gA_1.$ We consider the following basis of $H$:
\begin{equation}\label{basisH}
\Bas:=\{u_a^iu_b^j\}\cup\{v_a^iv_b^j\}\cup\{gu_a^iu_b^j\}\cup\{gv_a^iv_b^j\}.
\end{equation}
We denote the dual basis by:
\begin{equation}\label{dualbasisH}
\Bas^*:=\{s_{ij}\}\cup\{t_{ij}\}\cup \{\alpha_{ij}\}\cup\{\beta_{ij}\}.
\end{equation}
From \eqref{comultA} and \eqref{comultg1}, we easily see that $H=A \oplus gA$ as a coalgebra. Then
\begin{equation}\label{decompHdual}
H^*=A^* \oplus (gA)^*
\end{equation}
as an algebra. We denote by $\varepsilon_A$ and $\varepsilon_{gA}$ the counit of $H$ restricted to $A$ and $gA$ respectively. Then, $\varepsilon_A$ and $\varepsilon_{gA}$ are the central idempotents of $H^*$ giving the previous decomposition. The following result provides the full description of $H^*$.

\begin{proposition}\label{Hdual}
As an algebra, $H^*$ is the direct sum of the algebras $A^*$ and $(gA)^*$. The algebra $A^*$ is spanned by the elements $s_{ij}$ and $t_{ij}$ and its multiplication is given by:
\begin{equation}\label{multrules1}
\begin{array}{ll}
s_{ij}\hspace{0.5pt}s_{kl} = \delta_{i,k}\delta_{j,l}\,s_{ij},  &  \hspace{25pt} t_{kl}\hspace{0.5pt}s_{ij} = \delta_{i,k}\delta_{j,-l}\,t_{kl}, \vspace{5pt} \\
s_{ij}\hspace{0.5pt}t_{kl} = \delta_{i,k}\delta_{j,l}\,t_{kl}, &  \hspace{25pt} t_{ij}\hspace{0.5pt}t_{kl} = \delta_{i,k}\delta_{j,-l}\,s_{ij}.
\end{array}
\end{equation}
The algebra $(gA)^*$ is generated by the elements $\gamma_{ij}$ and $B$ subject to the following relations:
\begin{equation}\label{multrules2}
B^2=\varepsilon_{gA}, \hspace{20pt} \gamma_{ij}\hspace{0.5pt}\gamma_{kl} = \zeta^{il-jk}\hspace{0.5pt}\gamma_{i+k\:j+l}, \hspace{20pt} \textrm{ and } \hspace{15pt} B\hspace{0.5pt}\gamma_{ij}=\gamma_{ij}\hspace{0.5pt}B.
\end{equation}
The comultiplication, counit, and antipode of $H^*$ are given by:
\begin{equation}\label{comultHdual}
\begin{array}{l}
\Delta(s_{ij}) = {\displaystyle \sum_{k,l} s_{kl}\otimes s_{i-k\:j-l} + \frac{1}{p^2}\zeta^{-(il+jk)}\gamma_{kl}\otimes\gamma_{lk},} \vspace{3pt} \\
\hspace{1mm}\varepsilon(s_{ij})=\delta_{i,0}\delta_{j,0},  \hspace{15mm} S(s_{ij})= s_{-i\,-j}, \vspace{10pt} \\
\Delta(t_{ij}) = {\displaystyle \sum_{k,l} \zeta^{l(k-i)}t_{kl} \otimes t_{i-k\:j-l} + \frac{1}{p^2}\zeta^{-il}\gamma_{kl}B \otimes \gamma_{l-j\:k}B}, \vspace{3pt} \\
\hspace{1mm}\varepsilon(t_{ij}) = \delta_{i,0}\delta_{j,0}, \hspace{15mm} S(t_{ij})=\zeta^{-ij}t_{-ij}, \vspace{10pt} \\
\Delta(\gamma_{ij}) = {\displaystyle \sum_{k,l} \zeta^{li+kj}s_{kl}\otimes \gamma_{ij} + \zeta^{ki+lj}\gamma_{ij}\otimes s_{kl},} \vspace{3pt} \\
\hspace{1.5mm}\varepsilon(\gamma_{ij}) = 0, \hspace{23mm} S(\gamma_{ij}) = \gamma_{-j\,-i}, \vspace{10pt} \\
\Delta(B) =  {\displaystyle \sum_{k,l} \zeta^{kl}\gamma_{l0}B \otimes t_{kl} + t_{kl} \otimes \gamma_{0\,-l}B,} \vspace{3pt} \\
\hspace{1mm}\varepsilon(B)=0, \hspace{25mm}  S(B) = B.
\end{array}
\end{equation}
(The operations in the indices are all done modulo $p$.)
\end{proposition}

\pf
From the dual basis $\Bas^*$ in \eqref{dualbasisH}, we are going to construct a new basis of $H^*$ which is more convenient to express the multiplication. In $(gA_0)^*$, instead of $\{\alpha_{ij}\}$ we take the dual basis of $\{gf_{ij}\}$, where $\{f_{ij}\}$ are the idempotents in \eqref{idemfij}. We denote this basis by $\{\gamma_{ij}\}$. Then:
$$\gamma_{ij}(gu_a^ku_b^l)=\zeta^{ik+jl}.$$
The $s_{ij}$'s and $t_{ij}$'s form a basis of $A^*$ and the $\beta_{ij}$'s and $\gamma_{ij}$'s form one of $(gA)^*$. A direct and tedious calculation yields the following formulas:
$$\begin{array}{ll}
s_{ij}\hspace{0.5pt}s_{kl} = \delta_{i,k}\delta_{j,l}\,s_{ij},  &  \hspace{20pt} t_{kl}\hspace{0.5pt}s_{ij} = \delta_{i,k}\delta_{j,-l}\,t_{kl}, \vspace{3pt} \\
s_{ij}\hspace{0.5pt}t_{kl} = \delta_{i,k}\delta_{j,l}\,t_{kl}, &  \hspace{20pt} t_{ij}\hspace{0.5pt}t_{kl} = \delta_{i,k}\delta_{j,-l}\,s_{ij},  \vspace{5pt} \\
\gamma_{ij}\hspace{0.5pt}\beta_{kl} = \zeta^{j(l+j+k-i)}\beta_{k-i+j\;l-i+j}, & \hspace{20pt} \beta_{kl}\hspace{0.5pt}\gamma_{ij}=\zeta^{j(i+k-j-l)}\beta_{k+i-j\;l+j-i}, \vspace{5pt} \\
\gamma_{ij}\hspace{0.5pt}\gamma_{kl} = \zeta^{il-jk}\gamma_{i+k\:j+l}. &
\end{array}$$
This gives the statement for the multiplication in $A^*$. For the one in $(gA)^*$ we proceed as follows: consider the element
\begin{equation}\label{elemB}
B=\sqrt{p}\sum_k \beta_{k0}.
\end{equation}
It commutes with the $\gamma_{ij}$'s in view of the above formulas. A simple computation shows that $B^2=\varepsilon_{gA}$. Each $\beta_{ij}$ can be expressed as
$$\beta_{ij} = \frac{1}{p\sqrt{p}}\sum_k \zeta^{-ki} \gamma_{k-j\: k}B.$$
This can be verified directly by using the equality:
\begin{equation}\label{gammaijB}
(\gamma_{ij}B)(gv_a^kv_b^l)=\sqrt{p}\,\zeta^{jk}\delta_{l,j-i}.
\end{equation}
Then $\{\gamma_{ij}\} \cup \{\gamma_{ij}B\}$ is a basis of $(gA)^*$. We change our basis of $H^*$ again to
\begin{equation}\label{basisL}
\mathcal{L}:=\{s_{ij}\}\cup\{t_{ij}\}\cup \{\gamma_{ij}\} \cup \{\gamma_{ij}B\}.
\end{equation}
The multiplication of $H^*$ is then fully described on $\mathcal{L}$ by \eqref{multrules1} and \eqref{multrules2}. \par \smallskip

We next compute the formulas for the comultiplication of $H^*$ given in \eqref{comultHdual}. These formulas follow from direct calculations, just using the multiplication in $H$. The calculations do not present any special difficulty.  We briefly indicate how to proceed for $s_{ij}$ and leave the details and the other cases to the reader. The element $s_{ij}$ vanishes on $A_1, gA_0$ and $gA_1$. Since $A_0A_0=(gA_0)(gA_0)=A_0$ and $A_0A_1=A_1A_0=0$ no other kind of summands can occur in the right-hand side. Hence it suffices to evaluate $\Delta(s_{ij})$ at $u_a^k u_b^l \otimes u_a^m u_b^n$ and $gf_{kl} \otimes gf_{mn}.$
The coefficients of $s_{kl} \otimes s_{mn}$ and $\gamma_{kl}\otimes \gamma_{mn} $ must be respectively:
$$\langle s_{ij},\! (u_a^k u_b^l)(u_a^m u_b^n) \rangle \! =\! \delta_{i,k+m}\delta_{j,l+n} \ \hspace{12pt} {\rm and} \ \hspace{12pt} \langle s_{ij}, \! (gf_{kl})(gf_{mn}) \rangle \! =\!
\frac{1}{p^2}\zeta^{-(il+jk)}\delta_{k,n}\delta_{l,m}.$$

Finally, one can check with no effort that the counit and antipode are the ones given in \eqref{comultHdual}.
\epf

\subsection{Self-duality}
Nikshych proved in \cite[Proposition 5.2]{N1} that $H$ and $H^*$ are isomorphic as algebras. In this subsection we strengthen this result by the following proposition:

\begin{proposition}\label{selfduality}
The Hopf algebras $H$ and $H^*$ are isomorphic.
\end{proposition}

\pf
Let us begin by finding inside $H^*$ a Hopf subalgebra isomorphic to $A$. Set $d=\frac{p+1}{2}$. Consider the elements:
\begin{equation}\label{elemHd}
\bar{u}_a=\sum_{k,l} \zeta^{(k+l)d} s_{kl}, \qquad \bar{u}_b=\sum_{k,l} \zeta^{(k-l)d}s_{kl},\qquad \bar{v}_a=\gamma_{dd},\qquad \bar{v}_b=\gamma_{-dd}.
\end{equation}
Let $\bar{A}$ be the subalgebra generated by $\bar{u}_a,\bar{u}_b,\bar{v}_a,$ and $\bar{v}_b$. Using the multiplication rules \eqref{multrules1} and \eqref{multrules2} one easily checks that the assignment
$u_x \mapsto \bar{u}_x, v_x \mapsto \bar{v}_x$ for $x \in \{a,b\}$ establishes an algebra isomorphism $\Psi$ between $A$ and $\bar{A}$.
The elements corresponding to the central idempotents $e_0$ and $e_1$ in Subsection \ref{algA} are
\begin{equation}\label{unitcounit}
\varepsilon_A=\sum_{k,l} s_{kl} \hspace{20pt} \textrm{and} \hspace{20pt} \varepsilon_{gA}=\gamma_{00}.
\end{equation}
Notice that $\varepsilon_A+\varepsilon_{gA}=\varepsilon_H=1_{H^*}$. Using formulas \eqref{comultHdual} one can verify with a long but direct computation that the above isomorphism is actually an isomorphism of Hopf algebras. \par \smallskip

Consider finally the element
$$\bar{g}=B+\sum_{k,l} \zeta^{dkl}t_{kl}.$$
It can be shown that $\bar{g}^2=1_{H^*}$, conjugation by $\bar{g}$ stabilizes $\bar{A}$, and, by the above isomorphism, $\bar{g}$ acts on $\bar{A}$ as $g$ acts on $A$.
Moreover, one can show that $\Psi$ extends to a Hopf algebra isomorphism from $H$ to $H^*$ by defining $g \mapsto \bar{g}$. This finishes the proof. \epf

\begin{remark}
If $p=1$ mod. $4$, then $\sqrt{p} \in \Q(\zeta)$ and the above isomorphism is defined over $\Q(\zeta)$. Otherwise, it is not defined over $\Q(\zeta)$ but over $\Q(\zeta,\omega)$,
with $\omega$ a primitive fourth root of unity, and maps $B$ to $\omega B$. Consider $H$ as defined over $\Q(\zeta)$.
Then $B$ belongs to $H \otimes_{\Q(\zeta)} K$ but not to $H$ because $\sqrt{p} \notin \Q(\zeta)$ in this  case.
In fact, since the orbit of $B$ under the group of Hopf automorphisms of $H$ is $\{B,-B\}$, see Subsection \ref{Hopfaut},
it follows that an isomorphism between $H$ and $H^*$ cannot be defined over $\Q(\zeta)$.
The Hopf algebra $H^*$ will be a form of $H$ but not isomorphic to it over $\Q(\zeta)$.
\end{remark}

In the next two subsections we describe the irreducible representations of $H$ and $H^*$ and their characters, see \cite[Proposition 5.2]{N1},
which will be used to find the possible Hopf orders of $H$. \par \smallskip

\subsection{Characters of $H$}
We have the following irreducible representations of $H$ and corresponding characters:

\subsubsection{Dimension 1} There are $2p$ irreducible representations of $H$ of dimension $1$. They arise from the elements in $A_0$ that are $g$-invariant. For $i=0,\ldots,p-1$ we have the representation $V_i^{+}$ (resp. $V_i^{-}$), upon which $A_1$ acts trivially, $u_a^k u_b^l$ acts through the scalar $\zeta^{(k+l)i}$, and $g$ acts as $1$ (resp. $-1$). By using the previously chosen basis $\mathcal{L}$ of $H^*$ (see Equation \ref{basisL}) we can write the characters of these representations as:
\begin{equation}\label{Hdim1}
\chi_{V_i^{\pm}} = {\displaystyle \pm \gamma_{ii}+\sum_{k,l} \zeta^{(k+l)i} s_{kl}.}
\end{equation}

\subsubsection{Dimension 2} The irreducible representations of $H$ of dimension $2$ come from the $1$-dimensional representations of $A_0$ which are not $g$-invariant. Therefore, their orbits have two elements: $K^{i,j}$ and $K^{j,i}$ for $i\neq j$. Such representations are parameterized by pairs $(i,j)$ with $i<j$. We denote them by $W_{ij}$. There are $\frac{p(p-1)}{2}$ such representations. The elements $g$ and $u_a^k u_b^l$ act on $W_{ij}$ as the matrices
$$\left(\begin{array}{ccc}
0 & 1\\
1 & 0\end{array}\right)\quad \textrm{and} \quad
\left(\begin{array}{ccc}
\zeta^{ik+jl} & 0 \\
0 & \zeta^{il+jk}\end{array}\right)
$$
respectively, and $A_1$ acts trivially. The associated characters with respect to the basis $\mathcal{L}$ of $H^*$ are:
\begin{equation}\label{Hdim2}
\chi_{W_{ij}^{\phantom{+}}} = {\displaystyle \sum_{k,l} (\zeta^{ik+jl}+\zeta^{il+jk})s_{kl}.}
\end{equation}

\subsubsection{Dimension p} Finally, there are two irreducible representations of $H$ of dimension $p$. They arise from the $p$-dimensional representation $M$ of $A_1$, see Subsection \ref{IrrA}. We denote them by $M^+$ and $M^-$. They have basis $\{m_0,\ldots ,m_{p-1}\}$, the elements in $A_1$ act as $v_am_i =\zeta^i m_i, v_b m_i = m_{i+1}$ and $g$ acts as $\pm 1$. The elements of $A_0$ act trivially. The corresponding characters in the basis $\mathcal{L}$ of $H^*$ are:

\begin{equation}\label{Hdimp}
\chi_{M^{\pm}} = {\displaystyle pt_{00} \pm \frac{1}{\sqrt{p}}\sum_{i} \gamma_{ii} B.}
\end{equation}

\subsection{Characters of $H^*$}
To describe the irreducible representations of $H^*$ we will use the decomposition \eqref{decompHdual} expressing $H^*$ as the direct sum of algebras
$H^*=A^* \oplus (gA)^*$. We start with the irreducible representations of $A^*$. By the multiplication rules \eqref{multrules1}, $A^*$ is the direct sum of algebras
$$A^*= \Big(\bigoplus_{i} R_i\Big) \bigoplus \Big(\bigoplus_{i,j} R_{i,j}\Big),$$
where $R_i$ is spanned by $s_{i0}$ and $t_{i0}$ and $R_{ij}$ by $s_{ij},s_{i\,-j},t_{ij},t_{i\,-j}$.
The index $i$ runs from $0$ to $p-1$ and $j$ from $1$ to $\frac{p-1}{2}$ to avoid repetitions. \par \smallskip

\subsubsection{Dimension 1}\label{dim1} The algebra $R_i$ has two 1-dimensional representations, on both of which $s_{i0}$ acts as $1$ whereas $t_{i0}$ acts as $\pm 1$. We denote them by $L_i^{+}$ and $L_i^{-}$ respectively. The characters of these representations, expressed in the basis $\Bas$ of $H$, see Equation \ref{basisH}, are:
\begin{equation}\label{charHdual1}
\psi_{L_i^{\pm}} = {\displaystyle u_a^i \pm v_a^i.}
\end{equation}

\subsubsection{Dimension 2}\label{dim2} The algebra $R_{ij}$ is isomorphic to ${\rm M}_2(K)$. Therefore, it has one irreducible 2-dimensional representation, which we denote by $P_{ij}$. This representation is given by the following map:
\begin{equation*} s_{ij}\mapsto
\left(\begin{array}{ccc}
1 & 0\\
0 & 0 \end{array}\right), \hspace{10pt}
s_{i\: -j}\mapsto
\left(\begin{array}{ccc}
0 & 0\\
0 & 1 \end{array}\right), \hspace{10pt}
t_{ij} \mapsto
\left(\begin{array}{ccc}
0 & 1\\
0 & 0 \end{array}\right), \hspace{10pt}
t_{i\: -j}\mapsto
\left(\begin{array}{ccc}
0 & 0\\
1 & 0 \end{array}\right).
\end{equation*}
In the basis $\Bas$ of $H$ the characters of these representations are expressed as:
\begin{equation}\label{charHdual2}
\psi_{P_{ij}} = {\displaystyle u_a^iu_b^j + u_a^iu_b^{-j}.}
\end{equation}

\subsubsection{Dimension p}\label{dimp} Lastly, we discuss the irreducible representations of $(gA)^*$. Since $B^2=\varepsilon|_{gA}=1_{(gA)^*}$, we have the following two central idempotents:
$$\kappa=\frac{1}{2}(\varepsilon|_{gA}+B)\quad \textrm{and} \quad \kappa'=\frac{1}{2}(\varepsilon|_{gA}-B).$$
They induce the algebra decomposition
$(gA)^*=(gA)^*\kappa \oplus (gA)^*\kappa'$. From \eqref{multrules2} we obtain
$\gamma_{10}^p=\gamma_{01}^p=\varepsilon_{gA}$ and $\gamma_{10}\gamma_{01}=\zeta^2\gamma_{01}\gamma_{10}.$
Then $(gA)^*\kappa$ and $(gA)^*\kappa'$ are isomorphic to ${\rm M}_p(K)$.
Hence $(gA)^*$ has two $p$-dimensional irreducible representations,
which we denote by $N^+$ and $N^-$. Both have a basis $\{n_0,\ldots, n_{p-1}\}$ with actions
$$\gamma_{ij}n_l = \zeta^{ij+2il}n_{l+j},\qquad Bn_l=\pm n_l.$$

The characters of the above representations are given by:
\begin{equation}\label{charHdualp}
\psi_{N^{\pm}} = {\displaystyle \frac{1}{p}\sum_{i,j} gu_a^i u_b^j {\pm} \frac{1}{\sqrt{p}}\sum_i gv_a^i.}
\end{equation}

\subsection{Hopf automorphisms}\label{Hopfaut}
The group of Hopf automorphisms of $H$ is described by the following result:

\begin{proposition}
The group $Aut_{\scalebox{0.65}{Hopf}}\hspace{1pt}(H)$ is isomorphic to $C_2\times (C_2\ltimes C_p)$. Writing $C_2$ as $\{\pm 1\}$, the Hopf automorphism $\phi$ of $H$ corresponding to the triple $(\epsilon_1,\epsilon_2,t)$ is:
$$\begin{array}{lll}
\phi(u_a) = u_a^{\epsilon_2},  & \hspace{8mm} \phi(u_b) = u_b^{\epsilon_2},           &   \vspace{6pt}\\
\phi(v_a) = v_a^{\epsilon_2},  & \hspace{8mm} \phi(v_b) = \zeta^{t} v_b^{\epsilon_2}, &  \hspace{8mm} \phi(g)=g(e_0+\epsilon_1e_1).
\end{array}$$
\end{proposition}

\pf
We know from \eqref{comultA} and \eqref{comultg1} that $H=A \oplus gA$ as coalgebras and hence $H^*=A^* \oplus (gA)^*$ as algebras. The algebra $A^*$ splits as a direct sum of matrix algebras over $K$ of dimension $1$ or $4$ (Subsections \ref{dim1} and \ref{dim2}). On the other hand, the algebra $(gA)^*$ is the direct sum of two matrix algebras of dimension $p^2$ (Subsection \ref{dimp}). Let $\sigma \in Aut_{\scalebox{0.65}{{\it Hopf}}}\hspace{1pt}(H)$. Since $\sigma$ must preserve the Wedderburn decomposition of $H^*$, it must hold that $\sigma(A) \subseteq A$. Thus $\sigma\vert_A$ is a Hopf automorphism of $A$. We are so led to compute $Aut_{\scalebox{0.65}{{\it Hopf}}}\hspace{1pt}(A)$. This gives a group morphism
$$\Theta:Aut_{\scalebox{0.65}{{\it Hopf}}}\hspace{1pt}(H)\rightarrow Aut_{\scalebox{0.65}{{\it Hopf}}}\hspace{1pt}(A),\ \sigma \mapsto \sigma \vert_A.$$
Using this morphism, we are going to compute $Aut_{\scalebox{0.65}{{\it Hopf}}}\hspace{1pt}(H)$ in two steps: \par \medskip

{\it Step 1. Hopf automorphisms of $A$.} We know from Subsection \ref{algA} that $A$ has an algebra decomposition $A=A_0 \oplus A_1,$ where $A_0=K(C_p\times C_p)$ and $A_1=K^c(C_p\times C_p)$. Considering, as before, the dimensions of the simple components of the Wedderburn decomposition of $A_0$ and $A_1$ we get $\sigma(A_0)=A_0$ and $\sigma(A_1)=A_1$.
The group-like elements of $A$ are $u_a^i\pm v_a^i$ with $0 \leq i<p.$ Since $\sigma$ preserves group-like elements and the relations
$u_a^p=e_0$ and $v_a^p=e_1$, we must have $\sigma(u_a+v_a) = u_a^r + v_a^r$ for some
$r \neq 0$. As $\sigma(u_a)\in A_0$ and $\sigma(v_a) \in A_1$, we obtain
\begin{equation}\label{sigmauava}
\sigma(u_a) = u_a^r \hspace{8mm} \textrm{and} \hspace{8mm} \sigma(v_a) = v_a^r.
\end{equation}
On the other hand, $\sigma(u_b)=u_a^k u_b^s$ for some $k,s \neq 0$ because $\sigma$ induces a Hopf automorphism on the quotient Hopf algebra $A_0$ of $A$.
We derive that $k=0$ from the equality $\mu\Delta\sigma(u_b)=\sigma\mu\Delta(u_b)$. Here $\mu$ stands for the multiplication of $H$. So $\sigma(u_b)=u_b^s$.
Using the equality $\Delta\sigma(u_b)=(\sigma \otimes \sigma)\Delta(u_b)$ we arrive to $\sigma(v_b)=\lambda v_b^s$ for some $\lambda \in K^{\times}$.
Moreover, $\lambda^p=1$ because $\sigma(v_b)^p=e_1$. Put $\lambda=\zeta^t$ with $0 \leq t <p$.
Applying $\sigma$ to the relation $v_av_b = \zeta v_bv_a$ we get $sr=1$ mod.\! $p$. \linebreak Then
\begin{equation}\label{sigmaubvb}
\sigma(u_b) = u_b^s \hspace{8mm}  \textrm{and} \hspace{8mm}  \sigma(v_b) = \zeta^{t} v_b^s, \ \textrm{with} \ s=r^{-1}\ \textrm{mod.} \ p.
\end{equation}
Thus $\sigma$ determines a pair $(r,t) \in C_p^{\times} \times C_p.$
Conversely, one can check that any such a pair together with \eqref{sigmauava} and \eqref{sigmaubvb} defines a Hopf automorphism of $A$.
Finally, by composing two automorphisms one sees that $Aut_{\scalebox{0.65}{{\it Hopf}}}\hspace{1pt}(A) \simeq C_p^{\times} \ltimes C_p$.  \par \medskip

{\it Step 2. Computing the kernel and image of $\Theta$.} We claim that $\Ker \Theta \simeq C_2$. Let $\nu \in \Ker \Theta$. We know that $H$ has a coalgebra decomposition $H=A \oplus gA,$ that $\nu$ must preserve. Then $\nu(g) = gz$ for some $z\in A$. Since $\nu \vert_A =id_A$, we have for every $x\in A$:
$$gxg^{-1} = \nu(gxg^{-1}) = gzxz^{-1}g^{-1}.$$
From this it follows that $z \in Z(A)$. Recall that $\Delta(g)=(g\otimes g)\Omega$, where $\Omega$ is given in Equation \ref{descripJ}. Using this and that $\nu$ is a coalgebra map we get:
$$(g\otimes g)\Omega\Delta(z)=\Delta(gz)=\Delta\nu(g)=(\nu\otimes\nu)\Delta(g)=(gz\otimes gz)\Omega.$$
We also used here that $\Omega \in A \otimes A$ and $\nu \vert_A =id_A$.
Since $z \in Z(A)$ and $\Omega$ and $g$ are invertible, the above equality implies that $z$ is a group-like element of $A$.
As $1=\nu(g)^2=gzgz$, the only nontrivial option is $z=e_0-e_1$. Conversely, one can easily check that a map of this form defines an element of order $2$ in $\Ker \Theta$. \par \smallskip

We claim now that $\Ima \Theta \simeq C_2 \ltimes C_p$. Let $\sigma \in \Ima \Theta.$
Assume that $\sigma$ is given by $(r,t) \in C_p^{\times} \ltimes C_p$ and equations \eqref{sigmauava} and \eqref{sigmaubvb}.
Then, arguing as before, $\sigma(g)=gz$ for some $z\in A$. We have:
$$u_b^{r^{-1}}=\sigma(u_b)=\sigma(gu_ag^{-1})=gzu_a^{r}z^{-1}g^{-1}=u_b^{r}.$$
From this, $r^2 = 1$ mod. $p$ and so $r=\pm 1.$ Conversely, the Hopf automorphism $\tau$ of $A$ corresponding to $(1,t)$ is given by conjugation by the group-like element $u_a^t+v_a^t$.
Conjugation by the same element defines $\bar{\tau} \in Aut_{\scalebox{0.65}{{\it Hopf}}}\hspace{1pt}(H)$ such that $\Theta(\bar{\tau})=\tau$. Let $\varphi \in Aut_{\scalebox{0.65}{{\it Hopf}}}\hspace{1pt}(A)$ be corresponding to $(-1,0)$.
One can check effortless that $\varphi \in \Ima \Theta$ with preimage $\bar{\varphi}$ defined by
$\bar{\varphi}\vert_A =\varphi$ and $\bar{\varphi}(g)=g$. \par \smallskip

Thus we have a short exact sequence
$$1\rightarrow C_2 \rightarrow Aut_{\scalebox{0.65}{{\it Hopf}}}\hspace{1pt}(H) \rightarrow C_2 \ltimes C_p \rightarrow 1.$$
This sequence splits because $\bar{\varphi}$ has order $2$. The action on $C_2$ is trivial (this is the only possible action), and then
$$Aut_{\scalebox{0.65}{{\it Hopf}}}\hspace{1pt}(H) \simeq C_2 \times (C_2 \ltimes C_p).$$
\epf

\section{Orders of Nikshych's Hopf algebra}\label{Nik}
\setcounter{equation}{0}

In this section we will use the results of the previous sections to classify the orders of Nikshych's Hopf algebra.
We will see that Nikshych's Hopf algebra admits at most one order over any number field. \par \smallskip

We keep the conventions and notations of Section \ref{Nikshych}: $\zeta$ is a primitive $p$-th root of unity; $K$ is a number field containing $\zeta$; $R=\Oint_K$ is the ring of integers of $K$; $H$ denotes Nikshych's Hopf algebra of dimension $4p^2$, and $A$ stands for Masuoka's Hopf algebra of dimension $2p^2$, both defined over $K$. \par \smallskip

Recall from Remark \ref{defoverqzeta} that $H$ is defined over $\Q(\zeta)$. However, we will prove here that $H$ does not have orders over $\Oint_{\Q(\zeta)}$, but only over the ring of integers of some extension of $\Q(\zeta)$. Set $K=\Q(\zeta,\omega),$ where $\omega$ is a primitive fourth root of unity. The field $\Q(\zeta)$ contains either $\sqrt{p}$ or $\sqrt{-p}$, depending on the value of $p$ mod. $4$. The existence of $\omega$ allows us to assume that $\sqrt{p} \in K$ and treat our computations in a unified way avoiding the distinction of cases. \par \smallskip

The proof of Theorem \ref{thintro2} is quite involved. We will divide it into several parts.

\subsection{Elements that must be in any Hopf order}
Suppose that $X$ is a Hopf order of $H$ over $R$. Our goal in this first part is to prove that several elements of $H$, arising from (co)characters, must belong to $X$. This will be used later to show that all basis elements of $H$, given in \eqref{basisH}, must be in $X$. \par \smallskip

We retain the notation of Section \ref{Nikshych}: $e_0,e_1$ are the units of $A_0$ and $A_1$ and $\varepsilon_A,\varepsilon_{gA}$ denote the counits of $A$ and $gA$ respectively. We start with the following:

\begin{lemma}\label{e0e1}
The elements $e_0,e_1$ are in $X$ and $\varepsilon_A,\varepsilon_{gA}$ are in $X^{\star}$.
\end{lemma}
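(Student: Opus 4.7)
The plan is twofold: first extract approximate forms of the idempotents from Proposition \ref{character} applied to the character lists \eqref{charH} and \eqref{charHdual}, then refine to the actual idempotents using the Hopf-subalgebra structure.

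First, I will apply Proposition \ref{character} to the characters of $H^{\ast}$ from \eqref{charHdual}: the trivial character $\psi_{L_0^+}=u_a^0+v_a^0=e_0+e_1=1_H$ gives $1\in X$, while $\psi_{L_0^-}=e_0-e_1$ gives $e_0-e_1\in X$, so $2e_0,2e_1\in X$. By the symmetric argument applied to the characters of $H$ from \eqref{charH}, the trivial character $\chi_{V_0^+}=\varepsilon_A+\varepsilon_{gA}=1_{H^{\ast}}$ and $\chi_{V_0^-}=\varepsilon_A-\varepsilon_{gA}$ lie in $X^{\star}$, hence $2\varepsilon_A,2\varepsilon_{gA}\in X^{\star}$.

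Next, I will remove the factor of $2$. Since $e_0-e_1$ is a group-like of order $2$ in $H$ (its square equals $e_0+e_1=1$), the subalgebra $B:=K\langle e_0-e_1\rangle\cong KC_2$ is a Hopf subalgebra of $H$, whose primitive idempotents are precisely $e_0$ and $e_1$. By Proposition \ref{subsquo}(iii), $X\cap B$ is a Hopf order of $B$, and by the Tate--Oort classification (the $p=2$ analogue of Section \ref{ordercp}), $X\cap B$ contains the primitive idempotents $e_0,e_1$ if and only if it is the maximal Hopf order $Re_0\oplus Re_1$. The same reduction via $K\langle\chi_{V_0^-}\rangle\cong KC_2\subset H^{\ast}$ applies dually to $\varepsilon_A,\varepsilon_{gA}\in X^{\star}$.

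The main obstacle will be proving the required maximality of these two Hopf orders of $KC_2$. At each prime $\mathfrak{p}$ of $R=\Oint_K$ not lying over $2$, the conclusion is automatic: $2$ is then a unit in $R_{\mathfrak{p}}$, and $2e_0\in X$ yields $e_0\in X_{\mathfrak{p}}$. At primes lying over $2$, I expect to exploit the additional elements of $X$ and $X^{\star}$ coming from the remaining characters $\chi_{M^{\pm}}$ and $\psi_{N^{\pm}}$, whose $\frac{1}{\sqrt{p}}$-coefficients (together with the special element $B$ of Subsection \ref{selfduality} satisfying $B^2=\varepsilon_{gA}$) produce elements that, when combined with the dimension constraint $\varepsilon(\Lambda_X)\varepsilon(\Lambda_{X^{\star}})=(4p^2)$ from Proposition \ref{Larson1} and with the fact that $p$ is odd, forbid every intermediate Tate--Oort order of $KC_2$. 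This local analysis at primes above $2$ is where the delicate work sits.
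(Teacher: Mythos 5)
Your first two steps are fine: $\psi_{L_0^-}=e_0-e_1\in X$ and $\chi_{V_0^-}=\varepsilon_A-\varepsilon_{gA}\in X^{\star}$ by Proposition \ref{character}, $e_0-e_1$ is indeed group-like, and the problem correctly reduces to showing that the Hopf order $X\cap K\langle e_0-e_1\rangle$ of $KC_2$ is the maximal one $Re_0\oplus Re_1$ (equivalently, to a statement at the primes of $R$ lying over $2$). But that reduction is the entire content of the lemma, and at that point your argument stops: the final paragraph is an expectation (``I expect to exploit\dots''), not a proof. Nothing you list is shown to exclude the case where $X\cap K\langle e_0-e_1\rangle$ is the group ring $R[e_0-e_1]$ locally at $2$. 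In particular, Proposition \ref{Larson1} cannot do this job on its own: the unique Hopf order $Y$ constructed later satisfies $\varepsilon(\Lambda_Y)=(2p)$ and $\varepsilon(\Lambda_{Y^{\star}})=(2p)$, so the constraint $\varepsilon(\Lambda_X)\varepsilon(\Lambda_{X^{\star}})=(4p^2)$ is perfectly compatible with both $\varepsilon$-ideals being divisible by $2$, and it says nothing direct about whether the integral $e_0$ of the $C_2$-suborder has unit counit ideal. Likewise, the elements $\chi_{M^{\pm}}$, $\psi_{N^{\pm}}$ and $B$ are not yet usable in the way the paper uses them, because the paper's own manipulations with these elements (Lemma \ref{ge1B} and onward) multiply by $e_0$ and $\varepsilon_{gA}$, i.e.\ they presuppose the present lemma.

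The paper's mechanism is genuinely different and is what fills this gap: it works inside the Hopf subalgebra $H_b$ generated by $u_b,v_b$, identifies $H_b^{*}\simeq K(C_2\ltimes C_p)$ (a nonabelian group algebra of order $2p$), and invokes the proof of \cite[Proposition 2.1]{CM} to conclude that the Hopf order $X\cap H_b$ of $(K(C_2\ltimes C_p))^{*}$ contains the coset idempotents $t_0,t_1$, which are then computed to equal $e_0,e_1$. It is the interaction of the $2$-part with the $p$-part in this nonabelian group, not any property of the bare $KC_2$ generated by $e_0-e_1$, that forces the idempotents into $X$; your proposal has no substitute for this step. (The second half, deducing $\varepsilon_A,\varepsilon_{gA}\in X^{\star}$ via the copy $\bar A\subset H^{*}$ and self-duality, matches the paper's route once the first half is in place.) So as it stands the proposal has a genuine gap at its central point.
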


\pf We first show that $e_0,e_1 \in X$. The subalgebra $H_b$ of $H$ generated by $u_b$ and $v_b$ is a Hopf subalgebra.
Consider the algebra maps $\sigma:H_b \rightarrow K, u_b \mapsto \zeta, v_b \mapsto 0$ and $\tau:H_b \rightarrow K, u_b \mapsto 0, v_b \mapsto \zeta$.
They are group-like elements of $H_b^*$ and  $\sigma^2=\tau^p=1$ and $\sigma\tau=\tau^{p-1}\sigma$.
Then $H_b^* \simeq K(C_2 \ltimes C_p)$ as Hopf algebras and $X \cap H_b$ may be viewed as a Hopf order of $K(C_2 \ltimes C_p)^*$ by Proposition \ref{subsquo}(iii). According to the proof of \cite[Proposition 2.1]{CM}, $X \cap H_b$ contains the idempotents $t_0,t_1$ (notation as there).
Let $\{\nu_{\sigma^i\tau^j}\}_{i,j} \subset K(C_2 \ltimes C_p)^*$ be the dual basis of $\{\sigma^i\tau^j\}_{i,j}$.
Recall that $t_0=\sum_j \nu_{\tau^j}$ and $t_1=\sum_j \nu_{\sigma\tau^j}$.
One can verify directly\vspace{-1pt} that $\nu_{\tau^j}=\frac{1}{p}\sum_k \zeta^{-jk}u_b^k$ \linebreak and $\nu_{\sigma\tau^j}=\frac{1}{p}\sum_k \zeta^{(j-1)k} v_b^k$. Then $t_0=e_0$ and $t_1=e_1$. \par \smallskip

For the second statement, take into account that $H$ is self-dual by Proposition \ref{selfduality}. The isomorphism between $H$ and $H^*$ established there maps $e_0,e_1$ to $\varepsilon_A,\varepsilon_{gA}$ respectively, see \eqref{unitcounit}. We now get that $\varepsilon_A,\varepsilon_{gA}\in X^{\star}$ from self-duality of $H$, the above fact, and the first statement applied to $X^{\star}$ and $H^*$. \epf

Recall from \eqref{elemB} the element $B$ used in describing $H^*$.

\begin{lemma}\label{ge1B}
The elements $ge_1$ and $B$ belong to $X$ and $X^{\star}$ respectively.
\end{lemma}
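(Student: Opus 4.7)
The strategy is to establish $ge_1 \in X$ using the characters of $H^*$, and then to deduce $B \in X^{\star}$ via the self-duality $\Psi$ of Subsection \ref{selfduality}.

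By Proposition \ref{character}, the characters $\psi_{N^{\pm}}$ from \eqref{charHdual} lie in $X$. Their sum and difference yield $p\,gf_{00}\in X$ and $\sqrt{p}\,gh_0\in X$, where $f_{00}\in A_0$ and $h_0\in A_1$ are the primitive idempotents of Section \ref{Nikshych}. Together with $e_0,e_1\in X$ (Lemma \ref{e0e1}) and the group-likes $u_a^i \pm v_a^i \in X$ (which yield $u_a^i, v_a^i \in X$ after multiplying by $e_0$ and $e_1$), the task is to produce $ge_1 = \sum_i gh_i$ through products and sums.

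The main obstacle will be the cancellation of the factor $\sqrt p$. The element $\sqrt{p}\,gh_0$ is left essentially invariant (up to scalars in $R$) under multiplication by the available elements of $X$, and since $\sqrt p$ is not a unit in $R$, one cannot extract $gh_0$ alone by scalar manipulation. The resolution should come from additional matrix coefficients of the irreducible representation $N^+$ of $H^*$: since $X^{\star}$ preserves some $R$-form of $N^+$, those coefficients belong to $X = X^{\star\star}$ by Proposition \ref{subsquo}(ii), and a suitable $R$-linear combination of them together with the trace $\psi_{N^+}$ should yield $g e_1$.

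Once $g e_1 \in X$ is established for an arbitrary Hopf order, $B \in X^{\star}$ follows by self-duality. A direct calculation with \eqref{multrules} and the formulas for $\bar g$ and $\varepsilon_{gA}$ shows $\Psi(g e_1) = \bar g \cdot \varepsilon_{gA} = B$: the $A^*$-component of $\bar g$ is annihilated by $\gamma_{00}$ (since $(gA)^* \cdot A^* = 0$), while $\gamma_{00} B = B$. Applying the first statement to the Hopf order $Y := \Psi^{-1}(X^{\star})$ of $H$ gives $g e_1 \in Y$, whence $B = \Psi(g e_1) \in \Psi(Y) = X^{\star}$.
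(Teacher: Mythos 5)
Your opening moves are fine and agree with the paper: by Proposition \ref{character} and Lemma \ref{e0e1} you get $e_0\psi_{N^+}=\frac1p\sum_{i,j}gu_a^iu_b^j\in X$ and $e_1\psi_{N^+}=\frac{1}{\sqrt p}\sum_i gv_a^i\in X$, and you correctly identify the real obstacle, namely cancelling the factor $\sqrt p$. But your proposed resolution is where the proof breaks down. It is true that matrix coefficients of $N^+$ taken with respect to an $X^{\star}$-stable lattice and its dual basis land in $X^{\star\star}\simeq X$ (Proposition \ref{subsquo}(ii)); however, that lattice is completely unknown, so these coefficients are unidentified elements of $X$, and you give no argument that $ge_1$ lies in their $R$-span together with the characters. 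The sentence ``a suitable $R$-linear combination of them together with the trace $\psi_{N^+}$ should yield $ge_1$'' is exactly the statement that needs proof, and nothing in the proposal controls which elements of $H$ these coefficients actually are (over $K$ they span the right space, but the required integrality over $R$ depends on the lattice). So the central step of the lemma is not established.

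The paper's proof uses an ingredient you never touch: the comultiplication, together with a character of $H$ (not only characters of $H^*$). From Lemma \ref{e0e1} one also has $\Gamma_2:=\varepsilon_{gA}\chi_{M^+}=\frac{1}{\sqrt p}\sum_k\gamma_{kk}B\in X^{\star}$, and with $\Gamma_1:=e_0\psi_{N^+}\in X$ one observes that $(\Gamma_2\otimes_R \id)\Delta(\Gamma_1)\in X$, since $\Delta(\Gamma_1)\in X\otimes_R X$ and $\Gamma_2$ maps $X$ to $R$. A direct computation of the $gA_1\otimes gA_1$ component of $\Delta(\Gamma_1)$ shows that this element is exactly $ge_1$; the $\frac1p$ in $\Gamma_1$ and the $\frac{1}{\sqrt p}$ in $\Gamma_2$ combine so that no $\sqrt p$ survives. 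Your final step, deducing $B\in X^{\star}$ by applying the first statement to the Hopf order $\Psi^{-1}(X^{\star})$ and checking $\Psi(ge_1)=\bar g\,\gamma_{00}=B$, is correct and coincides with the paper's use of self-duality (Subsection \ref{selfduality}), but it rests on the first statement holding for arbitrary Hopf orders, so it inherits the gap above.
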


\pf
We first prove that $ge_1 \in X.$ We know from Proposition \ref{character} that characters of $H^*$ are in $X$ and characters of $H$ are in $X^{\star}$.
Using the previous lemma, \eqref{charHdualp} and \eqref{Hdimp} we obtain that
\begin{align}
\Gamma_1 := e_0 \psi_{N^+} = {\displaystyle  \frac{1}{p} \sum_{i,j} gu_a^iu_b^j} \in X, \notag \hspace{13pt} \\
\Gamma_2 := \varepsilon_{gA}\chi_{M^+} = {\displaystyle \frac{1}{\sqrt{p}}\sum_{k} \gamma_{kk}B}\in X^{\star}. \label{elemW2}
\end{align}
Then $(\Gamma_2\otimes_R id_X)\Delta(\Gamma_1) \in X$. We check that $(\Gamma_2\otimes_R id_X)\Delta(\Gamma_1)=ge_1.$ Recall that $\Gamma_2$ vanishes on $A_0 \oplus A_1 \oplus gA_0$, so we only need to compute the part of $\Delta(\Gamma_1)$ in $gA_1 \otimes gA_1$. It is:
$$\begin{array}{l}
{\displaystyle \frac{1}{p^2}\sum_{i,j,k,l} (gv_a^kv_b^l \otimes gv_a^{-l}v_b^k)(v_a^iv_b^j\otimes v_a^iv_b^{-j})} \vspace{1pt} \\
\hspace{1.2cm} = {\displaystyle \frac{1}{p^2}\sum_{i,j,k,l} \zeta^{-i(k+l)}gv_a^{k+i}v_b^{l+j} \otimes gv_a^{i-l}v_b^{k-j}} \vspace{2pt} \\
\hspace{1.2cm} = {\displaystyle \frac{1}{p^2}\sum_{i',j',k,l'} \zeta^{(l'-i')(i'-k)}gv_a^{i'}v_b^{j'} \otimes gv_a^{l'}v_b^{i'-j'-l'}} \\
\hspace{6cm} {\small \textrm{putting} \ i'=k+i, j'=l+j,\ \textrm{and}\ l'=i-l,} \\
\hspace{1.2cm} = {\displaystyle \frac{1}{p}\sum_{i,j,l} \zeta^{(l-i)i}\Big(\frac{1}{p}\sum_{k} \zeta^{(i-l)k}\Big)gv_a^{i}v_b^{j} \otimes gv_a^{l}v_b^{i-j-l}} \\
\hspace{6cm} {\small \textrm{putting} \ i=i', j=j',\ \textrm{and}\ l=l',} \\
\hspace{1.2cm} = {\displaystyle \frac{1}{p}\sum_{i,j} gv_a^{i}v_b^{j} \otimes gv_a^{i}v_b^{-j}}. \vspace{3pt} \\
\end{array}$$
Applying $\Gamma_2 \otimes_R id_X$ to this expression we get
$$\begin{array}{ll}
{\displaystyle \frac{1}{p\sqrt{p}} \sum_{i,j,k} (\gamma_{kk}B)(gv_a^iv_b^j)gv_a^{i}v_b^{-j}} & \stackrel{\eqref{gammaijB}}{=} \ {\displaystyle \frac{1}{p}\sum_{i,k} \zeta^{ik} gv_a^i} \vspace{3pt} \\
 & \hspace{7pt} = \hspace{3pt} {\displaystyle \sum_{i} \Big(\frac{1}{p}\sum_{k}\zeta^{ik}\Big) gv_a^i} \vspace{3pt} \\
 & \hspace{7pt} = \hspace{3pt} ge_1.
\end{array}$$
Therefore $ge_1 \in X$. \par \medskip

We next show that $B\in X^{\star}.$ From \eqref{Hdimp} and Proposition \ref{character},
we know that $\chi_{M^+}=pt_{00} + \frac{1}{\sqrt{p}}\sum_{i} \gamma_{ii} B \in X^{\star}.$ Using Lemma \ref{e0e1}, we obtain $\varepsilon_A \chi_{M^+} = pt_{00}\in X^{\star}.$ Now,
\begin{equation}\label{proofB}
(\varepsilon_{gA}  \otimes \varepsilon_{gA})\Delta(pt_{00}) \stackrel{\eqref{comultHdual}}{=} \frac{1}{p} \sum_{k,l} \gamma_{kl} B \otimes \gamma_{lk}B \in X^{\star} \otimes_R X^{\star}.
\end{equation}
On the other hand, by \eqref{charHdualp} and Proposition \ref{character}, we have
$$\psi_{N^+}=\frac{1}{p}\sum_{i,j} gu_a^i u_b^j + \frac{1}{\sqrt{p}}\sum_{i} gv_a^i \in X.$$
Using again Lemma \ref{e0e1}, we get
$$e_1\psi_{N^+}=\frac{1}{\sqrt{p}}\sum_{i} gv_a^i \in X.$$
Finally, applying $e_1\psi_{N^+} \otimes_R id_{X^{\star}}$ to \eqref{proofB} we obtain
$$\frac{1}{p\sqrt{p}}\sum_{i,k,l} (\gamma_{kl}B)(gv_a^i)\gamma_{lk}B =\frac{1}{p}\sum_{i,k,l} \zeta^{il} \delta_{l-k,0}\gamma_{lk}B=\gamma_{00}B=B.$$
So, $B\in X^{\star}$.
\epf

\begin{lemma}\label{uava}
The elements $u_a, v_a, \frac{1}{\sqrt{p}} \sum_{i} u_a^i,$ and $\frac{1}{\sqrt{p}} \sum_{i} v_a^i$ belong to $X$.
\end{lemma}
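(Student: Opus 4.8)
The plan is to obtain all four elements from the (co)characters in \eqref{charH} and \eqref{charHdual}, which lie in $X$ or $X^{\star}$ by Proposition \ref{character}, together with the idempotents $e_0,e_1,ge_1$ supplied by Lemmas \ref{e0e1} and \ref{ge1B}, using that $X$ is closed under multiplication and under the operations $(\id\otimes\phi)\Delta(-)$ for $\phi\in X^{\star}$. The two ungraded generators are immediate: since $\psi_{L_1^+}=u_a+v_a\in X$ and $e_0,e_1\in X$, the central idempotents cut out $u_a=e_0\psi_{L_1^+}\in X$ and $v_a=e_1\psi_{L_1^+}\in X$ (here $e_0u_a=u_a,\ e_0v_a=0$, etc.). In particular all powers $u_a^i,v_a^i$ lie in $X$.

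For $\tfrac{1}{\sqrt{p}}\sum_i v_a^i$ I would exploit that $g$ fixes $v_a$. Multiplying the cocharacter $\psi_{N^+}\in X$ by $e_1$ annihilates its $gu_a^iu_b^j$-part and leaves $e_1\psi_{N^+}=\tfrac{1}{\sqrt{p}}\sum_i gv_a^i\in X$. Multiplying this on the left by $ge_1\in X$ and using $e_1g=ge_1$ together with $g^2=1$ gives $(ge_1)(gv_a^i)=v_a^i$, hence $(ge_1)(e_1\psi_{N^+})=\tfrac{1}{\sqrt{p}}\sum_i v_a^i\in X$. The denominator $\sqrt{p}$ is inherited directly from $\psi_{N^+}$.

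The element $\tfrac{1}{\sqrt{p}}\sum_i u_a^i$ is the genuinely delicate one, and I expect it to be the main obstacle, precisely because $g$ does \emph{not} fix $u_a$ (it swaps $u_a\leftrightarrow u_b$), so no cocharacter supplies a clean $\tfrac{1}{\sqrt{p}}\sum_i gu_a^i$. My approach is to transport the $\tfrac{1}{\sqrt{p}}$ already secured on the $v_a$-side across the comultiplication. The key structural identity is $\Delta(v_a^i)=u_a^i\otimes v_a^i+v_a^i\otimes u_a^i$ for \emph{every} $i$: for $i\geq 1$ the cross terms vanish since $u_av_a=0$, while for $i=0$ it reads $\Delta(e_1)=e_0\otimes e_1+e_1\otimes e_0$, which holds because $e_0-e_1=(u_a+v_a)^{-1}(u_a-v_a)$ is grouplike. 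Applying $(\id\otimes\phi)$ to $\Delta\bigl(\tfrac{1}{\sqrt{p}}\sum_i v_a^i\bigr)$ for a functional $\phi\in X^{\star}$ with $\phi(v_a^i)=1$ and $\phi(u_a^i)=0$ for all $i$ then returns exactly $\tfrac{1}{\sqrt{p}}\sum_i u_a^i$. The functional that does this is $\sum_k t_{k0}$, which is dual to $\sum_k v_a^k$ and is a matrix coefficient of the representation $M$.

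The crux is therefore to place $\sum_k t_{k0}$ in $X^{\star}$ with the right normalization. I would try to produce it by comultiplying $\chi_{M^+}\in X^{\star}$ and pairing one leg against elements already known to lie in $X=X^{\star\star}$ (such as $\sum_k v_a^k$, $ge_1$, or $\tfrac{1}{\sqrt{p}}\sum_k v_a^k$), via the formulas \eqref{comultHdual} for $\Delta(t_{00})$ and $\Delta(\gamma_{ii}B)$ together with \eqref{gammaijB}. The difficulty I anticipate is that the intrinsic factor $\sqrt{p}$ hidden in $B$ naturally yields $\sqrt{p}\sum_k t_{k0}$ or $p\sum_k t_{k0}$ rather than $\sum_k t_{k0}$ itself; extracting precisely the right scalar is the heart of the lemma. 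This is exactly where I would bring in the self-duality $H\simeq H^{*}$ of Subsection \ref{selfduality}, which identifies $\tfrac{1}{\sqrt{p}}\sum_i u_a^i$ with a concrete element of $X^{\star}$, and the interaction with the Hopf subalgebra $K\langle h\rangle\simeq KC_p$ (note $e_0\cdot\tfrac{1}{\sqrt{p}}\sum_i h^i=\tfrac{1}{\sqrt{p}}\sum_i u_a^i$), to force the normalization.
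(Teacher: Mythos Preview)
Your treatment of $u_a,v_a$ and of $\tfrac{1}{\sqrt{p}}\sum_i v_a^i$ is correct and coincides with the paper's: cut $\psi_{L_1^+}=u_a+v_a$ by $e_0,e_1$, and get $\tfrac{1}{\sqrt{p}}\sum_i gv_a^i=e_1\psi_{N^+}\in X$, then multiply by $ge_1$.

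For $\tfrac{1}{\sqrt{p}}\sum_i u_a^i$ you also have the right mechanism---apply $(\id\otimes\phi)$ to
\[
\Delta\Big(\tfrac{1}{\sqrt{p}}\sum_i v_a^i\Big)=\tfrac{1}{\sqrt{p}}\sum_i\big(u_a^i\otimes v_a^i+v_a^i\otimes u_a^i\big)
\]
with a functional satisfying $\phi(u_a^i)=0,\ \phi(v_a^i)=1$---but you then try to realize $\phi$ as $\sum_k t_{k0}\in H^*$ and spend the last paragraph worrying about how to force this into $X^{\star}$ with the correct normalization. That part is a genuine gap: $\sum_k t_{k0}$ is not a character of $H$ (only $pt_{00}$ occurs in $\chi_{M^{\pm}}$), and the self-duality/$K\langle h\rangle$ route you sketch does not produce it either.

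The paper's resolution is a one-line idea you are circling but not using: restrict first to the Hopf subalgebra $H_a\subset H$ generated by $u_a$ and $v_a$. There the assignment $\varphi(u_a)=0,\ \varphi(v_a)=1$ is an honest algebra homomorphism $H_a\to K$ (send $e_0\mapsto 0,\ e_1\mapsto 1$), i.e.\ a one-dimensional character of $H_a$. By Proposition~\ref{subsquo}(iii), $X\cap H_a$ is a Hopf order of $H_a$, and by Proposition~\ref{character} applied to $H_a$, $\varphi\in (X\cap H_a)^{\star}$. Since $\tfrac{1}{\sqrt{p}}\sum_i v_a^i\in X\cap H_a$, applying $\varphi\otimes\id$ to its comultiplication (which lands in $(X\cap H_a)\otimes_R(X\cap H_a)$) gives $\tfrac{1}{\sqrt{p}}\sum_i u_a^i\in X$ immediately. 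No self-duality, no manipulation of $\Delta(\chi_{M^+})$, and no normalization issue: the point is that the functional you need is a character of the \emph{subalgebra}, not of $H$.
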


\pf By \eqref{charHdual1} and Proposition \ref{character}, $u_a+v_a \in X$. Then $e_1(u_a+v_a)=v_a \in X$ and  $u_a=(u_a+v_a)-v_a \in X$. \vspace{-1.5pt}
We have just seen in the above proof that $\frac{1}{\sqrt{p}} \sum_{i} gv_a^i \in X$. Multiplying by $ge_1$, we have $\frac{1}{\sqrt{p}} \sum_{i} v_a^i \in X$.
Let $H_a$ be the Hopf subalgebra of $H$ generated by $u_a$ and $v_a$. Proposition \ref{subsquo}(iii) entails that $X \cap H_a$ is a Hopf order of $H_a$. Then
$$\Delta\Big(\frac{1}{\sqrt{p}} \sum_{i} v_a^i\Big) \stackrel{\eqref{comultA}}{=} \frac{1}{\sqrt{p}} \sum_{i} u_a^i \otimes v_a^i+v_a^i \otimes u_a^i \in (X \cap H_a) \otimes_R (X \cap H_a).$$
Consider the character $\varphi$ of $H_a$ given by $\varphi(u_a)=0$ and $\varphi(v_a)=1.$ By Proposition \ref{character}, $\varphi \in (X \cap H_a)^{\star}$.
Applying $\varphi \otimes_R id_{X \cap H_a}$ to the above equality we conclude that $\frac{1}{\sqrt{p}} \sum_{i} u_a^i \in X.$
\epf

\subsection{A special case}
If we show that $ge_0 \in X$, then it will follow from Lemmas \ref{e0e1}, \ref{ge1B}, and \ref{uava}, that all elements of the basis $\Bas$ in \eqref{basisH} of $H$ will be in any Hopf order $X$. Unlike for other elements, this can not be shown directly.
The strategy will be to adjoin to $K$ an element $\pi$ such that $\pi^2=\zeta-1$, prove the statement in this case and then derive it for $K$.
So, in this subsection we assume that $K$ contains such an element $\pi$. The proof requires some preparations.

\begin{lemma}\label{mapT}
The map $T:A_1 \rightarrow gA_0, v_a^iv_b^j \mapsto (B \otimes_R id_X)\Delta(gv_a^iv_b^j)$ can be expressed as
$$T(v_a^iv_b^j)=\frac{1}{\sqrt{p}} \sum_{k} \zeta^{jk} gu_a^k u_b^{i-k}.$$
Moreover, $T(X \cap A_1) \subseteq X \cap (gA_0).$
\end{lemma}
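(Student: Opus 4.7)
The plan is to prove the two assertions separately: first establish the closed form for $T(v_a^iv_b^j)$, then deduce the stability of $X \cap A_1$.

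For the formula, the idea is to exploit that $\Delta$ is an algebra map and write
$$\Delta(gv_a^iv_b^j) \;=\; \Delta(g)\,\Delta(v_a)^i\,\Delta(v_b)^j.$$
The decisive simplification comes from the orthogonality $A_0A_1 = A_1A_0 = 0$: it implies that in the binomial expansions of $\Delta(v_a)^i=(u_a\otimes v_a+v_a\otimes u_a)^i$ and $\Delta(v_b)^j=(u_b\otimes v_b+v_b\otimes u_b^{-1})^j$, every cross term contains a factor $u_xv_x$ or $v_xu_x$ and hence vanishes. What survives is
$$\Delta(v_a^iv_b^j) \;=\; u_a^iu_b^j \otimes v_a^iv_b^j \;+\; v_a^iv_b^j \otimes u_a^iu_b^{-j}.$$

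Next I would scan the four summands of $\Omega$ in formula \eqref{descripJ} multiplied against these two terms. Using again $A_0A_1=0$ together with $gA_0=A_0g$ and $gA_1=A_1g$, exactly one product has its first tensor factor in $gA_1$: the third summand of $\Omega$ multiplied by $v_a^iv_b^j\otimes u_a^iu_b^{-j}$. Moving the $v_b$'s past the $v_a$'s by $v_bv_a=\zeta^{-1}v_av_b$, the first tensor factor of the surviving contribution takes the form $gv_a^{i+k+l}v_b^{j-k-l}$. Since $B$ is supported on $\{gv_a^{*}v_b^0\}$ by \eqref{gammaijB}, the application of $B\otimes_R\mathrm{id}$ kills every term except those with $l=j-k$, and a standard reindexing $k\mapsto k-i$ produces the stated formula.

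For the inclusion $T(X\cap A_1)\subseteq X\cap gA_0$, note first that the explicit formula already places $T(v_a^iv_b^j)$ in $gA_0$, hence $T(A_1)\subseteq gA_0$ by linearity. To obtain the containment in $X$, I would invoke Lemma \ref{ge1B}: since $ge_1\in X$ and $x=e_1x$ for $x\in A_1$, we have $gx=(ge_1)x\in X$. Therefore $\Delta(gx)\in X\otimes_R X$, and because $B\in X^{\star}$ (again Lemma \ref{ge1B}), applying $B\otimes_R\mathrm{id}_X$ gives $T(x)\in X$, so $T(x)\in X\cap gA_0$.

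The main obstacle is the bookkeeping in the first part: in principle, four cases from $\Omega$ times two from $\Delta(v_a^iv_b^j)$ must be analysed. Fortunately the vanishing $A_0A_1=0$ and the fact that $B$ detects only elements of the form $gv_a^{*}$ collapse this to a single surviving product, turning a potentially messy calculation into a short one.
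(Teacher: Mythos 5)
Your proposal is correct and follows essentially the same route as the paper: expand $\Delta(gv_a^iv_b^j)=(g\otimes g)\Omega\,\Delta(v_a^iv_b^j)$, observe that the orthogonality $A_0A_1=A_1A_0=0$ leaves only the third summand of $\Omega$ against $v_a^iv_b^j\otimes u_a^iu_b^{-j}$, use that $B$ is supported on $gv_a^{\ast}$ (via \eqref{gammaijB}) to collapse the sum and reindex, and then get the $X$-stability from $ge_1\in X$, $B\in X^{\star}$ exactly as in the paper's argument.
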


\pf Since $B$ vanishes on $A_0 \oplus A_1 \oplus gA_0$, only the part of $\Delta(gv_a^iv_b^j)$ in $gA_1 \otimes gA_0$ is relevant for the computation. We have:
$$\begin{array}{ll}
T(v_a^iv_b^j) & \hspace{-6pt}\stackrel{\textrm{Th.}\hspace{2pt} \ref{structurenikshych}}{=}  \hspace{-1pt}{\displaystyle \frac{1}{p} \sum_{k,l} \zeta^{k(k+l)} B\big(gv_a^{k+l}v_b^{-(k+l)}v_a^iv_b^j\big)gu_a^{k+i}u_b^{l-j}} \vspace{3pt} \\
              & \hspace{3pt} = {\displaystyle \frac{1}{p} \sum_{k,l} \zeta^{(k+i)(k+l)} B\big(gv_a^{k+l+i}v_b^{-(k+l)+j}\big)gu_a^{k+i}u_b^{l-j}} \vspace{3pt} \\
              & \hspace{-1pt} \stackrel{\eqref{gammaijB}}{=} {\displaystyle \frac{1}{\sqrt{p}} \sum_k \zeta^{(k+i)j}gu_a^{k+i}u_b^{-k}} \vspace{3pt} \\
              & \hspace{4pt} = {\displaystyle \frac{1}{\sqrt{p}} \sum_k \zeta^{kj}gu_a^{k}u_b^{i-k}}. \vspace{3pt} \\
\end{array}$$
Let now $x \in X \cap A_1$. By Lemma \ref{ge1B}, we know that $ge_1 \in X$ and $B \in X^{\star}$. Then $gx =ge_1x \in X$ and $\Delta(gx) \in X \otimes_R X$.
From here, $T(x)=(B \otimes_R id_X)\Delta(gx) \in X$.
\epf

\begin{proposition}\label{geomseries}
Let $Z$ be an $R$-algebra and $z,e \in Z$. Assume that $ze=ez=z$. Set $\tilde{z}=\frac{1}{\pi}(z-e).$ If $\tilde{z} \in Z$, then
$$\frac{1}{\sqrt{p}}\sum_i z^i$$
is an $R$-linear combination of powers of $\tilde{z}$.
\end{proposition}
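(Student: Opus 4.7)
The plan is to imitate the computation from Lemma \ref{glo}, where $\frac{1}{p}\sum_{i}\sigma^{i}$ was expanded in powers of $x=(\sigma-1)/(\zeta-1)$. Here the analogue is to expand each $z^{i}$ around $e$ using the relation $z=e+\pi\tilde z$.

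First I would observe that $e$ commutes with $z$ by the hypothesis $ez=ze=z$, and hence commutes with $\tilde z$; the binomial theorem therefore applies and gives
$$z^{i}=\sum_{k=0}^{i}\binom{i}{k}\pi^{k}\tilde z^{k}e^{i-k}.$$
The relation $ez=z$ forces $ez^{k}=z^{k}$ for every $k\ge 1$, and in the intended application $e$ is either $1_{Z}$ or an idempotent with $e\tilde z^{k}=\tilde z^{k}$ for $k\ge 1$; in either case the factors $e^{i-k}$ collapse and can be ignored.

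Summing over $0\le i\le p-1$, swapping the order of summation, and applying the hockey-stick identity $\sum_{i=k}^{p-1}\binom{i}{k}=\binom{p}{k+1}$ yields
$$\sum_{i=0}^{p-1}z^{i}=\sum_{k=0}^{p-1}\binom{p}{k+1}\pi^{k}\tilde z^{k}.$$
It then remains to divide by $\sqrt p$ and check that every coefficient lies in $R$. For $k=0$, the coefficient is $\binom{p}{1}/\sqrt p=\sqrt p$, which belongs to $R$ because $\sqrt p\in K$ is an algebraic integer. For $1\le k\le p-2$, the binomial coefficient $\binom{p}{k+1}$ is divisible by $p$, so $\binom{p}{k+1}/\sqrt p=\sqrt p\cdot \binom{p}{k+1}/p$ is $\sqrt p$ times an ordinary integer, and multiplication by $\pi^{k}\in R$ keeps us in $R$.

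The delicate step, and the one I expect to be the main obstacle, is the top coefficient $\pi^{p-1}/\sqrt p$. Squaring it gives $(\zeta-1)^{p-1}/p$, which by the standard ramification identity $(\zeta-1)^{p-1}=p\xi$ with $\xi\in R^{\times}$ (the same fact invoked in Lemma \ref{glo} and Corollary \ref{pisigmae}) is a unit of $R$. Hence $\pi^{p-1}/\sqrt p\in K$ is integral over $R$, and since $R$ is integrally closed in $K$ it lies in $R$. This finishes the argument: every coefficient in the displayed expansion is in $R$, so $\frac{1}{\sqrt p}\sum_{i}z^{i}$ is an $R$-linear combination of powers of $\tilde z$.
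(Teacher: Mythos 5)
Your proof is correct and takes essentially the same route as the paper: the paper obtains the key identity $\sum_{i=0}^{p-1}z^i=\sum_{k=1}^{p}\binom{p}{k}(\pi\tilde z)^{k-1}$ by the symbolic telescoping trick of Lemma \ref{glo} rather than your term-by-term binomial expansion with the hockey-stick identity, but this is the same computation. The integrality after dividing by $\sqrt{p}$ is then settled exactly as you do, from $p\mid\binom{p}{k}$ for $1\le k\le p-1$ and from $\pi^{p-1}=\xi\sqrt{p}$ with $\xi\in R$ invertible.
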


\pf Set
$$\frac{(\pi \tilde{z} +e)^p - e}{\pi \tilde{z}}=\sum_{k=1}^{p} \binom{p}{k}(\pi \tilde{z})^{k-1}.$$
As in the proof of Lemma \ref{glo}, the fractional expression is just symbolic. The left-hand side equals $\sum_{i=0}^{p-1} z^i$.
We obtain the result by dividing this equation by $\sqrt{p}$, noticing that $\pi^{p-1}=\xi\sqrt{p}$ for some invertible $\xi\in R$,
and $\binom{p}{k}$ is divisible by $p$ for any $k=1,\ldots,p-1$. \epf

We are now ready to tackle the difficult point.

\begin{lemma}\label{ge0}
The element $ge_0$ belongs to $X$.
\end{lemma}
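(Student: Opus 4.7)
My plan is to combine the map $T$ of Lemma~\ref{mapT} with the geometric-series identity of Proposition~\ref{geomseries}, applied to a carefully chosen pair $(z,e)$ whose $p$-th partial sum $\sum z^i$ encodes $ge_0$.

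By Lemma~\ref{uava} together with Corollary~\ref{pisigmae}(2) applied to the Hopf subalgebra $K\langle u_a+v_a\rangle\simeq KC_p$ generated by the group-like $h=u_a+v_a$, the quantized elements $\tilde u_a=(u_a-e_0)/\pi$ and $\tilde v_a=(v_a-e_1)/\pi$ lie in $X$. Writing $\pi_s:=(\zeta^s-1)/\pi$ and $P:=\sum_{s=0}^{p-1}f_{ss}$ for the diagonal idempotent of $A_0$, a direct computation starting from Lemma~\ref{mapT} reorganizes
\[
T(\tilde v_a^k)\;=\;\sqrt{p}\sum_{s}\pi_s^{k}\,gf_{ss}\;=\;\sqrt{p}\,g\,\tilde u_a^{k}\,P\;\in\;X,\qquad k=0,1,\ldots,p-1,
\]
using that $\tilde u_a^{k}P=\sum_s\pi_s^{k}f_{ss}$. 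The case $k=0$ already gives $\sqrt{p}\,gP\in X$, and the remaining $p-1$ identities constitute a Vandermonde system in the distinct $\pi_1,\ldots,\pi_{p-1}$, so that the diagonal group-like pieces $gf_{ss}$ can be recovered inside $X$ up to factors of $\sqrt{p}$ and $\pi$.

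I would then apply Proposition~\ref{geomseries} to $z:=ge_0+e_1$ and $e:=1_H$. One checks $ze=ez=z$ and, using $(ge_0)^{2}=e_0$, that $z^{2}=1_H$; hence for odd $p$
\[
\sum_{i=0}^{p-1}z^{i}\;=\;\tfrac{p+1}{2}\,e_0+p\,e_1+\tfrac{p-1}{2}\,ge_0.
\]
The required side-input is $\tilde z=(z-1_H)/\pi=(ge_0-e_0)/\pi$. To prove $\tilde z\in X$ one splits $(g-1)e_0=(g-1)P+(g-1)(e_0-P)$: the diagonal part $(g-1)P/\pi$ is controlled by the identity $T(\tilde v_a)=\sqrt{p}\,g\tilde u_a P$ combined with $\tilde u_aP=\sum_s\pi_sf_{ss}$ and the identity $\pi^{2}=\zeta-1$, while the off-diagonal part $(g-1)(e_0-P)/\pi$ is accessed by applying the dual cocharacter $B\in X^{\star}$ (Lemma~\ref{ge1B}) to the comultiplications of the characters $\psi_{P_{ij}}=u_a^{i}u_b^{j}+u_a^{i}u_b^{-j}\in X$ and of $ge_1\in X$, thereby producing the off-diagonal $gf_{ij}$ with $i\neq j$ modulo $\pi X$.

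Once $\tilde z\in X$, Proposition~\ref{geomseries} yields $\tfrac{1}{\sqrt{p}}\sum z^{i}\in X$; multiplying by $\sqrt{p}\in R$ and subtracting $\tfrac{p+1}{2}e_0$ and $p\,e_1$ (both in $X$ by Lemma~\ref{e0e1}) isolates $\tfrac{p-1}{2}\,ge_0\in X$. Together with $p\,ge_0\in X$ (which follows independently from the diagonal analysis of the first step and the off-diagonal data used above), a B\'ezout combination using $\gcd\bigl(p,\tfrac{p-1}{2}\bigr)=1$ gives $ge_0\in X$.

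The main obstacle is verifying $\tilde z\in X$: the $T$-construction yields only diagonal contributions, so extracting the off-diagonal $gf_{ij}$ for $i\neq j$ requires a delicate interleaving of the dual-side information $B\in X^{\star}$, the characters $\psi_{P_{ij}}\in X$, and the comultiplication of $ge_1$; this is the step where the hypothesis $\pi^{2}=\zeta-1$ on the ground ring is used in an essential way.
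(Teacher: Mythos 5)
Your opening steps are sound and agree with the paper: $\frac{1}{\pi}(u_a-e_0),\,\frac{1}{\pi}(v_a-e_1)\in X$, and $T(\tilde v_a^k)=\sqrt{p}\,g\,\tilde u_a^kP\in X$ (the case $k=0$ is the paper's $\sqrt{p}\,ge\in X$, since $e=P$). But the pivot of your argument, the claim $\tilde z=\frac{1}{\pi}(ge_0-e_0)\in X$, is false, and this is a fatal gap. Take $X$ to be the Hopf order $Y$ of Theorem \ref{Xorder} with $I=(\pi)$, $J=(\frac{1}{\pi})$: as an $R$-module $Y$ is generated by $e_0,e_1,ge_0,ge_1$ and the elements $J^{i+j}x_b^ix_a^j$, $J^{i+j}gx_b^ix_a^j$, $J^{i+j}y_b^iy_a^j$, $J^{i+j}gy_b^iy_a^j$, and removing the powers of $J$ from these gives a $K$-basis of $H$; hence the $ge_0$-coordinate of every element of $Y$ lies in $R$, whereas $\tilde z$ has $ge_0$-coordinate $\frac{1}{\pi}\notin R$. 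So $\tilde z\notin Y$, and no Hopf order contains it (by Proposition \ref{unique} $Y$ is the only one). Note also that your route is circular in spirit: if you could prove $\tilde z\in X$, then $ge_0=\pi\tilde z+e_0\in X$ at once, so the geometric-series and B\'ezout steps buy nothing; the entire difficulty has been pushed into a claim that is actually false.

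The sketch offered for $\tilde z\in X$ also does not hold up. Inverting the Vandermonde system in $\pi_1,\dots,\pi_{p-1}$ forces division by the differences $\pi_s-\pi_t$, which are not units, so "recovering $gf_{ss}$ up to factors of $\sqrt p$ and $\pi$" is unjustified, and the same coordinate computation as above shows $\frac{1}{\pi}(g-1)P\notin Y$. For the off-diagonal part, applying $B\otimes\mathrm{id}$ to $\Delta(\psi_{P_{ij}})$ gives $0$, because $\Delta(u_a^iu_b^j)=u_a^iu_b^j\otimes u_a^iu_b^j+v_a^iv_b^j\otimes v_a^iv_b^{-j}$ lies in $A\otimes A$ and $B$ vanishes on $A\oplus gA_0$; applying it to $\Delta(ge_1)$ merely reproduces the diagonal element $\sqrt{p}\,gP$. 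The paper never divides $(g-1)e_0$ by $\pi$: from $\sqrt{p}\,ge\in X$ it uses the quotient Hopf algebra map $H\to K(C_2\times C_p)$ to find $x\in X$ with $\sqrt{p}\,e=x(\sqrt{p}\,ge)\in X\cap K\langle u_au_b^{-1}\rangle$, applies the classification of orders of $KC_p$ (Corollary \ref{pisigmae}) there to get $\frac{1}{\pi}(u_au_b^{-1}-e_0)\in X$, deduces $\frac{1}{\pi}(u_b^{-1}-e_0)\in X$, obtains $\frac{1}{\sqrt p}\sum_iu_b^i\in X$ via Proposition \ref{geomseries}, transfers this to $\frac{1}{\sqrt p}\sum_iv_b^i\in X$, and finally gets $ge_0$ directly as $T\bigl(\frac{1}{\sqrt p}\sum_iv_b^i\bigr)$. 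If you want to salvage your approach, aim at exhibiting an element of $X\cap A_1$ whose image under $T$ is $ge_0$, not at $\frac{1}{\pi}(ge_0-e_0)$.
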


\pf View $A$ as a Hopf subalgebra of $H$ and $A_0$ as a quotient Hopf algebra of $A$ via projecting any element on its component in $A_0$.
Then $X \cap A_0$ is a Hopf order of $A_0$ in light of Proposition \ref{subsquo}. Look now at the Hopf subalgebra of $A_0$ generated by $u_a$.
Lemma \ref{uava} shows that $\frac{1}{\sqrt{p}} \sum_{i} u_a^i \in X.$ Applying Corollary \ref{pisigmae}(ii),\vspace{-2pt} we have $\frac{1}{\pi}(u_a-e_0) \in X \cap A_0$. \par \smallskip

On the other hand, Lemmas \ref{e0e1} and \ref{mapT} yield that
$$T(e_1)=\frac{1}{\sqrt{p}}\sum_{k} gu_a^ku_b^{-k} \in X.$$
Put $e=\frac{1}{p}\sum_{k} u_a^ku_b^{-k}$. Observe that $e$ is an idempotent\vspace{-1.5pt} and $T(e_1)=\sqrt{p}\, ge \in X$.
Let $G$ be the group generated by $\sigma,\tau$ subject to $\sigma^2=\tau^p=1, \sigma\tau=\tau\sigma$.
The assignments $e_1 \mapsto 0; u_a,u_b \mapsto \tau; g \mapsto \sigma$ define a surjective algebra map $f:H \rightarrow KG$.
It is easy to check that $f$ is a Hopf algebra map and $\Ker f$ equals the ideal generated by $e_1$ and $u_au_b^{-1}-e_0$.
By Proposition \ref{subsquo}(iv), $f(X)$ is a Hopf order of $KG$. The element $\sigma$ must be in $f(X)$ because it can be received from characters of $(KG)^*$.
Take $x \in X \cap A_0$ such that $f(x)=\sigma$. Then $x-ge_0=h(u_au_b^{-1}-e_0)$ for some $h \in H$. Multiplying by $\sqrt{p}\, ge$ we arrive to $\sqrt{p}\, (xge-e)=0$.
Thus $\sqrt{p}\, e= x(\sqrt{p}\,ge) \in X \cap A_0$. Consider the Hopf subalgebra $E$ of $A_0$ generated by $u_au_b^{-1}$. \vspace{-2pt}
As $\sqrt{p}\, e = \frac{1}{\sqrt{p}}\sum_{k} u_a^ku_b^{-k} \in X \cap E$, Corollary \ref{pisigmae}(i) implies $\frac{1}{\pi}(u_au_b^{-1}-e_0) \in X$. Hence
$$\frac{1}{\pi}(u_b^{-1}-e_0)=u_a^{-1}\Big(\frac{1}{\pi}(u_au_b^{-1}-e_0)-\frac{1}{\pi}(u_a-e_0)\Big) \in X.$$
By Proposition \ref{geomseries}, $\frac{1}{\sqrt{p}} \sum_{i} u_b^i \in X.$ Let $H_b$ be the Hopf subalgebra of $H$ generated by $u_b$ and $v_b$.
Arguing as we did for $H_a$ in the proof of Lemma \ref{uava}, we obtain that $\frac{1}{\sqrt{p}} \sum_{i} v_b^i \in X$. Applying Lemma \ref{mapT}, we have
$$T\Big(\frac{1}{\sqrt{p}} \sum_{i} v_b^i\Big)=ge_0 \in X$$
and we are done.
\epf

\subsection{The necessary condition}
We next derive that all basis elements of $H$ must be in the Hopf order $X$.
This will be key to establish the necessary condition of our main result and to prove later that a Hopf order of $H$, if exists, must be unique.

\begin{proposition}\label{basis}
All elements of the basis $\Bas$ in \eqref{basisH} of $H$ belong to $X$.
\end{proposition}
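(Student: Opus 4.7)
The plan is to combine the preceding lemmas with a faithfully flat base-change argument. First I would handle the case already set up in Subsection \textit{A special case}, namely when $K$ contains an element $\pi$ with $\pi^{2}=\zeta-1$. In that situation Lemma \ref{ge0} gives $ge_{0}\in X$, and together with $ge_{1}\in X$ from Lemma \ref{ge1B} this yields
\[
g \;=\; ge_{0} + ge_{1}\;\in\; X.
\]
From the crossed-product structure $H = A*KC_{2}$ we have $g^{2}=1$ and $gu_{a}g = u_{b}$, $gv_{a}g = v_{b}$; since $u_{a},v_{a}\in X$ by Lemma \ref{uava}, conjugation immediately gives $u_{b},v_{b}\in X$. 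Consequently every element of each family $u_{a}^{i}u_{b}^{j}$, $v_{a}^{i}v_{b}^{j}$, $gu_{a}^{i}u_{b}^{j}$, $gv_{a}^{i}v_{b}^{j}$ is a product of elements already known to lie in $X$, and therefore lies in $X$.

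For arbitrary $K$, the plan is to reduce to the special case by enlarging $K$. Let $\pi$ be a square root of $\zeta-1$, set $K':=K(\pi)$, and $R':=\Oint_{K'}$. Since $R'$ is a finitely generated torsion-free module over the Dedekind domain $R$, the extension $R\hookrightarrow R'$ is faithfully flat. Therefore $X':=X\otimes_{R}R'$ sits inside $H\otimes_{K}K'$ as a Hopf order of the Hopf algebra $H\otimes_{K}K'$ over $R'$. All earlier lemmas (\ref{e0e1}, \ref{ge1B}, \ref{uava}, \ref{mapT}, \ref{ge0}) apply verbatim to $X'$ over $R'$, so by the special case just established every basis element of $H$ (viewed inside $H\otimes_{K}K'$ via the canonical inclusion) lies in $X'$.

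Finally I would descend. Faithful flatness of $R'/R$ yields $(X\otimes_{R}R')\cap H = X$ inside $H\otimes_{K}K'$; hence every basis element of $H$, being in $X'\cap H$, actually lies in $X$. The only non-routine step is Lemma \ref{ge0}, whose work has already been done in the previous subsection; the remaining obstacle here is the descent, which is a clean application of faithfully flat base change and therefore should present no real difficulty.
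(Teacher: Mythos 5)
Your overall strategy is the same as the paper's: work first over an extension containing $\pi$ with $\pi^{2}=\zeta-1$, use Lemma \ref{ge0} together with Lemma \ref{ge1B} to get $g=ge_{0}+ge_{1}$ in the extended order, and then descend by faithful flatness of $\Oint_{K(\pi)}$ over $\Oint_K$ (the paper phrases the descent as $(X+Rg)\otimes_R S=X\otimes_R S$ implying $X+Rg=X$, which is the same purity argument you invoke; descending only $g$ versus descending all basis elements makes no real difference). The descent step is therefore fine.

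However, there is a genuine gap in how you obtain $v_{b}$. You assert $gv_{a}g=v_{b}$, but in $H$ the element $g$ acts on $A$ by \emph{fixing} $v_{a}$ and $v_{b}$: the relations are $gu_{a}g=u_{b}$, $gu_{b}g=u_{a}$, $gv_{a}g=v_{a}$, $gv_{b}g=v_{b}$. So conjugation by $g$ gives you $u_{b}\in X$ from $u_{a}\in X$, but it cannot produce $v_{b}$, and consequently neither the basis elements $v_{a}^{i}v_{b}^{j}$ nor $gv_{a}^{i}v_{b}^{j}$ are reached by your argument. The paper closes this point differently: once $u_{b}\in X$, one has $\Delta(u_{b})=u_{b}\otimes u_{b}+v_{b}\otimes v_{b}^{-1}\in (X\cap H_{b})\otimes_{R}(X\cap H_{b})$, and applying the character of $H_{b}$ sending $u_{b}\mapsto 0$, $v_{b}\mapsto 1$ (which lies in $(X\cap H_{b})^{\star}$ by Proposition \ref{character}) to the first tensor factor yields $v_{b}^{-1}\in X$, hence $v_{b}\in X$; this is the same trick used for $H_{a}$ in Lemma \ref{uava}. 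With that replacement (carried out either over $K$ directly, as in the paper, or over $K(\pi)$ before descending), your proof is complete.
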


\pf From Lemmas \ref{e0e1} and \ref{uava}, we know that $e_0,e_1,u_a,v_a \in X$. We next see that $g \in X$. Take $\pi \in \Co$ such that $\pi^2=\zeta-1$ and set
$L=K(\pi), S=\Oint_L$. Then $X \otimes_R S$ is a Hopf order of $H_L:=H \otimes_K L$. Lemma \ref{ge1B} combined with Lemma \ref{ge0} yields that $g \in X \otimes_R S$.
We can identify $H \otimes_R L$ with $H_L$ via multiplication. Inside $H \otimes_R L$ we have
$(X+Rg)\otimes_R S \subseteq X \otimes_R S + Rg \otimes_R S=X \otimes_R S \subseteq (X+Rg)\otimes_R S$.
This equality holds indeed in $H \otimes_R S \subset H \otimes_R L$. Since $S$ is faithfully flat as an $R$-module, we obtain $X=X+Rg$. Therefore $g \in X$. \par \smallskip

It remains to prove that $u_b,v_b \in X$. We have that $u_bg=gu_a \in X$. Then $u_b=(u_bg)g \in X$ and consequently $\Delta(u_b) \in X \otimes_R X$.
If follows from the latter that $v_b \in X$ arguing for $H_b$ as we did for $H_a$ in the proof of Lemma \ref{uava}.
\epf

As a consequence of Lemma \ref{uava}, we get
$$\frac{1}{\sqrt{p}}\sum_{i} u_a^i+v_a^i \in X.$$
Let $E$ be the Hopf subalgebra of $H$ generated by the group-like element $h:=u_a+v_a$. Clearly, $E \simeq KC_p$ as Hopf algebras. Put $Z=E\cap X$ and
denote by $\Lambda$ the set of left integrals in the Hopf order $Z$ of $E$.

\begin{lemma}\label{intZ}
We have $\Lambda=R\big(\frac{1}{\sqrt{p}} \sum_{i} h^i\big).$
\end{lemma}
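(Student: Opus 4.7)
The plan is to prove the two inclusions separately, with $\lambda_0 := \tfrac{1}{\sqrt{p}}\sum_i h^i$. For $R\lambda_0 \subseteq \Lambda$, I observe that $u_a \in A_0$ and $v_a \in A_1$ satisfy $u_a v_a = 0$, since $A_0 A_1 = 0$; hence $h^i = u_a^i + v_a^i$ for every $i$. Summing gives $\lambda_0 = \tfrac{1}{\sqrt{p}}\sum_i u_a^i + \tfrac{1}{\sqrt{p}}\sum_i v_a^i$, which lies in $X$ by Lemma \ref{uava}. Since $\lambda_0 \in E$ it lies in $Z = X\cap E$, and as $\sum_i h^i$ is a left integral in $KC_p\simeq E$, we conclude $\lambda_0\in\Lambda$ and hence $R\lambda_0\subseteq\Lambda$.

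For the reverse inclusion, by Theorem \ref{descrip} write $Z = H(\mathcal{I})$ for some ideal $\mathcal{I}\supseteq(\zeta-1)$, so that $\Lambda = \tfrac{1}{p}\mathcal{I}^{p-1}\sum h^i$ by Lemma \ref{glo}; the previous paragraph gives $(\sqrt{p})\subseteq\mathcal{I}^{p-1}$, and it remains to establish $\mathcal{I}^{p-1}\subseteq(\sqrt{p})$. I proceed by base change: adjoin $\pi$ with $\pi^2=\zeta-1$, setting $L = K(\pi)$ and $S=\Oint_L$. Then $Z_S := Z\otimes_R S$ is a Hopf order of $E\otimes_K L\simeq LC_p$ containing $\lambda_0$, and Corollary \ref{pisigmae}(2) applied over $L$ gives $\tfrac{1}{\pi}(h-1)\in Z_S$. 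Writing $Z_S = H(\mathcal{J})$ via Theorem \ref{descrip} over $S$, these two memberships translate into the ideal-theoretic inclusions $(\sqrt{p})\subseteq\mathcal{J}^{p-1}$ and $(\pi)\subseteq\mathcal{J}$.

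A local valuation analysis at any prime $\mathfrak{P}$ of $S$ above $p$ now pins down $\mathcal{J}$: normalizing so that $\pi$ is a uniformizer at $\mathfrak{P}$ (so that $\nu_{\mathfrak{P}}(\zeta-1)=2$ and $\nu_{\mathfrak{P}}(\sqrt{p})=p-1$), both inclusions yield $\nu_{\mathfrak{P}}(\mathcal{J})\leq 1$; on the other hand, $\nu_{\mathfrak{P}}(\mathcal{J})=0$ would give $(Z_S)_{\mathfrak{P}} = S_{\mathfrak{P}}C_p$, which cannot contain $\lambda_0$ because $\sqrt{p}$ is not a unit at $\mathfrak{P}$. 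Hence $\nu_{\mathfrak{P}}(\mathcal{J})=1$ at every such $\mathfrak{P}$, and since $\mathcal{J}$ is trivial away from $p$ we obtain $\mathcal{J}=(\pi)$ globally, so $\mathcal{J}^{p-1}=(\sqrt{p})$ and $\Lambda_{Z_S}=S\lambda_0$. Since formation of left integrals commutes with the faithfully flat base change $S/R$, $\Lambda\otimes_R S = \Lambda_{Z_S} = (R\lambda_0)\otimes_R S$, so $\Lambda = R\lambda_0$ by faithful flatness. The main obstacle is the local valuation analysis pinning down $\mathcal{J}$; the rest is a direct application of Corollary \ref{pisigmae}(2) together with the Larson classification in Section \ref{ordercp}.
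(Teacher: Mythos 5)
Your first inclusion is fine and matches the paper. The reverse inclusion, however, has a genuine gap: you try to pin down the ideal $\mathcal{J}$ with $Z_S=H(\mathcal{J})$ using only the facts that $Z_S$ is a Hopf order of $LC_p$ containing $\lambda_0$ (and, after Corollary \ref{pisigmae}(2), the element $\frac{1}{\pi}(h-1)$). But these memberships only bound $\mathcal{J}$ from below, i.e.\ they give $(\sqrt{p})\subseteq\mathcal{J}^{p-1}$ and $(\pi)\subseteq\mathcal{J}$; they cannot force $\mathcal{J}^{p-1}\subseteq(\sqrt{p})$. Concretely, your exclusion of $\nu_{\mathfrak{P}}(\mathcal{J})=0$ is wrong on two counts. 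First, with the paper's convention a \emph{larger} ideal $\mathcal{J}$ gives a \emph{larger} Larson order: $\nu_{\mathfrak{P}}(\mathcal{J})=0$ corresponds locally to the maximal order $S_{\mathfrak{P}}\bigl[\tfrac{h-1}{\zeta-1}\bigr]$, not to $S_{\mathfrak{P}}C_p$ (that is $\mathcal{J}=(\zeta-1)$). Second, that maximal order \emph{does} contain $\lambda_0$: its module of integrals is $\frac{1}{p}S\sum_i h^i$, and $\lambda_0=\frac{\sqrt{p}}{p}\sum_i h^i$ lies there since $\sqrt{p}\in S$. So the constraints you use are consistent with any $\mathcal{J}$ between $(\pi)$ and $S$, and the local analysis does not determine $\mathcal{J}=(\pi)$. (There is also a secondary issue: with your normalization $\nu_{\mathfrak{P}}(\pi)=1$ the value group need not be $\Z$, so even granting your dichotomy, values strictly between $0$ and $1$ are not excluded.)

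The point the lemma actually needs is information from the ambient order $X$ beyond $Z=X\cap E$, and this is what the paper's proof uses. There, one takes an arbitrary integral $\textstyle\int=\frac{\lambda}{\sqrt{p}}\sum_i h^i\in\Lambda$ and exploits $g$: by Proposition \ref{basis} all basis elements, in particular $g$, lie in $X$, so $\varpi:=(\int\otimes\int)\Delta(g)\in X\otimes_R X$. Pairing with $\Gamma_2\otimes_R\Gamma_2$, where $\Gamma_2=\varepsilon_{gA}\chi_{M^+}=\frac{1}{\sqrt{p}}\sum_k\gamma_{kk}B\in X^{\star}$, a direct computation (only the $gA_1\otimes gA_1$ part of $\Delta(g)$ contributes) gives $(\Gamma_2\otimes_R\Gamma_2)(\varpi)=\lambda^2\in R$, hence $\lambda\in R$ and $\Lambda= R\lambda_0$. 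To repair your argument you would need to import such an interaction with $X$ (via $\Delta(g)$ and a suitable element of $X^{\star}$); the classification of orders of $KC_p$ alone cannot do it.
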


\pf
Obviously, $R(\frac{1}{\sqrt{p}}\sum_i h^i) \subseteq \Lambda$. \vspace{-1pt} For the reverse inclusion, let $\int \in \Lambda$.
There is $\lambda \in K$ such that $\int=\frac{\lambda}{\sqrt{p}}\sum_i h^i$. We will prove that $\lambda \in R$. Using Proposition \ref{basis},
$\varpi:=(\int\otimes \int)\Delta(g) \in X \otimes_R X$. Then $(\Gamma_2 \otimes_R \Gamma_2)(\varpi)\in R,$ with $\Gamma_2$ being the element defined in \eqref{elemW2}.
We next show that $(\Gamma_2 \otimes_R \Gamma_2)(\varpi)=\lambda^2$. \par \smallskip

Taking into account that $\Gamma_2$ vanishes on $A_0 \oplus A_1 \oplus gA_0$, it suffices to compute the part of $\varpi$ in $gA_1 \otimes gA_1$. We have:
$$\begin{array}{ll}
{\displaystyle (\Gamma_2 \otimes_R \Gamma_2)(\varpi)} &
 \stackrel{\eqref{elemW2}}{=} \ {\displaystyle \frac{\lambda^2}{p^3} \sum_{i,j,k,l} \sum_{r,s} (\gamma_{rr}B)(gv_a^{i+k}v_b^l)(\gamma_{ss}B)(gv_a^{j-l}v_b^k)} \vspace{3pt} \\
& \stackrel{\eqref{gammaijB}}{=} \ {\displaystyle \frac{\lambda^2}{p^2} \sum_{i,j,k,l} \sum_{r,s} \zeta^{r(i+k)}\delta_{l,0}\zeta^{s(j-l)}\delta_{k,0}} \vspace{3pt} \\
& \hspace{7pt} = \ \lambda^2.
\end{array}$$
So $\lambda^2\in R$ and thus $\lambda \in R$.
\epf

We can now establish the necessary condition in our main result from the previous lemma and Corollary \ref{pisigmae}(i):

\begin{proposition}\label{necessary}
Suppose that $H$ admits a Hopf order over $R$. Then there is an ideal $I$ of $R$ such that $I^{2(p-1)} = (p)$.
\end{proposition}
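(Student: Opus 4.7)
The plan is essentially a direct assembly of the two results that immediately precede the statement. Let $X$ be a Hopf order of $H$ over $R$. The element $h = u_a + v_a$ is a group-like element of $H$ (as one reads off from the formulas for $\Delta(u_a)$ and $\Delta(v_a)$ in \eqref{comultA}), and it has order $p$, so the subalgebra $E = K\langle h\rangle \subseteq H$ is a Hopf subalgebra isomorphic to $KC_p$. By Proposition \ref{subsquo}(iii), the intersection $Z := X \cap E$ is a Hopf order of $E$ over $R$.

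Next I would invoke Lemma \ref{intZ}, which was proved just above: the $R$-module of left integrals in $Z$ equals $R\bigl(\frac{1}{\sqrt{p}} \sum_i h^i\bigr)$. Transporting this through the isomorphism $E \cong KC_p$ sending $h$ to the generator $\sigma$, the Hopf order $Z$ satisfies exactly the hypothesis of Corollary \ref{pisigmae}(1). Applying that corollary to $Z$ produces an ideal $I$ of $R$ with $I^{2(p-1)} = (p)$, which is the desired conclusion.

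So no new technical difficulty is expected at this step; the proposition is a clean corollary of the classification in Section \ref{ordercp} together with Lemma \ref{intZ}. The genuine work has already been carried out earlier: first, showing via characters and cocharacters (using $\Gamma_2$ and the comultiplication formulas for $g$ and $\Gamma_1$) that $\frac{1}{\sqrt{p}}\sum_i h^i \in X$, and second, pinning down in Lemma \ref{intZ} that no larger scalar multiple of $\sum_i h^i$ can lie in $X$ — this is what ensures the integral generator is exactly $\frac{1}{\sqrt{p}}\sum_i h^i$ rather than a divisor of it, and is precisely what forces the numerical condition $I^{p-1} = (\sqrt{p})$, i.e.\ $I^{2(p-1)} = (p)$.
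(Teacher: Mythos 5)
Your proposal is correct and is exactly the argument the paper intends: Proposition \ref{necessary} is stated as an immediate consequence of Lemma \ref{intZ} (the integrals of $Z=X\cap K\langle h\rangle$ are precisely $R\bigl(\frac{1}{\sqrt{p}}\sum_i h^i\bigr)$) combined with Corollary \ref{pisigmae}(1), which is what you do. Your identification of where the real work lies (membership of $\frac{1}{\sqrt{p}}\sum_i h^i$ in $X$ and the exactness of the integral module) also matches the paper's structure.
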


\subsection{The Hopf order}
Assume that there is an ideal $I$ of $R$ such that $I^{2(p-1)} = (p)$.
In this subsection we will construct from $I$ a Hopf order of $H$ which will turn out to be the only Hopf order.
Consider the fractional ideal $J:=I^{-1}=\{\alpha \in K : \alpha I \subseteq R\}$. By the unique factorization property in $R$,
from $I^{2(p-1)}=(p)=(\zeta-1)^{p-1}=(\sqrt{p})^2$, it follows that $I^2=(\zeta-1)$ and $I^{p-1}=(\sqrt{p})$.
Then $J^2=(\frac{1}{\zeta-1})$ and $J^{p-1}=(\frac{1}{\sqrt{p}})$. \par \smallskip

We need the following version of Proposition \ref{geomseries}:

\begin{proposition}\label{geomseries2}
Let $Z$ be an $R$-algebra and $z,e \in Z$. Assume that $ze=ez=z$. If $J(z-e) \subset Z$, then
$$\frac{1}{\sqrt{p}}\sum_i z^i \in Z.$$
\end{proposition}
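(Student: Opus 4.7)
The plan is to follow closely the proof of Proposition \ref{geomseries}, replacing the single scalar $\pi$ by the fractional ideal $J$. First I would establish the purely algebraic identity
\[
\sum_{i=0}^{p-1} z^i \;=\; \sum_{k=1}^{p} \binom{p}{k}(z-e)^{k-1},
\]
obtained exactly as in the proof of Proposition \ref{geomseries}: expand $z^i=(e+(z-e))^i$ binomially (using that $e$ commutes with $z-e$), collapse the cross terms via $(z-e)e = z-e$ (which is where $e$ being an idempotent in the natural sense is used, as it is in all applications), and apply the hockey-stick identity $\sum_{i=j}^{p-1}\binom{i}{j}=\binom{p}{j+1}$. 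Dividing by $\sqrt{p}$ turns the task into showing that each summand $\tfrac{\binom{p}{k}}{\sqrt{p}}(z-e)^{k-1}$ lies in $Z$.

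Next I would observe that the hypothesis $J(z-e)\subseteq Z$ propagates automatically to $J^{k-1}(z-e)^{k-1}\subseteq Z$ for every $k\geq 1$: because scalars from $J\subset K$ commute with everything in $Z$, a product $\alpha_1(z-e)\cdots\alpha_{k-1}(z-e)\in Z$ equals $\alpha_1\cdots\alpha_{k-1}(z-e)^{k-1}$, and such elements generate $J^{k-1}(z-e)^{k-1}$ as an $R$-module. Hence the problem reduces to the purely ideal-theoretic containment $\tfrac{\binom{p}{k}}{\sqrt{p}}\in J^{k-1}$ for each $1\leq k\leq p$.

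This final verification splits into three cases using the two facts extracted from $I^{2(p-1)}=(p)$: namely $J^{p-1}=(\tfrac{1}{\sqrt{p}})$, and $R\subseteq J$ (the latter because $I\subseteq R$ forces $1\in J$, hence $R\subseteq J^n$ for every $n\geq 0$). For $k=1$ the coefficient is $\tfrac{p}{\sqrt{p}}=\sqrt{p}\in R=J^0$. For $k=p$ it is exactly $\tfrac{1}{\sqrt{p}}\in J^{p-1}$. For the middle range $2\leq k\leq p-1$, the divisibility $p\mid\binom{p}{k}$ lets me write $\tfrac{\binom{p}{k}}{\sqrt{p}}=\sqrt{p}\,m_k$ with $m_k\in R$, which lies in $R\subseteq J^{k-1}$.

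I do not anticipate any real obstacle: the argument is essentially a bookkeeping translation of the proof of Proposition \ref{geomseries} from the principal setting (where $\pi$ is a literal element of $R$) to the ideal-theoretic setting (where $J$ plays the role of a symbolic $(p-1)$-th root of $\tfrac{1}{\sqrt{p}}$). The closest thing to a subtle point is the use of the containment $R\subseteq J$, which is precisely what guarantees that the middle coefficients, which are only known to lie in $R$, automatically sit in the larger ideals $J^{k-1}$.
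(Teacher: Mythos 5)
Your proof is correct and follows essentially the same route as the paper: the paper's proof simply says to set $\tilde{z}=z-e$, repeat the binomial-identity argument of Proposition \ref{geomseries}, and use $\frac{1}{\sqrt{p}}\tilde{z}^{p-1}\in Z$ (via $J^{p-1}=(\frac{1}{\sqrt{p}})$ and $J^{p-1}(z-e)^{p-1}\subseteq Z$), which is exactly your treatment of the $k=p$ term, while the middle terms are handled just as you do via $p\mid\binom{p}{k}$ and $R\subseteq J$. Your parenthetical about $e$ being idempotent matches the paper's implicit use of the same fact, so there is no discrepancy to flag.
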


\pf Put $\tilde{z}=z-e$, proceed like in the other proof and use that $\frac{1}{\sqrt{p}}\tilde{z}^{p-1} \in Z$.
\epf

\begin{theorem}\label{Xorder}
The $R$-subalgebra $Y$ of $H$ generated by $e_0,e_1,g,J(u_a-e_0),$ \linebreak $J(u_b-e_0), J(v_a-e_1),$ and $J(v_b-e_1)$ is a Hopf order of $H$.
\end{theorem}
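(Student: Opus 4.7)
The plan is to verify the Hopf-order axioms on the generating set of $Y$ by first establishing an explicit $R$-module description and then checking each axiom. The algebra decomposition $A = A_0 \oplus A_1$ extends to $H = A \oplus gA$; conjugation by $g$ preserves $A_0$ and $A_1$ (it swaps $u_a \leftrightarrow u_b$ and fixes $v_a, v_b$), and the $A_0$-generators annihilate the $A_1$-generators, so I expect
\begin{equation*}
Y = (Y \cap A_0) \oplus (Y \cap A_1) \oplus g(Y \cap A_0) \oplus g(Y \cap A_1).
\end{equation*}
The assumption $I^{2(p-1)} = (p)$, combined with $(\zeta - 1)^{p-1} = (p)$ and unique factorisation in the Dedekind ring $R$, forces $I^2 = (\zeta - 1)$. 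Hence the $R$-subalgebra of $A_0$ generated by $e_0$ and $J(u_a - e_0)$ matches the global Larson order of Section \ref{ordercp} applied to $K[u_a]$ (with identity $e_0$), and Lemma \ref{glo} gives the basis $\{J^i(u_a - e_0)^i : 0 \le i < p\}$. Combining this with the analogue for $u_b$, I get $Y \cap A_0 = \bigoplus_{i,j=0}^{p-1} J^{i+j}(u_a - e_0)^i(u_b - e_0)^j$. For $Y \cap A_1$, the relation $v_a v_b = \zeta v_b v_a$ produces $(v_a - e_1)(v_b - e_1) = \zeta(v_b - e_1)(v_a - e_1) + (\zeta - 1)(v_a + v_b - e_1)$, with the commutator term absorbed because $J^2(\zeta - 1) = R$ and $v_a + v_b - e_1 \in Y$, so an analogous basis $\{J^{s+t}(v_a - e_1)^s(v_b - e_1)^t\}$ with a fixed ordering works. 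The total $R$-rank is $4p^2 = \dim_K H$, so $Y$ is finitely generated projective with $Y \otimes_R K = H$.

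The easy axioms hold on the generators: $1_H = e_0 + e_1 \in Y$; the counit sends $e_0, g \mapsto 1$, $e_1 \mapsto 0$ and kills the $J$-generators; the antipode fixes $e_0, e_1, g$, so $S(J(u_a - e_0)) = -u_a^{p-1} \cdot J(u_a - e_0) \in Y$ with obvious analogues. For comultiplication on the $A$-generators, the identity $\Delta(e_0) = e_0 \otimes e_0 + e_1 \otimes e_1$ follows from $\Delta(u_a)^p = u_a^p \otimes u_a^p + v_a^p \otimes v_a^p = e_0 \otimes e_0 + e_1 \otimes e_1$. Using this, I can rearrange
\begin{equation*}
\Delta(u_a - e_0) = (u_a - e_0) \otimes u_a + e_0 \otimes (u_a - e_0) + (v_a - e_1) \otimes v_a + e_1 \otimes (v_a - e_1),
\end{equation*}
placing $J \Delta(u_a - e_0)$ into $Y \otimes_R Y$; the three remaining $A$-generators are handled identically.

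The main obstacle is $\Delta(g) = (g \otimes g)\Omega$, which reduces (using $g \in Y$) to showing $\Omega \in (Y \cap A) \otimes_R (Y \cap A)$, with $\Omega$ the complicated element of \eqref{descripJ}. I plan to split $\Omega$ into its four components in $A_0 \otimes A_0$, $A_0 \otimes A_1$, $A_1 \otimes A_0$, $A_1 \otimes A_1$ and check each separately, expanding $u_a^i = \sum_s \binom{i}{s}(u_a - e_0)^s$ (with the convention $(u_a - e_0)^0 := e_0$) and analogous formulas for $u_b, v_a, v_b$. Each component produces, for each quadruple $(s, t, s', t')$, a coefficient $C_{s,t,s',t'}$ of the basis element $(u_a - e_0)^s(u_b - e_0)^t \otimes (u_a - e_0)^{s'}(u_b - e_0)^{t'}$ (or its $A_1$-variant) that is a sum of roots of unity weighted by binomial coefficients, scaled by $1/p^2$ or $1/p$. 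The required containment $C_{s,t,s',t'} \in J^{s+t+s'+t'}$ translates to the $\mathfrak{p}$-adic valuation inequality $\nu(C_{s,t,s',t'}) \ge -(s+t+s'+t')/(2(p-1))$ at each prime $\mathfrak{p} \mid (p)$ of $R$. I expect this to follow from $\nu(I) = 1/(2(p-1))$, $\nu(\zeta - 1) = 1/(p-1)$, and $\nu(\binom{p}{k}) = 1$ for $1 \le k \le p-1$, combined with identities such as $\sum_{i=0}^{p-1} i \zeta^{-il} = p/(\zeta^{-l} - 1)$ for $l \ne 0$ and the Chu--Vandermonde $\sum_{l=0}^{p-1} \binom{l}{t}\binom{p-l}{s'} = \binom{p+1}{s'+t+1} - \binom{p}{t}\delta_{s',0}$, which deliver the cancellations that absorb the $1/p^2$ and $1/p$ factors. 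The real work will be the bookkeeping: verifying the inequality uniformly across all four components of $\Omega$ and all quadruples $(s, t, s', t')$, especially at the extremal values of $s + t + s' + t'$ where $J^{s+t+s'+t'}$ is most restrictive and every bit of $p$-divisibility must be accounted for.
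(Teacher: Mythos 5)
Your first half is fine and essentially coincides with the paper's: the Larson-type module description of $Y$ (with the commutation relation $(v_a-e_1)(v_b-e_1)=\zeta(v_b-e_1)(v_a-e_1)+(\zeta-1)(v_a+v_b-e_1)$ absorbing the correction term via $J^2=(\tfrac{1}{\zeta-1})$), the rank count, and the verification of unit, counit, antipode and of $\Delta$ on the generators coming from $A$ are all correct. The genuine gap is the step that carries the whole theorem: you never actually prove that $\Omega$, hence $\Delta(g)=(g\otimes g)\Omega$, lies in $Y\otimes_R Y$. What you give is a plan --- expand the four components of \eqref{descripJ} in the $x,y$-bases and bound the valuation of every coefficient $C_{s,t,s',t'}$ --- together with the admission that ``the real work will be the bookkeeping.'' That bookkeeping is exactly where the difficulty sits, and the tools you name (Chu--Vandermonde, $\sum_i i\zeta^{-il}=p/(\zeta^{-l}-1)$) are not shown to suffice even for the $A_0\otimes A_0$ component. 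For the components involving $A_1$ the plan is further off: rewriting $v_a^{k+l}v_b^{\pm(k+l)}$ and $v_a^kv_b^l$ in ordered monomials $y_b^iy_a^j$ produces $(\zeta-1)$-corrections from the commutation relation, so the coefficients are not merely ``roots of unity weighted by binomial coefficients'' as you describe, and at the extremal total degrees near $4(p-1)$ there is no slack in the required inequality, so every such correction must be tracked.

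The paper's proof supplies precisely the device your plan lacks. Each of the four parts of $\Omega$ is factored into a product of two geometric-series-type sums, e.g.\ the $A_0\otimes A_0$ part is $\bigl(\frac1p\sum_{i,l}\zeta^{-il}u_a^i\otimes u_b^l\bigr)\bigl(\frac1p\sum_{j,k}\zeta^{jk}u_b^j\otimes u_a^k\bigr)$ and the $A_0\otimes A_1$ part is $\bigl(\frac{1}{\sqrt p}\sum_k u_a^k\otimes v_b^kv_a^k\bigr)\bigl(\frac{1}{\sqrt p}\sum_l u_b^l\otimes v_a^lv_b^l\bigr)$. Each factor is then put into $Y\otimes_R Y$ with no heavy estimates: either by comparing its coefficients with those of $\frac1p\sum_{i,l}\zeta^{-il}v_a^iv_b^l=\frac1p\bigl(\sum_l v_b^l\bigr)\bigl(\sum_i v_a^i\bigr)$, where the twisted commutation absorbs the character values and the symbolic geometric-series expansion of Lemma \ref{glo} exhibits the binomial factors $\binom{p}{k}$ divisible by $p$; or by applying Proposition \ref{geomseries2} to $z=u_a\otimes u_bu_a$ (and its variants) inside the $R$-algebra $Y\otimes_R Y$, respectively inside $H\otimes H^{op}$ for the $A_1\otimes A_1$ part. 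Until you either adopt such a factorization argument or genuinely carry out your valuation estimates for all four components (including the noncommutative reordering terms), your proposal does not yet establish that $Y$ is closed under comultiplication, and so does not prove the theorem.
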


\pf
We will first prove that $Y$ is finitely generated as an $R$-module. Observe that $J$ is finitely generated. Write
$$x_a=u_a-e_0, \quad  x_b=u_b-e_0, \quad y_a=v_a-e_1, \quad  y_b=v_b-e_1.$$
We have that $x_a,x_b,y_a,y_b \in Y$ because $IJ=R$. Since $e_0,e_1 \in Y,$ we also have $u_a,u_b,v_a,v_b \in Y$.
We next check that $(Jx_a)^n \subset \sum_{i=1}^{p-1} J^ix_a^i$ for $n \geq p$. The element $x_a$ satisfy $\sum_{i=1}^{p} {p \choose i}x_a^i=0$.
As $J^pJ^{p-2}=J^{2(p-1)}=(\frac{1}{p})$, we get $R=(J^pp)J^{p-2}$. Then $J^pp=I^{p-2} \subset R$. Hence
$$(Jx_a)^p=J^px_a^p \subset \sum_{i=1}^{p-1} J^p {p \choose i} x_a^i \subset \sum_{i=1}^{p-1} Rx_a^i \subset \sum_{i=1}^{p-1} J^ix_a^i.$$
The same holds for $x_b,y_a,$ and $y_b$. Consider now the equality:
$$\begin{array}{ll}
y_a y_b & = v_av_b - v_a - v_b + e_1 \vspace{2pt} \\
        & = \zeta v_bv_a - v_a - v_b + e_1 \vspace{2pt}  \\
        & = \zeta y_by_a + (\zeta-1)(y_a+ y_b+e_1).
\end{array}$$
Then, for $\alpha_a,\alpha_b \in J$ the coefficient of $e_1$ in $(\alpha_ay_a)(\alpha_by_b)$ belongs to $R$ because $J^2=(\frac{1}{\zeta-1})$.
Using the previous equality one can prove that any product of the form $(\beta_ay_a^k)(\beta_by_b^l)$ with $\beta_a \in J^k, \beta_b \in J^l$
can\vspace{-2pt} be expressed as an $R$-linear combination of elements in
$(J^iy_b^i)(J^jy_a^j)$ with $0 \leq i \leq l,\, 0 \leq j \leq k$.
Notice that the coefficient of $e_1$ always belong to $R$.
All these facts, together with the relations among $x_a,x_b,y_a,y_b,$ and $g$ inside $H$, show that $Y$ is finitely generated as an $R$-module.
More precisely, using that $J$ is finitely generated, the following elements generate $Y$ over $R$:
$$\begin{array}{l}
e_0,\, e_1,\, ge_0,\, ge_1,\, J^{i+j}(x_b^ix_a^j),\, J^{i+j}(gx_b^ix_a^j),\, J^{i+j}(y_b^iy_a^j),\, J^{i+j}(gy_b^iy_a^j), \vspace{3pt} \\
\hspace{10.8cm} i,j=0,\ldots,p-1. \end{array}$$
Removing the powers of $J$ from these elements, we obtain a $K$-basis of $H$ (we understand that $i,j$ are not simultaneously zero).
Hence $Y$ is an order of $H$. \par \medskip

We next prove that $Y$ is closed under comultiplication and antipode.
It is easy to check that the comultiplication of the $e$'s, $x$'s and $y$'s lie in $Y \otimes_R Y$, the counits of them lies in $R$, and $S(Y) \subset Y$.
For instance, for $\alpha \in J$ we have:
$$\begin{array}{l}
\Delta(\alpha x_a) = \alpha x_a \otimes u_a + \alpha y_a \otimes v_a + e_0 \otimes \alpha x_a + e_1 \otimes \alpha y_a \in Y \otimes_R Y, \vspace{5pt} \\
\Delta(\alpha x_b) = \alpha x_b \otimes u_b + e_0 \otimes \alpha x_b + \alpha y_b \otimes v_b^{p-1} \vspace{3pt} \\
 \hspace{1.8cm} +\sum\limits_{k=1}^{p-1} e_1 \otimes {p-1 \choose k} (\alpha y_b)y_b^{k-1} \in Y \otimes_R Y.
\end{array}$$
It only remains to show that $\Delta(g) \in Y \otimes_R Y$. For, we need to rewrite $\Delta(g)$ as an $R$-linear combination of elements in $Y \otimes_R Y$.
Recall from Equation \ref{comultg1} that $\Delta(g)$ consists of four summands. We treat each of them separately: \par \smallskip

$\bullet$ \underline{Part in $A_0 \otimes A_0$.} Consider the sum
$$\frac{1}{p^2}\sum_{i,j,k,l} \zeta^{jk-il} u_a^iu_b^j\otimes u_a^k u_b^l=\Big(\frac{1}{p}\sum_{i,l} \zeta^{-il} u_a^i \otimes u_b^l\Big)\Big(\frac{1}{p}\sum_{j,k} \zeta^{jk}u_b^j\otimes u_a^k\Big).$$
We argue on the first factor, the second one being similar. Replace $u_a$ and $u_b$ by $x_a+e_0$ and $x_b+e_0$ respectively and expand. The coefficient of
$x_a^r \otimes x_b^s$ equals $\frac{\zeta^{-1}}{p}$ if $r=s=p-1$. Then
$$\frac{\zeta^{-1}}{p}x_a^{p-1} \otimes x_b^{p-1}=\frac{\zeta^{-1}}{\sqrt{p}}x_a^{p-1} \otimes \frac{1}{\sqrt{p}}x_b^{p-1}$$
belongs to $Y \otimes_R Y$ because $\frac{1}{\sqrt{p}} \in J^{p-1}$. For either $r$ or $s$ different from $p-1$ we use the following argument.
The coefficient of $x_a^r \otimes x_b^s$ will be the same as the coefficient of $y_a^r \otimes y_b^s$ in the sum
$$\frac{1}{p}\sum_{i,l} \zeta^{-il} v_a^i\otimes v_b^l.$$
This in turn will be the same as the coefficient of $y_a^ry_b^s$ in the sum
\begin{align}
{\displaystyle \frac{1}{p} \sum_{i,l} \zeta^{-il}v_a^iv_b^l} &  = {\displaystyle \frac{1}{p} \sum_{i,l} v_b^l v_a^i} \notag \vspace{3pt} \\
 & = {\displaystyle \frac{1}{p}\Big(\sum_{l} v_b^l\Big)\Big(\sum_{i} v_a^i}\Big) \notag \vspace{3pt} \\
 & = {\displaystyle \frac{1}{p}\frac{(y_b +e_1)^p - e_1}{y_b}\frac{(y_a + e_1)^p -e_1}{y_a}}. \label{geomser}
\end{align}
We are using here the convention in the proof of Proposition \ref{glo} for these fractional expressions.
The coefficient of $y_a^ry_b^s$ in this sum contains the binomial coefficient $\binom{p}{k}$ for $k=1,\ldots,p-1$. Therefore the first factor belong to $Y \otimes_R Y$. \par \medskip

$\bullet$ \underline{Part in $A_0 \otimes A_1$.} We have the summand
$$\frac{1}{p}\sum_{k,l} \zeta^{-(k+l)k}u_a^ku_b^l\otimes v_a^{k+l}v_b^{k+l}=
\Big(\frac{1}{\sqrt{p}}\sum_{k} u_a^k \otimes v_b^k v_a^k\Big)\Big(\frac{1}{\sqrt{p}}\sum_{l} u_b^l \otimes v_a^l v_b^l\Big).$$
We show that each of the sums belongs to $Y \otimes_R Y$. We do it only for the first one.
For the second one proceed similarly. The coefficient of $y_a^r \otimes y_b^sy_a^t$ in this sum will be the same as the coefficient of $x_a^r \otimes x_b^sx_a^t$ in the sum
\begin{equation}\label{deltaga0a1}
\frac{1}{\sqrt{p}}\sum_{k} u_a^k \otimes u_b^ku_a^k=\frac{1}{\sqrt{p}}\sum_{k} (u_a \otimes u_bu_a)^k.
\end{equation}
Observe that $u_a \otimes u_bu_a \in Y \otimes_R Y$ and
$$\begin{array}{ll}
{\displaystyle J(u_a \otimes u_bu_a-e_0 \otimes e_0)} & = Jx_a \otimes x_bx_a + Jx_a \otimes x_b + Jx_a \otimes x_a + Jx_a \otimes e_0 \vspace{3pt} \\
                  & \phantom{aa} + e_0 \otimes (Jx_b)x_a+e_0 \otimes Jx_b+e_0 \otimes Jx_a \in Y \otimes_R Y.
\end{array}$$
This together Proposition \ref{geomseries2} yields that the sum belongs to $Y \otimes_R Y$.
\par \medskip

$\bullet$ \underline{Part in $A_1 \otimes A_0$.} We argue as before with the summand
$$\frac{1}{p}\sum_{k,l} \zeta^{(k+l)k} v_a^{k+l}v_b^{-(k+l)} \otimes u_a^ku_b^l=\Big(\frac{1}{\sqrt{p}}\sum_{k} v_b^{(p-1)k}v_a^k \otimes u_a^k\Big)
\Big(\frac{1}{\sqrt{p}}\sum_{l} v_a^l v_b^{(p-1)l} \otimes u_b^l\Big).$$
but using the following variation: $v_b^{p-1}=\bar{y}_b+e_1$ with $\bar{y}_b=\sum_{j=1}^{p-1} \binom{p-1}{j}y_b^j$ and $J \subset J^j$ for $j=1,\ldots,p-1$.
\medskip

$\bullet$ \underline{Part in $A_1 \otimes A_1$.} Consider the summand
$$\frac{1}{p}\sum_{k,l} v_a^kv_b^l \otimes v_a^{-l}v_b^k=\frac{1}{p}\sum_{k,l} v_a^kv_b^l \otimes v_a^{(p-1)l}v_b^k.$$
Write it in $H \otimes H^{op}$ as
$$\Big(\frac{1}{\sqrt{p}}\sum_{k}  v_a^k \otimes v_b^k \Big)\Big(\frac{1}{\sqrt{p}} \sum_{l} v_b^l
\otimes v_a^{(p-1)l}\Big)$$
and proceed as before. This finishes the proof.
\epf

\begin{proposition}\label{unique}
The Hopf order $Y$ is unique.
\end{proposition}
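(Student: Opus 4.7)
The plan is to sandwich every Hopf order $X$ of $H$ between $Y$ and a Hopf order $Y^{\dagger}$ obtained from the self-duality isomorphism $\Psi:H\to H^*$ of Subsection~\ref{selfduality}, and then to force $Y=Y^{\dagger}$ using Larson's Proposition~\ref{Larson2}, which will yield $X=Y$.

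To establish the lower bound $Y\subseteq X$, I will verify that each generator of $Y$ belongs to $X$. The elements $e_0,e_1,g$ lie in $X$ by Lemma~\ref{e0e1} and Proposition~\ref{basis}. For $J(u_a-e_0)$ and $J(v_a-e_1)$, I exploit that the group-like $h:=u_a+v_a$ generates a Hopf subalgebra $E\simeq KC_p$: Lemma~\ref{intZ} identifies the integrals of $X\cap E$ as $R\cdot\frac{1}{\sqrt p}\sum_ih^i$, and combining this with Theorem~\ref{descrip} and Lemma~\ref{glo} forces $X\cap E=H(I)$, the unique global Larson order whose integrals yield $I^{p-1}=(\sqrt p)$. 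This gives $J(h-1)\subseteq X$, and multiplying by the orthogonal idempotents $e_0,e_1\in X$ produces $J(u_a-e_0),J(v_a-e_1)\subseteq X$. Conjugation by $g\in X$ (which swaps $u_a,u_b$ and fixes $e_0$) then yields $J(u_b-e_0)\subseteq X$. The most delicate inclusion is $J(v_b-e_1)\subseteq X$; since no group-like of $H$ has $v_b$ as a summand, I will extend scalars to $L=K(\pi)$ with $\pi^2=\zeta-1$ and work with $X\otimes_R S$ over $S=\Oint_L$. Over $L$ the proof of Lemma~\ref{ge0} applies and gives $\frac{1}{\sqrt p}\sum_iv_b^i\in X\otimes_R S$; then Corollary~\ref{pisigmae}~(2), applied to the Hopf order $(X\otimes_R S)\cap L\langle v_b\rangle$ of $LC_p$ with its group-algebra Hopf structure, produces $\frac{1}{\pi}(v_b-e_1)\in X\otimes_R S$. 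Since $J\cdot S=(\pi^{-1})S$, faithful flatness of $S/R$ descends this to $J(v_b-e_1)\subseteq X$.

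For the upper bound, the Hopf isomorphism $\Psi$ transports minimality: $\Psi(Y)$ is the minimum Hopf order of $H^*$, so $\Psi(Y)\subseteq X^{\star}$, and dualizing with $X^{\star\star}\simeq X$ (Proposition~\ref{subsquo}~(ii)) yields $X\subseteq Y^{\dagger}$, where $Y^{\dagger}:=\Psi(Y)^{\star}$ is identified as a Hopf order of $H$ via $H\simeq H^{**}$. Applying the lower bound to $Y^{\dagger}$ gives $Y\subseteq Y^{\dagger}$. The normalized integral of $H$ is $\Lambda_H=\frac{1}{2p^2}(1+g)\sum_{i,j}u_a^iu_b^j$; by Proposition~\ref{geomseries2} both $\frac{1}{\sqrt p}\sum_i u_a^i$ and $\frac{1}{\sqrt p}\sum_j u_b^j$ lie in $Y$, so their product together with $1+g\in Y$ shows $2p\Lambda_H\in Y$, whence $(2p)\subseteq\varepsilon(\Lambda_Y)$, and the inclusion $Y\subseteq Y^{\dagger}$ upgrades this to $(2p)\subseteq\varepsilon(\Lambda_{Y^{\dagger}})$ as well. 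Proposition~\ref{Larson1} applied to $\Psi(Y)$, together with the fact that $\Psi$ preserves counits of integrals, gives $\varepsilon(\Lambda_Y)\,\varepsilon(\Lambda_{Y^{\dagger}})=(\dim H)=(2p)^2$; since both ideals contain $(2p)$ and their product is $(2p)^2$, both must equal $(2p)$. Proposition~\ref{Larson2} then yields $Y=Y^{\dagger}$, and therefore $X=Y$. The principal obstacle is the minimality step for $J(v_b-e_1)$, which is forced through the scalar extension argument; the remaining generators of $Y$ come within reach of the Hopf-subalgebra techniques of earlier sections.
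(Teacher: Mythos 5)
Your overall route is the paper's: force the generators of $Y$ into an arbitrary Hopf order $X$ (so $Y$ is minimal), then use the self-duality $\Psi$ together with Larson's Propositions \ref{Larson1} and \ref{Larson2} to show that the maximal order $Y^{\dagger}=\Psi(Y)^{\star}$ coincides with $Y$; your divisibility trick (both $\varepsilon(\Lambda_Y)$ and $\varepsilon(\Lambda_{Y^{\dagger}})$ contain $(2p)$ while their product is $(4p^2)$) is a pleasant way to avoid computing $\Lambda_Y$ exactly, and the steps giving $J(u_a-e_0)$, $J(v_a-e_1)$, $J(u_b-e_0)$ from Lemma \ref{intZ}, Theorem \ref{descrip} and conjugation by $g$ are sound. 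The gap is precisely at the step you call delicate. You apply Corollary \ref{pisigmae} (2) to $(X\otimes_R S)\cap L\langle v_b\rangle$ ``with its group-algebra Hopf structure'', but $L\langle v_b\rangle$ is not a Hopf subalgebra of $H_L$: by \eqref{comultA}, $\Delta(v_b)=u_b\otimes v_b+v_b\otimes u_b^{-1}$, so $v_b$ is not group-like and Proposition \ref{subsquo} (iii) gives you nothing here. The intersection is therefore only an algebra order of $LC_p$, and Corollary \ref{pisigmae} (2) genuinely needs the Hopf order hypothesis (its proof runs through the classification Theorem \ref{descrip}); for mere algebra orders the conclusion fails, e.g. $S[\sigma]+S\cdot\frac{1}{\sqrt{p}}\sum_i\sigma^i$ is an algebra order of $LC_p$ containing $\frac{1}{\sqrt{p}}\sum_i\sigma^i$ but not $\frac{1}{\pi}(\sigma-1)$. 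So as written the inclusion $J(v_b-e_1)\subseteq X$ is not established.

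The repair is the paper's device: work in the genuine Hopf subalgebra $H_b$ generated by $u_b$ and $v_b$ and translate by a character. Over $L$ you already have $\tilde{x}_b=\frac{1}{\pi}(u_b-e_0)\in X\otimes_R S$ (conjugate $\frac{1}{\pi}(u_a-e_0)$ by $g$). The algebra map $\rho:H_b\rightarrow L$, $u_b\mapsto 0$, $v_b\mapsto 1$, is a character, hence lies in the dual of the Hopf order $(X\otimes_R S)\cap(H_b\otimes_K L)$ by Proposition \ref{character}; since $\Delta(u_b-e_0)=u_b\otimes u_b+v_b\otimes v_b^{-1}-e_0\otimes e_0-e_1\otimes e_1$, applying $(\id\otimes\rho)\Delta$ to $\tilde{x}_b$ yields $\frac{1}{\pi}(v_b-e_1)\in X\otimes_R S$, and your faithfully flat descent then gives $J(v_b-e_1)\subseteq X$. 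With this one step replaced, your argument goes through and is essentially the paper's proof; the remaining differences (the paper carries out the whole minimality argument over $L=K(\pi)$, phrases the maximal order as $Z^{\star}$ with $Z$ the minimal order of $H_L^{*}$, computes $\varepsilon(\Lambda_Y)=(2p)$ directly, and descends to $R$ only at the very end) are presentational.
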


\pf Let $\pi \in \Co$ be such that $\pi^2=\zeta-1$ and set $L=K(\pi)$.
We will first prove that $H_L$ admits a unique Hopf order over $S=\Oint_L$ and derive the uniqueness for $H$ arguing as we did in Proposition \ref{basis}.
Write $I=(\pi)$. Then $I^{2(p-1)}=(p)$. Let $J \subset L$ be the inverse of $I$, which is generated by $\frac{1}{\pi}$.
We have seen in the precedent proof that the order $Y$ (over $S$) is generated as an algebra by $e_0,e_1,g$ and the elements
$$\tilde{x}_a:=\op (u_a-e_0), \quad \tilde{x}_b:=\op(u_b-e_0), \quad \tilde{y}_a:=\op(v_a-e_1), \quad \tilde{y}_b:=\op(v_b-e_1).$$

Let $X$ be any Hopf order of $H_L$. By Lemma \ref{uava} and Corollary \ref{pisigmae}(ii), $X$ must contain the element $\frac{1}{\pi}(u_a+v_a-1)$.
By Proposition \ref{basis}, $X$ contains all basis elements of $H_L$. Using multiplication by $e_0$ and $e_1$,
conjugation by $g$ and translation by the character $\rho:H_b \rightarrow K, u_b \mapsto 0, v_b \mapsto 1$,
we see that $X$ must contain $\tilde{x}_a,\tilde{y}_a,\tilde{x}_b,$ and $\tilde{y}_b$. Then $Y \subseteq X$ and thus $Y$ is a minimal Hopf order. \par \medskip

We know that $H_L$ is self-dual. Then $H_L^*$ has also a minimal order, which we denote by $Z$. This implies that $Z^{\star}$ is a maximal Hopf order of $H_L$.
Thus any Hopf order of $H_L$ lies between $Y$ and $Z^{\star}$. {We will prove that} $Y=Z^{\star}$. The $R$-submodule $\Lambda_Y$ of left integrals in $Y$ is spanned by
$\frac{1}{p}(1+g)\sum_{i,j} u_a^iu_b^j$. Then $\varepsilon(\Lambda_Y)=(2p)$. \vspace{-1pt} Using self-duality of $H_L$, we also have $\varepsilon(\Lambda_Z)=(2p)$.
Since $(\dim H)=(4p^2)$, by Proposition \ref{Larson1}, $\varepsilon(\Lambda_{Z^{\star}})=(2p)$. Proposition \ref{Larson2} yields $Y=Z^{\star}$. \par \medskip

Finally, let $X,X'$ be two Hopf orders of $H$. The Hopf orders $X \otimes_R S$ and $X' \otimes_R S$ of $H_L$ must be equal. Then $X \otimes_R S = (X+X') \otimes_R S = X' \otimes_R S.$
As $S$ is faithfully flat as an $R$-module, we obtain $X=X+X'=X'$ and we are done.
\epf
\vspace{0mm}

\begin{remark}
The precedent result shows that the behavior of orders for semisimple Hopf algebras can be quite different to that of group algebras.
When we take larger number fields, the number of Hopf orders of the group algebra on $C_p$ tends to infinity whereas the number of orders of $H$ is constantly $1$.
\end{remark}
\vspace{-4mm}

\begin{remark}
In \cite[Theorem 1.8]{Me} the second author proved that every semisimple Hopf algebra over a number field only admits finitely many Hopf orders over its ring of integers.
\end{remark}

\subsection{Main result}
We are finally in a position to prove our main result:

\begin{theorem}\label{main}
Let $p$ be an odd prime number and $K$ a number field containing a primitive $p$-th root of unity. Nikshych's Hopf algebra admits a Hopf order over $\Oint_K$,
which must be unique, if and only if there is an ideal $I$ of $\Oint_K$ such that $I^{2(p-1)} = (p)$. In particular, $K$ can not be neither a cyclotomic field nor an abelian extension of $\Q$ if a Hopf orders exist.
\end{theorem}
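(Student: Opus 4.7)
The plan is to assemble the theorem from the three principal results of this section and to add a short ramification-theoretic argument for the closing assertion. The necessity of the existence of an ideal $I$ with $I^{2(p-1)}=(p)$ is exactly Proposition \ref{necessary}: any Hopf order $X$ of $H$ over $\Oint_K$ intersects the Hopf subalgebra $K\langle h\rangle\simeq KC_p$ (with $h=u_a+v_a$) in a Hopf order whose integral submodule equals $\Oint_K\cdot\frac{1}{\sqrt{p}}\sum_i h^i$ by Lemma \ref{intZ}, and Corollary \ref{pisigmae}(1) then supplies the ideal. Sufficiency is Theorem \ref{Xorder}: starting from such an $I$ and its inverse $J=I^{-1}$, one builds the explicit Hopf order $Y$ generated over $\Oint_K$ by $e_0,e_1,g$ and the elements $J(u_a-e_0),J(u_b-e_0),J(v_a-e_1),J(v_b-e_1)$. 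Uniqueness is Proposition \ref{unique}, which exploits self-duality of $H$ together with Larson's Proposition \ref{Larson2} via a comparison of integrals to match the minimal order $Y$ with the maximal order $Z^{\star}$.

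For the ``in particular'' clause, the plan is to use ramification of $p$. Let $e$ denote the ramification index of any prime $\p$ of $\Oint_K$ above $p$; since an abelian $K/\Q$ is Galois, $e$ is independent of $\p$. The existence of $I$ with $I^{2(p-1)}=(p)$ forces $I$ to be supported on the primes above $p$, so writing $I=\prod_i \p_i^{a_i}$ and $(p)=\prod_i \p_i^{e}$ yields $e=2(p-1)a_i$, hence $2(p-1)\mid e$. On the other hand, by Kronecker-Weber there exists $n$ with $K\subseteq\Q(\zeta_n)$; writing $n=p^k m$ with $\gcd(p,m)=1$, the ramification of $p$ in $\Q(\zeta_n)$ equals $\varphi(p^k)=(p-1)p^{k-1}$ (and is $1$ if $p\nmid n$), so the multiplicativity of ramification indices in the tower $\Q\subseteq K\subseteq\Q(\zeta_n)$ gives $e\mid (p-1)p^{k-1}$. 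Since $p$ is odd, $v_2((p-1)p^{k-1})=v_2(p-1)$, whereas $v_2(2(p-1))=v_2(p-1)+1$, contradicting $2(p-1)\mid e$. Thus no abelian extension of $\Q$ (in particular, no cyclotomic field) admits such an ideal.

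The main obstacle is not the assembly itself but the input from Proposition \ref{unique}: there the self-duality of $H$ has to interact cleanly with Larson's integral-comparison to identify the minimal order produced by Theorem \ref{Xorder} with the dual of a minimal order of $H^*$. Once that step is in hand, the present theorem follows directly as outlined above, with the ramification argument being the only genuinely new ingredient.
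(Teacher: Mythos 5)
Your assembly of the main equivalence is exactly the paper's: necessity is Proposition \ref{necessary} (via Lemma \ref{intZ} and Corollary \ref{pisigmae}(1) applied to $X\cap K\langle h\rangle$), sufficiency is the explicit order $Y$ of Theorem \ref{Xorder}, and uniqueness is Proposition \ref{unique} through self-duality and Larson's Proposition \ref{Larson2}; all of this is correct and matches the paper's proof of Theorem \ref{main} verbatim in structure. The only place you diverge is the ``in particular'' clause. The paper argues directly for $K=\Q(\eta)$ cyclotomic: it cites the exact ramification index $e=(p-1)p^{n-1}$ of a prime above $p$ in $\Oint_K$ (Narkiewicz) and observes that $I^{2(p-1)}=(p)$ would force $e=2l(p-1)$, so $2\mid p^{n-1}$, a parity contradiction. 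You instead embed an arbitrary abelian $K$ into $\Q(\zeta_n)$ via Kronecker--Weber, use multiplicativity of ramification indices in the tower to get $e\mid(p-1)p^{k-1}$ while $2(p-1)\mid e$, and conclude by comparing $2$-adic valuations. Both arguments rest on the same parity obstruction, but yours proves the stronger statement (no abelian extension of $\Q$ works) in one stroke, whereas the paper proves only the cyclotomic case in the theorem and deduces the abelian case afterwards as a remark by invoking Kronecker--Weber; your version is correct, and the small caveat you already handle (if $p\nmid n$ then $e=1$, which is incompatible with $2(p-1)\mid e$) is the only point where the divisibility argument needs care.
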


\pf The necessary condition was established in Proposition \ref{necessary}. The sufficient condition and uniqueness were proved in Theorem \ref{Xorder} and Proposition \ref{unique}. \par \smallskip

We prove that $K$ can not be a cyclotomic field if $H$ admits a Hopf order over $\Oint_K$. Let $I \subset \Oint_K$ be the given ideal such that $I^{2(p-1)} = (p)$. Suppose that $K$ is a cyclotomic field, say $K=\Q(\eta)$ with $\eta$ a primitive $m$-th root of unity. Since $(p)$ ramifies in $\Oint_K$, by \cite[Proposition 2.3]{W}, $p$ is a prime factor of $m$. Call $n$ the exponent with which $p$ occurs. By \cite[Theorem 4.40]{Nw},
there is a prime ideal $\mathfrak{P}$ of $\Oint_K$ appearing in the factorization of $(p)$ with exponent $e:=(p-1)p^{n-1}$.
The exponent of $\mathfrak{P}$ in the factorization of $I^{2(p-1)}$ will be $2l(p-1)$ for some $l \in \Na$. Then $p$ should be divisible by $2$, a contradiction. \par \smallskip

That $K$ can not be an abelian extension of $\Q$ in this case follows from the Kronecker-Weber Theorem.
\epf

\section{On orders of forms}
\setcounter{equation}{0}

Let $L/K$ be a Galois extension of fields with Galois group $\Gamma$. We have seen before that it could happen that Nikshych's Hopf algebra $H$ over $K$ does not admit an order over any cyclotomic ring of integers, but could a $L/K$-form of $H$ do? Namely, could there be another Hopf algebra $H'$ over $K$ such that $H' \otimes_K L \simeq H \otimes_K L$ and $H'$ admits an order over some cyclotomic ring of integers? We will show in this last section that the answer to this question is affirmative. \par \smallskip

We first recall from \cite[Proposition 1.1]{CDL} and \cite[Proposition 1]{RTW} some basics about Galois descent in the Hopf algebra setting. Put $H_L=H \otimes_K L$. Given $\gamma \in \Gamma$, a {\it Hopf $\gamma$-automorphism} of $H_L$ is a $K$-linear automorphism $f:H_L \rightarrow H_L$ which satisfies:
\begin{enumerate}
\item[(1)] $f$ is $\gamma$-semilinear, i.e., $f(\alpha h)=\gamma(\alpha)f(h)$ for all $\alpha \in L, h \in H_L$.
\item[(2)] $f$ is compatible with the multiplication, comultiplication, and antipode.
\item[(3)] $f(1_{H_L})=1_{H_L}$.
\item[(4)] $\varepsilon f = \gamma \varepsilon.$
\end{enumerate}

According to Galois descent, $L/K$-forms of $H$ correspond to group homomorphisms $\Phi:\Gamma \rightarrow Aut_K(H_L), \gamma \mapsto \Phi_{\gamma}$ such that $\Phi_{\gamma}$ is a Hopf $\gamma$-automorphism for all $\gamma \in \Gamma$.
For such a $\Phi$ the set of invariants $(H_L)^{\Gamma}$ is a Hopf algebra over $K$ and the natural map $(H_L)^{\Gamma} \otimes_K L \rightarrow H_L$ is an isomorphism of Hopf algebras.\par \smallskip

Our goal is to prove the following:

\begin{theorem}\label{descents}
Let $\zeta_n \in \Co$ be a primitive $n$-th root of unity, with $n$ divisible by $p$. Consider Nikshych's Hopf algebra $H$ as defined over $\Q(\zeta_n)$. Let $w \in \Z[\zeta_n]$ and $t \in \Co$ be such that $w$ is invertible and $t^2=w({\zeta_p}-1)$. Assume that\vspace{-1pt} there is $d \in \Z[\zeta_n]$ such that $\frac{1}{2}(d+t)\in \Ow_{\Q(\zeta_n,t)}$. Then, $H$ admits a $\Q(\zeta_n,t)/\Q(\zeta_n)$-form $H'$ which in turn admits an order over $\Z[\zeta_n]$.
\end{theorem}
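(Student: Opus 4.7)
My plan is to perform Galois descent along $L/\Q(\zeta_n)$, where $L:=\Q(\zeta_n,t)$. Write $\Gamma=\mathrm{Gal}(L/\Q(\zeta_n))=\langle\gamma\rangle$ with $\gamma(t)=-t$. Since $w\in\Z[\zeta_n]$ is a unit and $(\zeta_p-1)^{p-1}=p\cdot\xi$ with $\xi$ a unit (see \cite[Lemma 1.4]{W}), the ideal $I=(t)\subset\Oint_L$ satisfies
$$I^{2(p-1)}=(t^{2(p-1)})=(w^{p-1}(\zeta_p-1)^{p-1})=(p),$$
so Theorem \ref{main} applied over $L$ produces a unique Hopf order $Y$ of $H_L:=H\otimes_{\Q(\zeta_n)} L$ over $\Oint_L$, described explicitly as in Theorem \ref{Xorder} with $J=(1/t)$.

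Next I would pick a Hopf automorphism $\sigma$ of $H$ of order two (from Subsection \ref{Hopfaut}; a convenient choice is $\sigma=\bar\varphi$, sending $u_x\mapsto u_x^{-1}$, $v_x\mapsto v_x^{-1}$ for $x\in\{a,b\}$ and fixing $g$) and set $\Phi_\gamma(h\otimes\lambda):=\sigma(h)\otimes\gamma(\lambda)$. A direct check shows that $\Phi_\gamma$ is a Hopf $\gamma$-automorphism of $H_L$ with $\Phi_\gamma^2=\mathrm{id}$, so by Galois descent its fixed points $H':=(H_L)^{\Phi_\gamma}$ form an $L/\Q(\zeta_n)$-form of $H$. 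Because $\Phi_\gamma(Y)$ is again a Hopf order of $H_L$ over $\Oint_L$, the uniqueness part of Theorem \ref{main} forces $\Phi_\gamma(Y)=Y$; alternatively one can verify stability directly on the generators of $Y$, using relations like $u_a^{-1}-e_0=-u_a^{-1}(u_a-e_0)$ combined with $\gamma(1/t)=-1/t$.

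The last and crucial step is to show that $Y':=Y^{\Phi_\gamma}$ is a Hopf order of $H'$ over $\Z[\zeta_n]$. Here the assumption $\omega:=\tfrac12(d+t)\in\Oint_L$ comes in: it realizes $\Oint_L$ as the free $\Z[\zeta_n]$-module $\Z[\zeta_n]\oplus\Z[\zeta_n]\,\omega$, with $\omega+\gamma(\omega)=d$ and $\omega-\gamma(\omega)=t$, which is enough for standard Galois descent for modules to yield $Y'\otimes_{\Z[\zeta_n]}\Oint_L\cong Y$. This implies $Y'\otimes_{\Z[\zeta_n]}\Q(\zeta_n)\cong H'$ and that $Y'$ is a finitely generated projective $\Z[\zeta_n]$-algebra closed under the Hopf operations inherited from $Y$. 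Since Theorem \ref{main} forbids $H$ itself from admitting an order over $\Z[\zeta_n]$, the form $H'$ produced by this construction is automatically non-trivial.

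The main obstacle will be that descent step: the extension $L/\Q(\zeta_n)$ is ramified at the primes dividing $\zeta_p-1$, so without the hypothesis on $\omega$ there is no reason for $\Oint_L$ to be free as a $\Z[\zeta_n]$-module, and descent can fail. The precise role of $\omega$ is to supply a $\Z[\zeta_n]$-basis of $\Oint_L$ with controlled $\gamma$-action (one fixed vector and the single anti-invariant shift $t=2\omega-d$), which lets one track $\Phi_\gamma$ explicitly on the $\Oint_L$-basis of $Y$ from Theorem \ref{Xorder} and confirm that the invariants assemble into a full $\Z[\zeta_n]$-order of $H'$.
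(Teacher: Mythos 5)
Your setup (the form via the involution $\sigma:u_\theta\mapsto u_\theta^{-1},\,v_\theta\mapsto v_\theta^{-1},\,g\mapsto g$, the unique order $Y$ of $H_L$ over $\Ow_L$ with $J=(1/t)$, and the goal of showing that $Y^{\Gamma}$ descends $Y$) matches the paper exactly. The gap is in the crucial last step. From the hypothesis $\omega:=\tfrac12(d+t)\in\Ow_L$ you cannot conclude that $\Ow_L=\Z[\zeta_n]\oplus\Z[\zeta_n]\,\omega$; the hypothesis only says $\omega$ is integral, not that $\{1,\omega\}$ is an integral basis (in general $\Z[\zeta_n][\omega]$ is merely an order in $\Ow_L$, possibly proper, and $\Ow_L$ need not even be free over $\Z[\zeta_n]$). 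More seriously, even granting such a basis, "standard Galois descent for modules" is exactly what fails here: $L/\Q(\zeta_n)$ is ramified, so for a module with $\gamma$-semilinear involution the natural map $M^{\Gamma}\otimes_{\Z[\zeta_n]}\Ow_L\to M$ need not be surjective. A minimal example is $M=\Z[\sqrt2]$ over $\Z$ with the semilinear involution $m\mapsto-\gamma(m)$: the invariants are $\Z\sqrt2$ and they generate only the ideal $(\sqrt2)$, not all of $M$, even though $\{1,\sqrt2\}$ is a free basis. So surjectivity of $\rho:Y^{\Gamma}\otimes_{\Z[\zeta_n]}\Ow_L\to Y$ has to be proved for this particular $Y$ and this particular action, and your proposal never does that.

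This is precisely where the paper uses the hypothesis, and in a different way than you do: it produces explicit invariant elements hitting the generators. With $q:=\tfrac{1}{t^2}(2e_0-u_a-u_a^{-1})=-\tilde x_a^2u_a^{-1}\in Y^{\Gamma}$ one computes $\sigma'(\tilde x_a)=\tilde x_a+tq$ (since $\gamma(t)=-t$), so $z:=\tilde x_a+\tfrac12(d+t)q$ lies in $Y^{\Gamma}$, and then $\tilde x_a=z-\tfrac12(d+t)q$ lies in $\Ima\rho$ because $\tfrac12(d+t)\in\Ow_L$ and $q$ is invariant; the same trick handles $\tilde x_b,\tilde y_a,\tilde y_b$, while $e_0,e_1,g$ are invariant, and since $\Ima\rho$ is a subring of $Y$ this forces $\Ima\rho=Y$ (injectivity being automatic). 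To repair your argument you would need to replace the appeal to generic descent by this generator-by-generator computation (or an equivalent one); your side remarks, e.g.\ that $\Phi_\gamma(Y)=Y$ by uniqueness of the order and that $H'\not\cong H$ because $H$ has no order over $\Z[\zeta_n]$, are correct but do not fill this gap.
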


\begin{proof}
Set $L=\Q(\zeta_n,t)$. We will construct $H'$ and show that the unique order $Y$ of $H_L$ descents to an order of $H'$ over $\Z[\zeta_n]$. The Galois group $\Gamma$ of $L/\Q(\zeta_n)$ is isomorphic to $\Z/2\Z$. We denote the generator by $\gamma$. Bear in mind the Hopf automorphism $\sigma$ of $H$ of order two given by
$$\sigma(u_{\theta})=u_{\theta}^{-1}, \, \sigma(v_{\theta})=v_{\theta}^{-1} \textrm{ for } {\theta}=a,b, \textrm{ and } \sigma(g)=g.$$
We can define a Hopf $\gamma$-automorphism $\sigma'$ of $H_L$ by $\sigma'(h \otimes \alpha)=\sigma(h) \otimes \gamma(\alpha)$ for all $h \in H, \alpha \in L$. Let $\Phi:\Gamma\rightarrow Aut_K(H_L)$ be the group morphism mapping $\gamma$ to $\sigma'$. Consider the form $H'$ of $H$ given by $H'=(H_L)^{\Gamma}$. \par \smallskip

We claim that the order $Y$ of $H_L$ descents to an order $Y':=Y^{\Gamma}$ of $H'$ over $\Z[\zeta_n]$.
It is enough to check that the natural map $\rho:Y^{\Gamma} \ot_{\Z[\zeta_n]} \Ow_L \rightarrow Y$ is an isomorphism (this will ensure us that $Y^{\Gamma}$ is really a Hopf order). Since $\rho$ is injective, it suffices to check the surjectivity. We have seen in Proposition \ref{unique} that $Y$ is generated over $\Ow_L$ by $e_0,e_1,g,$ and
$$\tilde{x}_a:=\frac{1}{t}(u_a-e_0),\quad \tilde{x}_b:=\frac{1}{t}(u_b-e_0), \quad \tilde{y}_a:=\frac{1}{t}(v_a-e_1), \quad \tilde{y}_b:=\frac{1}{t}(v_b-e_1).$$
Clearly, $e_0,e_1,g \in {\Ima \rho}$ as they are invariants. We will show that ${\Ima \rho}$ contains the rest of the generators. Since ${\Ima \rho}$ is a subring of $Y$, this will give ${\Ima \rho}=Y$. Let us show that ${\tilde{x}_a} \in {\Ima \rho}$. The proof for the other generators is similar.
The element  $q:=\frac{1}{t^2}(2e_0-u_a-u_a^{-1})= -\tilde{x}_a^2u_a^{-1}$ belongs to $Y^{\Gamma}$. Since $\gamma(t)=-t$,
a direct calculation reveals that $\sigma'(\tilde{x}_a)= \tilde{x}_a+tq$. Set $z=\tilde{x}_a+\frac{1}{2}(d+t)q$.
One can easily check that $z\in Y^{\Gamma}$, and therefore $z\in {\Ima \rho}$. Finally, $\tilde{x}_a=z-\frac{1}{2}(d+t)q$, and $\frac{1}{2}(d+t)q \in {\Ima \rho}$, so $\tilde{x}_a\in {\Ima \rho}$ as well, as desired.
\end{proof}
\medskip

With the previous theorem in hand, we will describe an example in which an order of a form does exist.

\begin{example}
Consider the case $p=7$ and $n=28$. Let $\zeta:=\zeta_{28}$ be a primitive $28$-th root of unity. A computation done by Dror Speiser with the computer algebra system \href{http://magma.maths.usyd.edu.au/magma/}{MAGMA} showed that if ${w}$ is the inverse of the element
$$\begin{array}{l}
\phantom{+} 21747826028152\zeta^{11} - 25061812676688\zeta^{10} + 5371269408312\zeta^{9} -
    2754700868376\zeta^8 \vspace{5pt} \\
+ 21747826028152\zeta^7 - 22307111808312\zeta^6 + 4963799311635\zeta^4 + 12069132874072\zeta^3 \vspace{5pt} \\
- 11153555904156\zeta^2 - 12069132874072\zeta + 17343312496677
\end{array}$$
and $d=1$, then the condition of the theorem holds. {We take $t$ such that \linebreak $t^2=w(1-\zeta^4)$.} We thus have an order over $\Z[\zeta]$ of a form of $H_7$. \par \smallskip

Then $H_7$, as defined over the complex numbers, admits an order over a cyclotomic ring of integers.
\end{example}

The following questions remain open:

\begin{questions}
Does there exist a value of $p$ for which Nikshych's Hopf algebra $H_p$, as defined over the complex numbers,  does not admit an order over any cyclotomic ring of integers? More generally, does there exist a complex semisimple Hopf algebra which admits an order over a number ring but not over any cyclotomic ring of integers?
\end{questions}
\bigskip

\subsection*{Acknowledgements}
The first author is supported by grant MTM2014-54439-P from MICINN and FEDER and by the research group FQM0211 from Junta de Andaluc\'{\i}a. The second author was supported by the Danish National Research Foundation (DNRF) through the Centre for Symmetry and Deformation. The authors are grateful to Dror Speiser for doing the previous computer calculation and to Bjorn Poonen for a conversation about the number theoretical condition in Theorem \ref{descents}. The authors are finally indebted to the referee for his/her comments and suggestions, which helped to improve substantially the presentation of the results. \vspace{0.3cm}


\begin{thebibliography}{WWA}
\bibitem{CM} J. Cuadra and E. Meir, {\it On the existence of orders in semisimple Hopf algebras}. Trans. Amer. Math. Soc. {\bf 368} (2016), 2547-2562.
\bibitem{CDL} S. Caenepeel, S. Dascalescu, and L. Le Bruyn, {\it Forms of pointed Hopf algebras}. Manuscripta Math. {\bf 100} (1999), 35-53.
\bibitem{CR} C. W. Curtis and I. Reiner, Methods of representation theory. With applications to finite groups and orders. Vol. I. {\it Pure and Applied Mathematics}. John Wiley \& Sons, Inc., New York, 1981.
\bibitem{G} C. Greither, {\it Extensions of finite group schemes, and Hopf Galois theory over a complete discrete valuation ring.} Math. Z. {\bf 210} (1992), 37-67.
\bibitem{L1} R. G. Larson, {\it Hopf algebra orders determined by group valuations}. J. Algebra {\bf 38} (1976), 414-452.
\bibitem{L2} R. G. Larson, {\it Orders in Hopf algebras}. J. Algebra {\bf 22} (1972), 201-210.
\bibitem{M1} A. Masuoka, {\it Some further classification results on semisimple Hopf algebras}. Comm. Algebra {\bf 24} (1996), 307-329.
\bibitem{Me} E. Meir, {\it Semisimple Hopf algebras via geometric invariant theory}. Adv. Math. {\bf 311} (2017), 61-90.
\bibitem{Mo} S. Montgomery, Hopf algebras and their actions on rings. {\it CBMS Regional Conference Series in Mathematics} 82. American Mathematical Society, Providence, RI, 1993.
\bibitem{Nw} W. Narkiewicz, Elementary and analytic theory of algebraic numbers (Third edition). {\it Springer Monographs in Mathematics}. Springer-Verlag, Berlin, 2004.
\bibitem{N1} D. Nikshych, {\it Non group-theoretical semisimple Hopf algebras from group actions on fusion categories}. Selecta Math. (N.S.) {\bf 14} (2008), 145-161.
\bibitem{Ra} D. E. Radford, Hopf algebras. {\it Series on Knots and Everything} 49. World Scientific Publishing Co. Pte. Ltd., Hackensack, NJ, 2012.
\bibitem{RTW} D. E. Radford, E. J. Taft, and R. L. Wilson, Forms of certain Hopf algebras. Manuscripta Math. {\bf 17} (1975), 333-338.
\bibitem{TO} J. Tate and F. Oort, {\it Group schemes of prime order.} Ann. Sci. \'Ecole Norm. Sup. (4) {\bf 3} (1970), 1-21.
\bibitem{U} R. Underwood, {\it The valuative condition and $R$-Hopf algebra orders in $KC_{p^3}$.} Amer. J. Math. {\bf 118} (1996), 701-743.
\bibitem{W} L. C. Washington, Introduction to cyclotomic fields. Second edition. {\it Graduate Texts in Mathematics} 83. Springer-Verlag, New York, 1997.
\end{thebibliography}
\end{document}